\newcommand{\N}{\mathbb{N}}
\newcommand{\Z}{\mathbb{Z}}
\newcommand{\R}{\mathbb{R}}
\newcommand{\dH}{\, \mathrm{d} \mathcal{H}^{d-1} \,}
\newcommand{\dd}{\, \mathrm{d} \,}
\newcommand{\dx}{\, \mathrm{dx}\,}
\newcommand{\dt}{\, \mathrm{dt}\,}
\newcommand{\dr}{\, \mathrm{dr}\,}
\newcommand{\pd}{\partial}
\newcommand{\eps}{\varepsilon}
\newcommand{\der}{ \mathrm{D} }
\newcommand{\pdnu}{ \pd_{\bm{n}}}
\newcommand{\Laplace}{\Delta}
\newcommand{\abs}[1]{\left| #1 \right|}
\newcommand{\norm}[1]{\| #1 \|}
\newcommand{\bignorm}[1]{\left\| #1 \right\|}
\newcommand{\inner}[2]{\langle #1 , #2 \rangle}
\newcommand{\mean}[1]{\overline{#1}}
\renewcommand{\div}{\, \mathrm{div}\,}
\newcommand{\dist}{\text{dist}}
\newtheorem{thm}{Theorem}[section]
\newtheorem{lemma}[thm]{Lemma}
\newtheorem{remark}{Remark}[section]
\newtheorem{defn}{Definition}[section]
\newtheorem{assump}{Assumption}[section]
\numberwithin{equation}{section}
\begin{document}

\title{Global weak solutions and asymptotic limits of a Cahn--Hilliard--Darcy system modelling tumour growth}

\author{Harald Garcke \footnotemark[1] \and Kei Fong Lam \footnotemark[1]}

\date{\today}

\maketitle

\renewcommand{\thefootnote}{\fnsymbol{footnote}}
\footnotetext[1]{Fakult\"at f\"ur Mathematik, Universit\"at Regensburg, 93040 Regensburg, Germany
({\tt \{Harald.Garcke, Kei-Fong.Lam\}@mathematik.uni-regensburg.de}).}

\begin{abstract}
We study the existence of weak solutions to a Cahn--Hilliard--Darcy system coupled with a convection-reaction-diffusion equation through the fluxes, through the source terms and in Darcy's law.  The system of equations arises from a mixture model for tumour growth accounting for transport mechanisms such as chemotaxis and active transport.  We prove, via a Galerkin approximation, the existence of global weak solutions in two and three dimensions, along with new regularity results for the velocity field and for the pressure.  Due to the coupling with the Darcy system, the time derivatives have lower regularity compared to systems without Darcy flow, but in the two dimensional case we employ a new regularity result for the velocity to obtain better integrability and temporal regularity for the time derivatives.  Then, we deduce the global existence of weak solutions for two variants of the model; one where the velocity is zero and another where the chemotaxis and active transport mechanisms are absent.
\end{abstract}

\noindent \textbf{Key words. } Cahn--Hilliard--Darcy system; phase field model; convection-reaction-diffusion equation; tumour growth; chemotaxis; weak solutions; asymptotic analysis. \\

\noindent \textbf{AMS subject classification. } 35D30, 35Q35, 35Q92, 35K57, 35B40, 76S05, 92C17, 92B05.

\maketitle

\section{Introduction}
In recent years there has been an increased focus on the mathematical modelling and analysis of tumour growth.  Many new models have been proposed and numerical simulations have been carried out to provide new and important insights on cancer research, see for instance \cite{book:Cristini} and \cite[Chap. 3]{book:Fasano}.  In this work we analyse a diffuse interface model proposed in \cite{article:GarckeLamSitkaStyles}, which models a mixture of tumour cells and healthy cells in the presence of an unspecified chemical species acting as a nutrient.  More precisely, for a bounded domain $\Omega \subset \R^{d}$ where the cells reside and $T > 0$, we consider the following set of equations,
\begin{subequations}\label{CHDN}
\begin{alignat}{3}
\div \bm{v} & = \Gamma_{\bm{v}} && \text{ in } \Omega \times (0,T) =: Q, \label{CHDN:div} \\
\bm{v} & = - K (\nabla p  - (\mu + \chi \sigma)\nabla \varphi) &&  \text{ in } Q, \label{CHDN:Darcy} \\
\pd_{t}\varphi  + \div (\varphi \bm{v}) & = \div (m(\varphi) \nabla \mu) + \Gamma_{\varphi} &&  \text{ in } Q, \\
\mu & = A \Psi'(\varphi) - B \Laplace \varphi - \chi \sigma &&  \text{ in } Q, \\
\pd_{t}\sigma + \div (\sigma \bm{v}) & = \div (n(\varphi) (D \nabla \sigma - \chi \nabla \varphi)) - \mathcal{S} &&  \text{ in } Q.
\end{alignat}
\end{subequations}
Here, $\bm{v}$ denotes the volume-averaged velocity of the mixture, $p$ denotes the pressure, $\sigma$ denotes the concentration of the nutrient, $\varphi \in [-1,1]$ denotes the difference in volume fractions, with $\{\varphi = 1\}$ representing the unmixed tumour tissue, and $\{\varphi = -1\}$ representing the surrounding healthy tissue, and $\mu$ denotes the chemical potential for $\varphi$.

The model treats the tumour and healthy cells as inertia-less fluids, leading to the appearance of a Darcy-type subsystem with a source term $\Gamma_{\bm{v}}$.  The order parameter $\varphi$ satisfies a convective Cahn--Hilliard type equation with additional source term $\Gamma_{\varphi}$, and similarly, the nutrient concentration $\sigma$ satisfies a convection-reaction-diffusion equation with a non-standard flux and a source term $\mathcal{S}$.  We refer the reader to \cite[\S 2]{article:GarckeLamSitkaStyles} for the derivation from thermodynamic principles, and to \cite[\S 2.5]{article:GarckeLamSitkaStyles} for a discussion regarding the choices for the source terms $\Gamma_{\varphi}, \Gamma_{\bm{v}}$ and $\mathcal{S}$.

The positive constants $K$ and $D$ denote the permeability of the mixture and the diffusivity of the nutrient, $m(\varphi)$ and $n(\varphi)$ are positive mobilities for $\varphi$ and $\sigma$, respectively.  The parameter $\chi \geq 0$ regulates the chemotaxis effect (see \cite{article:GarckeLamSitkaStyles} for more details), $\Psi(\cdot)$ is a potential with two equal minima at $\pm 1$, $A$ and $B$ denote two positive constants related to the thickness of the diffuse interface and the surface tension.

We supplement the above with the following boundary and initial conditions
\begin{subequations}\label{CHDNbdy}
\begin{alignat}{3}
\pdnu \varphi = \pdnu \mu & = 0 && \text{ on } \pd \Omega \times (0,T) =: \Sigma, \\
\bm{v} \cdot \bm{n} = \pdnu p & = 0 && \text{ on } \Sigma, \label{CHDNbdy:velopress} \\
n(\varphi) D \pdnu \sigma & = b ( \sigma_{\infty} - \sigma) && \text{ on } \Sigma, \\
\varphi(0) = \varphi_{0}, \quad \sigma(0) & = \sigma_{0} && \text{ on } \Omega.
\end{alignat}
\end{subequations}
Here $\varphi_{0}$, $\sigma_{0}$ and $\sigma_{\infty}$ are given functions and $b > 0$ is a constant.  We denote $\pdnu f := \nabla f \cdot \bm{n}$ as the normal derivative of $f$ at the boundary $\pd \Omega$, where $\bm{n}$ is the outer unit normal.  Associated to \eqref{CHDN} is the free energy density $N(\varphi, \sigma)$ for the nutrient, which is defined as
\begin{align}\label{choice:Nvarphisigma}
N(\varphi, \sigma) := \frac{D}{2} \abs{\sigma}^{2} + \chi \sigma (1- \varphi).
\end{align}
Note that
\begin{align*}
N_{,\sigma} := \frac{\pd N}{\pd \sigma} = D \sigma + \chi (1-\varphi), \quad N_{,\varphi} := \frac{\pd N}{\pd \varphi} = -\chi \sigma,
\end{align*}
so that \eqref{CHDN} may also be written as
\begin{subequations}\label{CHDN:version:generalN}
\begin{alignat}{3}
\div \bm{v} & = \Gamma_{\bm{v}}, \\
\bm{v} & = - K (\nabla p  - \mu \nabla \varphi + N_{,\varphi} \nabla \varphi), \\
\pd_{t}\varphi  + \div ( \varphi \bm{v}) & = \div (m(\varphi) \nabla \mu) + \Gamma_{\varphi}, \\
\mu & = A \Psi'(\varphi) - B \Laplace \varphi + N_{,\varphi}, \\
\pd_{t}\sigma + \div( \sigma \bm{v}) & = \div (n(\varphi) \nabla N_{,\sigma}) - \mathcal{S},
\end{alignat}
\end{subequations}
which is the general phase field model proposed in \cite{article:GarckeLamSitkaStyles}.  In this work we do not aim to analyse such a model with a general free energy density $N(\varphi, \sigma)$, but we will focus solely on the choice \eqref{choice:Nvarphisigma} and the corresponding model \eqref{CHDN}-\eqref{CHDNbdy}.

Our goal in this work is to prove the existence of weak solutions (see Definition \ref{defn:weaksoln} below) of \eqref{CHDN}-\eqref{CHDNbdy} in two and three dimensions.  Moreover, one might expect that by setting $\Gamma_{\bm{v}} = 0$ and then sending $b \to 0$ and $K \to 0$, the weak solutions to \eqref{CHDN}-\eqref{CHDNbdy} will converge (in some appropriate sense) to the weak solutions of
\begin{subequations}\label{KzeroModel}
\begin{alignat}{3}
\pd_{t}\varphi  &= \div (m(\varphi) \nabla \mu) + \Gamma_{\varphi} && \text{ in } Q, \\
\mu & = A\Psi'(\varphi) - B \Laplace \varphi - \chi \sigma && \text{ in } Q, \\
\pd_{t}\sigma  & = \div (n(\varphi) (D \nabla \sigma - \chi \nabla \varphi)) - \mathcal{S} && \text{ in } Q, \\
0 & = \pdnu \varphi = \pdnu \mu = \pdnu \sigma  && \text{ on } \Sigma.
\end{alignat}
\end{subequations}
We denote \eqref{KzeroModel} as the limit system of vanishing permeability, where the effects of the volume-averaged velocity are neglected.  By substituting
\begin{align}\label{Hawkins:sourceterm}
\Gamma_{\varphi} = \mathcal{S} = f(\varphi)(D \sigma + \chi(1-\varphi) - \mu)
\end{align}
for some non-negative function $f(\varphi)$ leads to the model derived in \cite{article:HawkinsZeeOden12}.  The specific choices for $\Gamma_{\varphi}$ and $\mathcal{S}$ in \eqref{Hawkins:sourceterm} are motivated by linear phenomenological laws for chemical reactions.  The analysis of \eqref{KzeroModel} with the parameters
\begin{align*}
D = 1, \quad \chi = 0, \quad n(\varphi) = m(\varphi) = 1
\end{align*}
has been the subject of study in \cite{article:ColliGilardiHilhorst15,article:ColliGilardiRoccaSprekelsVV,article:ColliGilardiRoccaSprekelsAA,article:FrigeriGrasselliRocca15}, where well-posedness and long-time behaviour have been established for a large class of functions $\Psi(\varphi)$ and $f(\varphi)$.  Alternatively, one may consider the following choice of source terms
\begin{align}\label{realisticSource}
\Gamma_{\varphi} = h(\varphi)(\lambda_{p} \sigma - \lambda_{a}), \quad \mathcal{S} = \lambda_{c} h(\varphi) \sigma,
\end{align}
where $\lambda_{p}$, $\lambda_{a}$, $\lambda_{c}$ are non-negative constants representing the tumour proliferation rate, the apoptosis rate, and the nutrient consumption rate, respectively, and $h(\varphi)$ is a non-negative interpolation function such that $h(-1) = 0$ and $h(1) = 1$.  The above choices for $\Gamma_{\varphi}$ and $\mathcal{S}$ are motivated from the modelling of processes experienced by a young tumour.

The well-posedness of model \eqref{KzeroModel} with the choice \eqref{realisticSource} has been studied by the authors in \cite{article:GarckeLamNeumann} and \cite{article:GarckeLamDirichlet} with the boundary conditions \eqref{CHDNbdy} (neglecting \eqref{CHDNbdy:velopress}) in the former and for non-zero Dirichlet boundary conditions in the latter.  It has been noted in \cite{article:GarckeLamNeumann} that the well-posedness result with the boundary conditions \eqref{CHDNbdy} requires $\Psi$ to have at most quadratic growth, which is attributed to the presence of the source term $\Gamma_{\varphi} \mu = h(\varphi) \mu (\lambda_{p} \sigma - \lambda_{a})$ when deriving useful a priori estimates.  Meanwhile in \cite{article:GarckeLamDirichlet} the Dirichlet boundary conditions and the application of the Poincar\'{e} inequality allows us to overcome this restriction and allow for $\Psi$ to be a regular potential with polynomial growth of order less than 6, and by a Yosida approximation, the case where $\Psi$ is a singular potential is also covered.

We also mention the work of \cite{OCTumour} that utilises a Schauder's fixed point argument to show existence of weak solutions for $\Psi$ with quartic growth and $\Gamma_{\varphi}, \mathcal{S}$ as in \eqref{realisticSource}.  This is based on first deducing that $\sigma$ is bounded by a comparison principle, leading to $\Gamma_{\varphi} \in L^{\infty}(\Omega)$.  Then, the standard a priori estimates are derived for a Cahn--Hilliard equation with bounded source terms.  The difference between \cite{OCTumour} and \cite{article:GarckeLamNeumann,article:GarckeLamDirichlet} is the absence of the chemotaxis and active transport mechanisms, i.e., $\chi = 0$, so that the comparison principle can be applied to the nutrient equation.  We refer to \cite{Dai} for the application of a similar procedure to a multi-species tumour model with logarithmic potentials.

On the other hand, by sending $b \to 0$ and $\chi \to 0$ in \eqref{CHDN}, we should obtain weak solutions of
\begin{subequations}
\label{ChizeroModel}
\begin{alignat}{3}
\div \bm{v} & = \Gamma_{\bm{v}} && \text{ in } Q, \\
\bm{v} &= -K(\nabla p - \mu \nabla \varphi) && \text{ in } Q, \label{velo:ChizeroModel}\\
\pd_{t} \varphi + \div (\varphi \bm{v} ) & = \div (m(\varphi) \nabla \mu) + \Gamma_{\varphi} && \text{ in } Q, \\
\mu & = A\Psi'(\varphi) - B \Laplace \varphi && \text{ in } Q, \\
\pd_{t}\sigma + \div (\sigma \bm{v}) & = \div (n(\varphi) D \nabla \sigma) - \mathcal{S} && \text{ in } Q, \\
0 & = \pdnu \varphi = \pdnu \mu = \pdnu \sigma = \bm{v} \cdot \bm{n} && \text{ on } \Sigma.
\end{alignat}
\end{subequations}
We denote \eqref{ChizeroModel} as the limit system of vanishing chemotaxis.  If the source terms $\Gamma_{\bm{v}}$ and $\Gamma_{\varphi}$ are independent of $\sigma$, then \eqref{ChizeroModel} consists of an independent Cahn--Hilliard--Darcy system and an equation for $\sigma$ which is advected by the volume-averaged velocity field $\bm{v}$.  In the case where there is no nutrient and source terms, i.e., $\sigma = \Gamma_{\bm{v}} = \Gamma_{\varphi} = 0$, global existence of weak solutions in two and three dimensions has been established in \cite{article:FengWise12} via the convergence of a fully discrete and energy stable implicit finite element scheme.  For the well-posedness and long-time behaviour of strong solutions, we refer to \cite{article:LowengrubTitiZhao13}.  Meanwhile, in the case where $\Gamma_{\bm{v}} = \Gamma_{\varphi}$ is prescribed, global weak existence and local strong well-posedness for \eqref{ChizeroModel} without nutrient is shown in \cite{preprint:JiangWuZheng14}.

We also mention the work of \cite{article:BosiaContiGrasselli14} on the well-posedness and long-time behaviour of a related system also used in tumour growth, known as the Cahn--Hilliard--Brinkman system, where in \eqref{ChizeroModel} without nutrient an additional viscosity term is added to the left-hand side of the velocity equation \eqref{velo:ChizeroModel} and the mass exchange terms $\Gamma_{\bm{v}}$ and $\Gamma_{\varphi}$ are set to zero.  The well-posedness of a nonlocal variant of the Cahn--Hilliard--Brinkman system has been investigated in \cite{article:DellaportaGrasselli16}.  Furthermore, when $K$ is a function depending on $\varphi$, the model \eqref{ChizeroModel} with $\sigma = \Gamma_{\bm{v}} = \Gamma_{\varphi} = 0$ is also referred to as the Hele--Shaw--Cahn--Hilliard model (see \cite{article:LeeLowengrubGoodman01,article:LeeLowengrubGoodman01:Part2}).  In this setting, $K(\varphi)$ represents the reciprocal of the viscosity of the fluid mixture.  We refer to \cite{article:WangZhang} concerning the strong well-posedness globally in time for two dimensions and locally in time for three dimensions when $\Omega$ is the $d$-dimensional torus.  Global well-posedness in three dimensions under additional assumptions and long-time behaviour of solutions to the Hele--Shaw--Cahn--Hilliard model are investigated in \cite{article:WangWu}.

We point out that from the derivation of \eqref{CHDN} in \cite{article:GarckeLamSitkaStyles}, the source terms $\Gamma_{\bm{v}}$ and $\Gamma_{\varphi}$ are connected in the sense that $\Gamma_{\bm{v}}$ is related to sum of the mass exchange terms for the tumour and healthy cells, and $\Gamma_{\varphi}$ is related to the difference between the mass exchange terms.  Thus, if $\Gamma_{\varphi}$ would depend on the primary variables $\varphi$, $\sigma$ or $\mu$, then one expects that $\Gamma_{\bm{v}}$ will also depend on the primary variables.  Here, we are able to prove existence of weak solutions for $\Gamma_{\varphi}$ of the form \eqref{assump:Sourcetermspecificform}, which generalises the choices \eqref{Hawkins:sourceterm} and \eqref{realisticSource}, but in exchange $\Gamma_{\bm{v}}$ has to be considered as a prescribed function.  This is attributed to the presence of the source term $\Gamma_{\bm{v}} \left (\varphi \mu + \frac{D}{2} \abs{\sigma}^{2} \right )$ when deriving useful a priori estimates.  We see that if $\Gamma_{\bm{v}}$ depends on the primary variables, we obtain triplet products which cannot be controlled by the usual regularity of $\varphi$, $\mu$ and $\sigma$ in the absence of a priori estimates.

In this work we attempt to generalise the weak existence results for the models studied in  \cite{article:ColliGilardiHilhorst15, article:FrigeriGrasselliRocca15, article:GarckeLamNeumann, article:GarckeLamDirichlet, preprint:JiangWuZheng14,article:LowengrubTitiZhao13} by proving that the weak solutions of \eqref{CHDN} with $\Gamma_{\bm{v}} = 0$ converge (in some appropriate sense) to the weak solutions of \eqref{KzeroModel} as $b \to 0$ and $K \to 0$, and the weak solutions of \eqref{CHDN} converge to the weak solutions of \eqref{ChizeroModel} as $b \to 0$ and $\chi \to 0$.

This paper is organised as follows.  In Section \ref{sec:mainresults} we state the main assumptions and the main results.  In Section \ref{sec:Galerkin} we introduce a Galerkin procedure and derive some a priori estimates for the Galerkin ansatz in Section \ref{sec:apriori} for the case of three dimensions.  We then pass to the limit in Section \ref{sec:passlimit} to deduce the existence result for three dimensions, while in Section \ref{sec:asymptoticlimit} we investigate the asymptotic behaviour of solutions to \eqref{CHDN} as $K \to 0$ and $\chi \to 0$.  In Section \ref{sec:2D}, we outline the a priori estimates for two dimensions and show that the weak solutions for two dimensions yields better temporal regularity than the weak solutions for three dimensions.  In Section \ref{sec:Discussion} we discuss some of the issues present in the analysis of \eqref{CHDN} using different formulations of Darcy's law and the pressure, and with different boundary conditions for the velocity and the pressure.

\paragraph{Notation.}
For convenience, we will often use the notation $L^{p} := L^{p}(\Omega)$ and $W^{k,p} := W^{k,p}(\Omega)$ for any $p \in [1,\infty]$, $k > 0$ to denote the standard Lebesgue spaces and Sobolev spaces equipped with the norms $\norm{\cdot}_{L^{p}}$ and $\norm{\cdot}_{W^{k,p}}$.  In the case $p = 2$ we use $H^{k} := W^{k,2}$ and the norm $\norm{\cdot}_{H^{k}}$.  For the norms of Bochner spaces, we will use the notation $L^{p}(X) := L^{p}(0,T;X)$ for Banach space $X$ and $p \in [1,\infty]$.  Moreover, the dual space of a Banach space $X$ will be denoted by $X^{*}$, and the duality pairing between $X$ and $X^{*}$ is denoted by $\inner{\cdot}{\cdot}_{X, X^{*}}$.  For $d = 2$ or $3$, let $\mathcal{H}^{d-1}$ denote the $(d-1)$ dimensional Hausdorff measure on $\pd \Omega$, and we denote $\R^{d}$-valued functions and any function spaces consisting of vector-valued/tensor-valued functions in boldface.  We will use the notation $\der \bm{f}$ to denote the weak derivative of the vector function $\bm{f}$.

\paragraph{Useful preliminaries.}
For convenience, we recall the Poincar\'{e} inequality: There exists a positive constant $C_{p}$  depending only on $\Omega$ such that, for all $f \in H^{1}$,
\begin{align}
\bignorm{f - \overline{f}}_{L^{2}} & \leq C_{p} \norm{\nabla f}_{L^{2}}, \label{regular:Poincare}
\end{align}
where $\mean{f} := \frac{1}{\abs{\Omega}} \int_{\Omega} f \dx$ denotes the mean of $f$.  The Gagliardo--Nirenberg interpolation inequality in dimension $d$ is also useful (see \cite[Thm. 10.1, p. 27]{book:Friedman}, \cite[Thm. 2.1]{book:DiBenedetto} and \cite[Thm. 5.8]{book:AdamsFournier}):  Let $\Omega$ be a bounded domain with Lipschitz boundary, and $f \in W^{m,r} \cap L^{q}$, $1 \leq q,r \leq \infty$.  For any integer $j$, $0 \leq j < m$, suppose there is $\alpha \in \R$ such that
\begin{align*}
\frac{1}{p} = \frac{j}{d} + \left ( \frac{1}{r} - \frac{m}{d} \right ) \alpha + \frac{1-\alpha}{q}, \quad \frac{j}{m} \leq \alpha \leq 1.
\end{align*}
Then, there exists a positive constant $C$ depending only on $\Omega$, $m$, $j$, $q$, $r$, and $\alpha$ such that
\begin{align}
\label{GagNirenIneq}
\norm{D^{j} f}_{L^{p}} \leq C \norm{f}_{W^{m,r}}^{\alpha} \norm{f}_{L^{q}}^{1-\alpha} .
\end{align}
We will also use the following Gronwall inequality in integral form (see \cite[Lem. 3.1]{article:GarckeLamNeumann} for a proof): Let $\alpha, \beta, u$ and $v$ be real-valued functions defined on $[0,T]$.  Assume that $\alpha$ is integrable, $\beta$ is non-negative and continuous, $u$ is continuous, $v$ is non-negative and integrable.  If $u$ and $v$ satisfy the integral inequality
\begin{align*}
u(s) + \int_{0}^{s} v(t) \dt  \leq \alpha(s) + \int_{0}^{s} \beta(t) u(t) \dt \quad \text{ for } s \in (0,T],
\end{align*}
then it holds that
\begin{align}
\label{Gronwall}
u(s) + \int_{0}^{s} v(t) \dt \leq \alpha(s) + \int_{0}^{s} \beta(t) \alpha(t) \exp \left ( \int_{0}^{t} \beta(r) \dr \right ) \dt.
\end{align}
To analyse the Darcy system, we introduce the spaces
\begin{align*}
L^{2}_{0} & := \{ f \in L^{2} : \mean{f} = 0 \}, \quad H^{2}_{N} := \{ f \in H^{2} : \pdnu f = 0 \text{ on } \pd \Omega \}, \\
(H^{1})^{*}_{0} & := \{ f \in (H^{1})^{*} : \inner{f}{1}_{H^{1}} = 0 \}.
\end{align*}
Then, the Neumann-Laplacian operator $-\Laplace_{N} : H^{1} \cap L^{2}_{0} \to (H^{1})^{*}_{0}$ is positively defined and self-adjoint.  In particular, by the Lax--Milgram theorem and the Poincar\'{e} inequality \eqref{regular:Poincare} with zero mean, the inverse operator $(-\Laplace_{N})^{-1} : (H^{1})^{*}_{0} \to H^{1} \cap L^{2}_{0}$ is well-defined, and we set $u := (-\Laplace_{N})^{-1}f$ for $f \in (H^{1})^{*}_{0}$ if $\mean{u} = 0$ and
\begin{align*}
-\Laplace u = f \text{ in } \Omega, \quad \pdnu u = 0 \text{ on } \pd \Omega.
\end{align*}

\section{Main results}\label{sec:mainresults}
We make the following assumptions.
\begin{assump}\label{assump:main}
\
\begin{enumerate}[label=$(\mathrm{A \arabic*})$, ref = $(\mathrm{A \arabic*})$]
\item The constants $A$, $B$, $K$, $D$, $\chi$ and $b$ are positive and fixed.
\item \label{assump:mn} The mobilities $m, n$ are continuous on $\R$ and satisfy
\begin{align*}
m_{0} \leq m(t) \leq m_{1}, &\quad n_{0} \leq n(t) \leq n_{1} \quad \forall t \in \R,
\end{align*}
for positive constants $m_{0}$, $m_{1}$, $n_{0}$ and $n_{1}$.
\item $\Gamma_{\varphi}$ and $\mathcal{S}$ are of the form
\begin{equation}\label{assump:Sourcetermspecificform}
\begin{aligned}
\Gamma_{\varphi}(\varphi, \mu, \sigma) & =  \Lambda_{\varphi}(\varphi, \sigma) - \Theta_{\varphi}(\varphi, \sigma) \mu, \\
\mathcal{S}(\varphi, \mu, \sigma) & =  \Lambda_{S}(\varphi ,\sigma) - \Theta_{S}(\varphi, \sigma) \mu,
\end{aligned}
\end{equation}
where $\Theta_{\varphi}, \Theta_{S} : \R^{2} \to \R$ are continuous bounded functions with $\Theta_{\varphi}$ non-negative, and $\Lambda_{\varphi}, \Lambda_{S} : \R^{2} \to \R$ are continuous with linear growth
\begin{equation}\label{assump:ThetaLambda}
\begin{aligned}
\abs{\Theta_{i}(\varphi, \sigma)} \leq R_{0}, &\quad \abs{\Lambda_{i}(\varphi, \sigma)} \leq R_{0}( 1 + \abs{\varphi} + \abs{\sigma}) \quad \text{ for } i \in \{ \varphi, S \},
\end{aligned}
\end{equation}
so that
\begin{align}\label{assump:Sourcetermgrowth}
 \abs{\Gamma_{\varphi}} + \abs{\mathcal{S}} \leq R_{0}(1 + \abs{\varphi} + \abs{\mu} + \abs{\sigma}),
\end{align}
for some positive constant $R_{0}$.
\item \label{assump:Gammabmv:prescribed} $\Gamma_{\bm{v}}$ is a prescribed function belonging to $L^{4}(0,T;L^{2}_{0})$.
\item $\Psi \in C^{2}(\R)$ is a non-negative function satisfying
\begin{align}
\Psi(t) & \geq R_{1} \abs{t}^{2} - R_{2} \quad \forall t \in \R \label{assump:lowerbddPsi}
\end{align}
and either one of the following,
\begin{itemize}
\item[$\mathrm{1.}$] if $\Theta_{\varphi}$ is non-negative and bounded, then
\begin{align}\label{assump:Psiquadratic}
\Psi(t) \leq R_{3}(1 + \abs{t}^{2}), & \quad \abs{\Psi'(t)} \leq R_{4} (1 + \abs{t}), \quad \abs{\Psi''(t)} \leq R_{4};
\end{align}
\item[$\mathrm{2.}$] if $\Theta_{\varphi}$ is positive and bounded, that is,
\begin{align}\label{assump:TheatvarphiPositive}
 R_{0} \geq \Theta_{\varphi}(t,s) \geq R_{5} > 0 \quad \forall t,s \in \R,
\end{align}
then
\begin{align}\label{assump:PsiSuperquadratic}
\abs{\Psi''(t)} \leq R_{6} (1 + \abs{t}^{q}), \; q \in [0,4),
\end{align}
\end{itemize}
for some positive constants $R_{1}, R_{2}, R_{3}, R_{4}, R_{5}, R_{6}$. Furthermore we assume that
\begin{align}\label{assump:An0quadratic}
 A > \frac{2 \chi^{2}}{D R_{1}} .
 \end{align}
\item The initial and boundary data satisfy
\begin{align*}
\sigma_{\infty} \in L^{2}(0,T;L^{2}(\pd \Omega)), \quad \sigma_{0} \in L^{2}, \quad  \varphi_{0} \in H^{1}.
 \end{align*}
\end{enumerate}
\end{assump}

We point out that some of the above assumptions are based on previous works on the well-posedness of Cahn--Hilliard systems for tumour growth.  For instance, \eqref{assump:Psiquadratic} and \eqref{assump:An0quadratic} reflect the situation encountered in \cite{article:GarckeLamNeumann}, where if $\Theta_{\varphi} = 0$, i.e., $\Gamma_{\varphi}$ is independent of $\mu$, then the derivation of the a priori estimate requires a quadratic potential.  But in the case where \eqref{assump:TheatvarphiPositive} is satisfied, we can allow $\Psi$ to be a regular potential with polynomial growth of order less than 6, and by a Yosida approximation, we can extend our existence results to the situation where $\Psi$ is a singular potential, see for instance \cite{article:GarckeLamDirichlet}.  Moreover, the condition \eqref{assump:An0quadratic} is a technical assumption based on the fact that the second term of the nutrient free energy $\chi \sigma (1-\varphi)$ does not have a positive sign.

Meanwhile, the linearity of the source terms $\Gamma_{\varphi}$ and $\mathcal{S}$ with respect to the chemical potential $\mu$ assumed in \eqref{assump:Sourcetermspecificform} is a technical assumption based on the expectation that, at best, we have weak convergence for Galerkin approximation to $\mu$, which is in contrast with $\varphi$ and $\sigma$ where we might expect a.e convergence and strong convergence for the Galerkin approximations.  Moreover, if we consider
\begin{align*}
\Theta_{\varphi}(\varphi, \sigma) = \Theta_{S}(\varphi, \sigma) = f(\varphi), \quad \Lambda_{\varphi}(\varphi, \sigma) = \Lambda_{S}(\varphi, \sigma) = f(\varphi)(D \sigma + \chi(1-\varphi)),
\end{align*}
for a non-negative function $f(\varphi)$, then we obtain the source terms in \cite{article:ColliGilardiHilhorst15, article:FrigeriGrasselliRocca15, article:HawkinsZeeOden12}.

Compared to the set-up in \cite{preprint:JiangWuZheng14}, in \ref{assump:Gammabmv:prescribed} we prescribe a higher temporal regularity for the prescribed source term $\Gamma_{\bm{v}}$.  This is needed when we estimate the source term $\Gamma_{\bm{v}} \frac{D}{2} \abs{\sigma}^{2}$ in the absence of a priori estimates, see Section \ref{sec:SourcetermDarcy} for more details.  The mean zero condition is a consequence of the no-flux boundary condition $\bm{v} \cdot \bm{n} = 0$ on $\pd \Omega$ and the divergence equation \eqref{CHDN:div}.  In particular, we can express the Darcy subsystem \eqref{CHDN:div}-\eqref{CHDN:Darcy} as an elliptic equation for the pressure $p$:
\begin{subequations}\label{PressurePoisson}
\begin{alignat}{3}
-\Laplace p &= \frac{1}{K} \Gamma_{\bm{v}} -\div ((\mu + \chi \sigma) \nabla \varphi) && \text{ in } \Omega, \\
\pdnu p &= 0 && \text{ on } \pd \Omega.
\end{alignat}
\end{subequations}
Solutions to \eqref{PressurePoisson} are uniquely determined up to an arbitrary additive function that may only depend on time, and thus without loss of generality, we impose the condition $\mean{p} = \frac{1}{\abs{\Omega}} \int_{\Omega} p \dx = 0$ to \eqref{PressurePoisson}.  We may then define $p$ as
\begin{align}\label{pressure:nonlocal:defn}
p = (-\Laplace_{N})^{-1} \left ( \frac{1}{K} \Gamma_{\bm{v}} - \div ((\mu + \chi \sigma) \nabla \varphi) \right ),
\end{align}
if $\frac{1}{K} \Gamma_{\bm{v}} - \div ((\mu + \chi \sigma) \nabla \varphi) \in (H^{1})^{*}_{0}$.

\begin{remark}
In the case $\Gamma_{\bm{v}} = 0$, one can also consider the assumption
\begin{align}\label{crosssourcetermnonnegative}
\mathcal{S} N_{,\sigma} - \Gamma_{\varphi} \mu = \mathcal{S} (D \sigma + \chi(1-\varphi)) - \Gamma_{\varphi} \mu \geq 0
\end{align}
instead of \eqref{assump:TheatvarphiPositive}, which holds automatically if $\Gamma_{\varphi}$ and $\mathcal{S}$ are chosen to be of the form \eqref{Hawkins:sourceterm}.  In fact this property is used in \cite{article:ColliGilardiHilhorst15, article:FrigeriGrasselliRocca15}.
\end{remark}

We make the following definition.
\begin{defn}[Weak solutions for 3D]\label{defn:weaksoln}
We call a quintuple $(\varphi, \mu, \sigma, \bm{v}, p)$ a weak solution to \eqref{CHDN}-\eqref{CHDNbdy} if
\begin{align*}
\varphi & \in L^{\infty}(0,T;H^{1}) \cap L^{2}(0,T;H^{3}) \cap W^{1,\frac{8}{5}}(0,T;(H^{1})^{*}), \\
\sigma & \in L^{\infty}(0,T;L^{2}) \cap L^{2}(0,T;H^{1}) \cap W^{1,\frac{5}{4}}(0,T;(W^{1,5})^{*}), \\
\mu  & \in L^{2}(0,T;H^{1}), \quad p \in L^{\frac{8}{5}}(0,T;H^{1} \cap L^{2}_{0}), \quad \bm{v} \in L^{2}(0,T;\bm{L}^{2}),
\end{align*}
such that $\varphi(0) = \varphi_{0}$,
\begin{align*}
\inner{\sigma_{0}}{\zeta}_{H^{1}, (H^{1})^{*}} = \inner{\sigma(0)}{\zeta}_{H^{1},(H^{1})^{*}} \quad \forall \zeta \in H^{1},
\end{align*}
and
\begin{subequations}\label{CHDN:weak}
\begin{align}
\inner{\pd_{t}\varphi}{\zeta}_{H^{1}, (H^{1})^{*}} & = \int_{\Omega} -m(\varphi) \nabla \mu \cdot \nabla \zeta + \Gamma_{\varphi} \zeta + \varphi \bm{v} \cdot \nabla \zeta \dx, \label{CHDN:varphi} \\
\int_{\Omega} \mu \zeta \dx & = \int_{\Omega} A \Psi'(\varphi) \zeta + B \nabla \varphi \cdot \nabla \zeta - \chi \sigma \zeta \dx, \label{CHDN:mu} \\
\inner{\pd_{t}\sigma}{\phi}_{W^{1,5}, (W^{1,5})^{*}} & = \int_{\Omega} -n(\varphi) (D \nabla \sigma - \chi \nabla \varphi) \cdot \nabla \phi - \mathcal{S} \phi + \sigma \bm{v} \cdot \nabla \phi \dx  \label{CHDN:sigma} \\
\notag & + \int_{\pd \Omega} b (\sigma_{\infty} - \sigma) \phi \dH, \\
\int_{\Omega}  \nabla p \cdot \nabla \zeta \dx &= \int_{\Omega} \frac{1}{K} \Gamma_{\bm{v}} \zeta + (\mu + \chi \sigma) \nabla \varphi \cdot \nabla \zeta \dx, \label{CHDN:pressure} \\
\int_{\Omega} \bm{v} \cdot \bm{\zeta} \dx & = \int_{\Omega} - K (\nabla p - (\mu + \chi \sigma) \nabla \varphi) \cdot \bm{\zeta} \dx,\label{CHDN:velo}
\end{align}
\end{subequations}
for a.e. $t \in (0,T)$ and for all $\zeta \in H^{1}$, $\phi \in W^{1,5}$, and $\bm{\zeta} \in \bm{L}^{2}$.
\end{defn}

Neglecting the nutrient $\sigma$, we observe that our choice of function spaces for $(\varphi, \mu, p, \bm{v})$ coincide with those in \cite[Defn. 2.1(i)]{preprint:JiangWuZheng14}.  In contrast to the usual $L^{2}(0,T;(H^{1})^{*})$-regularity (see \cite{article:ColliGilardiHilhorst15, article:FrigeriGrasselliRocca15}) we obtain a less regular time derivative $\pd_{t}\varphi$.  The drop in the time regularity from $2$ to $\frac{8}{5}$ is attributed to the convection term $\div (\varphi \bm{v})$ belonging to $L^{\frac{8}{5}}(0,T;(H^{1})^{*})$.  The same is true for the regularity for the time derivative $\pd_{t}\sigma$ in $L^{\frac{5}{4}}(0,T;(W^{1,5})^{*})$ as the convection term $\div(\sigma \bm{v})$ lies in the same space.  We refer the reader to the end of Section \ref{sec:convectionandtimederivative} for a calculation motivating the choice of function spaces for $\div (\sigma \bm{v})$ and $\pd_{t} \sigma$.  Furthermore, the embedding of $L^{\infty}(0,T;H^{1}) \cap W^{1,\frac{8}{5}}(0,T;(H^{1})^{*})$ into $C^{0}([0,T];L^{2})$ from \cite[\S 8, Cor. 4]{article:Simon86} guarantees that the initial condition for $\varphi$ is meaningful. However, for $\sigma$ we have the embedding $L^{\infty}(0,T;L^{2}) \cap W^{1,\frac{5}{4}}(0,T;(W^{1,5})^{*}) \subset \subset C^{0}([0,T];(H^{1})^{*})$, and so $\sigma(0)$ makes sense as a function in $(H^{1})^{*}$.  Thus, the initial condition $\sigma_{0}$ is attained as an equality in $(H^{1})^{*}$.  We now state the existence result for \eqref{CHDN}-\eqref{CHDNbdy}.
\begin{thm}[Existence of weak solutions in 3D and energy inequality]\label{thm:exist}
Let $\Omega \subset \R^{3}$ be a bounded domain with $C^{3}$-boundary $\pd \Omega$.  Suppose Assumption \ref{assump:main} is satisfied.  Then, there exists a weak solution quintuple $(\varphi, \mu, \sigma, \bm{v}, p)$ to \eqref{CHDN}-\eqref{CHDNbdy} in the sense of Definition \ref{defn:weaksoln} with
\begin{align}\label{pressure:velo:regularity}
p \in L^{\frac{8}{7}}(0,T;H^{2}) , \quad \bm{v} \in L^{\frac{8}{7}}(0,T;\bm{H}^{1}),
\end{align}
and in addition satisfies
\begin{equation}
\label{weaksoln:energy:ineq}
\begin{aligned}
& \norm{\varphi}_{L^{\infty}(H^{1}) \cap L^{2}(H^{3})\cap W^{1,\frac{8}{5}}((H^{1})^{*})}  + \norm{\sigma}_{L^{\infty}(L^{2}) \cap W^{1,\frac{5}{4}}((W^{1,5})^{*})) \cap L^{2}(H^{1})} \\
& \quad + \norm{\mu}_{L^{2}(H^{1})} + b^{\frac{1}{2}} \norm{\sigma}_{L^{2}(L^{2}(\pd \Omega))} + \norm{p}_{L^{\frac{8}{5}}(H^{1}) \cap L^{\frac{8}{7}}(H^{2})}  \\
& \quad + K^{-\frac{1}{2}} \left ( \norm{\bm{v}}_{L^{2}(\bm{L}^{2}) \cap L^{\frac{8}{7}}(\bm{H}^{1})} +  \norm{\div (\varphi \bm{v})}_{L^{\frac{8}{5}}((H^{1})^{*})} + \norm{\div (\sigma \bm{v})}_{L^{\frac{5}{4}}((W^{1,5})^{*})} \right ) \\
& \quad \leq C,
\end{aligned}
\end{equation}
where the constant $C$  does not depend on $(\varphi, \mu, \sigma, \bm{v}, p)$ and is uniformly bounded for $b, \chi \in (0,1]$ and is also uniformly bounded for $K \in (0,1]$ when $\Gamma_{\bm{v}} = 0$.
\end{thm}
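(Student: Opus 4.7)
The plan is to construct solutions through a Faedo--Galerkin scheme based on the eigenfunctions $\{w_{i}\}$ of the Neumann--Laplacian on $\Omega$, which form an orthonormal basis of $L^{2}$ and an orthogonal basis of $H^{1}$ and $H^{2}_{N}$. For each $n \geq 1$, I would seek Galerkin approximations $\varphi_n,\mu_n,\sigma_n$ in $\mathrm{span}\{w_1,\dots,w_n\}$ satisfying the weak formulation \eqref{CHDN:varphi}--\eqref{CHDN:sigma} tested against the basis functions. At every time the pressure and velocity are determined \emph{explicitly} from $\varphi_n,\mu_n,\sigma_n$ via \eqref{pressure:nonlocal:defn} and Darcy's law: $p_n = (-\Delta_N)^{-1}\bigl(K^{-1}\Gamma_{\bm v} - \div((\mu_n+\chi\sigma_n)\nabla\varphi_n)\bigr)$ and $\bm v_n = -K(\nabla p_n - (\mu_n+\chi\sigma_n)\nabla\varphi_n)$. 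Substituting these back into the Galerkin equations yields a coupled ODE system for the Fourier coefficients, for which local existence follows from Cauchy--Peano; global existence will follow a posteriori from the uniform a priori bounds.

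The core of the argument is the energy estimate. I would test the $\varphi$-equation with $\mu_n$, the $\mu$-equation with $\partial_t\varphi_n$, and the $\sigma$-equation with $N_{,\sigma}=D\sigma_n+\chi(1-\varphi_n)$. Adding these and using Darcy's law to rewrite the convective terms gives, after cancellations,
\begin{align*}
\frac{d}{dt}\!\int_\Omega\!\!\Bigl(A\Psi(\varphi_n)+\tfrac{B}{2}|\nabla\varphi_n|^2+N(\varphi_n,\sigma_n)\Bigr)dx
+ \int_\Omega\!\!\Bigl(m|\nabla\mu_n|^2+nD|\nabla\sigma_n|^2+K^{-1}|\bm v_n|^2\Bigr)dx
\end{align*}
plus boundary terms $\int_{\partial\Omega} b|\sigma_n|^2\dH$ equals source contributions including the problematic terms $-\int_\Omega \Gamma_{\bm v}(\varphi_n\mu_n + \tfrac{D}{2}|\sigma_n|^2)\,dx$ (from integration by parts on $\div(\varphi\bm v)$ and $\div(\sigma\bm v)$) together with the $\Gamma_\varphi\mu_n$ and $\mathcal S N_{,\sigma}$ terms. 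Thanks to the coercivity \eqref{assump:lowerbddPsi}, the condition \eqref{assump:An0quadratic} absorbs the indefinite $\chi\sigma(1-\varphi)$ part of $N$ into the $L^2$-controls; the linear-in-$\mu$ structure \eqref{assump:Sourcetermspecificform} with non-negative $\Theta_\varphi$ lets the $\Theta_\varphi\mu^2$ term sit on the good side, while the $\Lambda_\varphi\mu$ and $\mathcal S N_{,\sigma}$ contributions are absorbed via Young's inequality under either \eqref{assump:Psiquadratic} (which keeps $\int\Psi'(\varphi)$ bounded in terms of $\|\varphi\|_{H^1}$) or \eqref{assump:TheatvarphiPositive}--\eqref{assump:PsiSuperquadratic} (which recovers a uniform $L^2$-bound on $\mu_n$ from $\Theta_\varphi\mu_n^2$). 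Gronwall's inequality \eqref{Gronwall} then gives the desired bounds in $L^\infty(H^1)\cap L^2(H^1)$ for $\varphi_n,\sigma_n,\mu_n$ and $L^2(\bm L^2)$ for $\bm v_n$.

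Next I would upgrade regularity. Using $\Delta\varphi_n = B^{-1}(A\Psi'(\varphi_n)-\chi\sigma_n-\mu_n)$ with Neumann boundary and elliptic regularity on $H^2_N$, one lifts $\varphi_n$ to $L^2(H^2)$, and then to $L^2(H^3)$ by differentiating the $\mu$-equation (using the $C^3$-boundary assumption). Applied to \eqref{PressurePoisson}, this bootstraps $p_n$ through Gagliardo--Nirenberg in three dimensions: the nonlinear source $\div((\mu+\chi\sigma)\nabla\varphi)$ is controlled in $L^{8/5}(H^{-1})$ using $\nabla\varphi_n\in L^\infty(L^2)\cap L^2(L^\infty)$ (via \eqref{GagNirenIneq}) together with $\mu_n,\sigma_n\in L^2(L^6)$, giving $p_n\in L^{8/5}(H^1)$, and a further bootstrap with $\varphi_n\in L^2(W^{1,\infty})$ yields $p_n\in L^{8/7}(H^2)$ and $\bm v_n\in L^{8/7}(\bm H^1)$. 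For the time derivatives, $\div(\varphi_n\bm v_n)$ is estimated in $L^{8/5}((H^1)^*)$ by pairing $\varphi_n\bm v_n \in L^{8/5}(\bm L^2)$ against gradients of test functions; similarly $\div(\sigma_n\bm v_n)$ is controlled in $L^{5/4}((W^{1,5})^*)$ using $\sigma_n\in L^\infty(L^2)\cap L^2(L^6)$ and $\bm v_n\in L^2\cap L^{8/7}(\bm H^1)$. These combined with the other right-hand side terms give the claimed bounds on $\partial_t\varphi_n$ and $\partial_t\sigma_n$.

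Finally I pass to the limit. From the uniform estimates, extract a subsequence with $\varphi_n\rightharpoonup\varphi$ weakly-$*$ in $L^\infty(H^1)\cap L^2(H^3)$ and with $\partial_t\varphi_n$ bounded in $L^{8/5}((H^1)^*)$; Aubin--Lions gives strong convergence of $\varphi_n$ in $C^0([0,T];L^r)$ for $r<6$ and in $L^2(W^{1,q})$ for suitable $q$, which is enough for pointwise a.e.\ convergence and for passing to the limit in the nonlinearities $\Psi'(\varphi_n)$, in the convection $\varphi_n\bm v_n$, in the continuous source terms $\Gamma_\varphi,\mathcal S$, and in the product $(\mu_n+\chi\sigma_n)\nabla\varphi_n$ that drives the pressure equation. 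Analogous compactness gives strong convergence of $\sigma_n$ in $C^0([0,T];(H^1)^*)\cap L^2(L^r)$; weak convergence suffices for $\mu_n,\bm v_n,p_n$ because of the identified linear structure. The energy inequality \eqref{weaksoln:energy:ineq} is obtained by lower semicontinuity from the Galerkin energy identity, and one checks by inspection of the constants that the bound is uniform for $b,\chi\in(0,1]$ and, when $\Gamma_{\bm v}=0$, also for $K\in(0,1]$. The initial conditions are attained in the stated senses using the embeddings recalled after Definition \ref{defn:weaksoln}. The main obstacle in this programme is the triplet source term $\Gamma_{\bm v}(\varphi\mu+\tfrac{D}{2}|\sigma|^2)$: it forces the $L^4_tL^2_x$ assumption \ref{assump:Gammabmv:prescribed}, and it drives the bootstrap logic that couples the elliptic regularity of $\varphi$ to the Darcy regularity of $p$ and ultimately determines the non-standard exponents $8/5$, $8/7$, $5/4$ appearing in \eqref{pressure:velo:regularity} and \eqref{weaksoln:energy:ineq}.
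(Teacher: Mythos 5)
Your overall plan — Faedo--Galerkin on Neumann eigenfunctions, pressure and velocity determined explicitly from the phase/chemical variables, the dual testing with $\mu_n$, $\partial_t\varphi_n$, $N_{,\sigma}$, adding Darcy's law tested with $\bm v_n/K$, then Gronwall, elliptic bootstrap to $L^{2}(H^{3})$, Gagliardo--Nirenberg for the pressure/velocity regularity, Aubin--Lions and weak/weak-* limits, lower semicontinuity for the energy inequality — is precisely the architecture of the paper's proof, and your identification of the exponents $8/5$, $8/7$, $5/4$ as driven by the Gagliardo--Nirenberg interpolation of $\nabla\varphi$ between $L^{\infty}(H^{1})$ and $L^{2}(H^{3})$ is correct.

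There is, however, a genuine gap at the single most delicate point of the energy estimate, and your proposal as written would not close. When you test Darcy's law with $\bm v_n/K$ and integrate by parts against $\div\bm v_n = \Gamma_{\bm v}$, the source contribution in the energy identity is
\[
\int_\Omega \Gamma_{\bm v}\Bigl(p_k - \mu_k\varphi_k - \tfrac{D}{2}|\sigma_k|^2\Bigr)\,dx,
\]
so a $\int\Gamma_{\bm v}\,p_k$ term appears, not only the $\Gamma_{\bm v}(\varphi\mu+\tfrac D2|\sigma|^2)$ piece you name. To Gronwall this you must control $\|p_k\|_{L^2}$ by quantities already on the dissipative side. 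The obvious estimate, obtained by testing \eqref{pressure:sys} with $p_k$ itself, gives $\|p_k\|_{L^2}\lesssim \|\nabla p_k\|_{L^2}\lesssim \|(\mu_k+\chi\sigma_k)\nabla\varphi_k\|_{L^2}\leq\|\mu_k+\chi\sigma_k\|_{L^6}\|\nabla\varphi_k\|_{L^3}$, and $\nabla\varphi_k\in L^{3}$ is \emph{not} controlled by the quantities available at the stage of the first a priori estimate (only $\nabla\varphi_k\in L^\infty(L^2)$ is). The paper resolves this by a duality argument (Lemma~\ref{lem:pressure:est}): introduce $h=(-\Laplace_N)^{-1}\div((\mu-\overline\mu+\chi\sigma)\nabla\varphi)$, test its weak formulation against $f=(-\Laplace_N)^{-1}h\in H^2_N$, and use elliptic $H^2$-regularity together with $H^2\subset W^{1,6}$ to obtain
\[
\|h\|_{L^2}\lesssim \|(\mu-\overline\mu+\chi\sigma)\nabla\varphi\|_{\bm L^{6/5}}\lesssim \bigl(\|\nabla\mu\|_{\bm L^2}+\chi\|\sigma\|_{L^6}\bigr)\,\|\nabla\varphi\|_{\bm L^{3/2}},
\]
which only needs $\nabla\varphi\in L^{3/2}\subset L^{2}$. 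Coupled with the algebraic cancellation
\[
p_k-\overline{\mu_k}\,\varphi_k = (-\Laplace_N)^{-1}\Bigl(\tfrac1K\Gamma_{\bm v}-\div((\mu_k-\overline{\mu_k}+\chi\sigma_k)\nabla\varphi_k)\Bigr)-\overline{\mu_k}\,\overline{\varphi_k},\qquad \int_\Omega\Gamma_{\bm v}\,\overline{\mu_k}\,\overline{\varphi_k}\,dx=0,
\]
(the last identity using $\Gamma_{\bm v}\in L^2_0$), the term $\int\Gamma_{\bm v}(p_k-\overline{\mu_k}\varphi_k)$ is bounded by quantities that Gronwall absorbs. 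Without this refinement the pressure term on the right-hand side cannot be absorbed, and the estimate does not close. You should make this lemma and the cancellation explicit; Young's inequality alone, as you invoke it, is not enough.

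Two further, smaller omissions: (i) in the case of the growth condition \eqref{assump:PsiSuperquadratic} (potentials up to polynomial order $<6$), the step from $L^{\infty}(H^1)$ to $L^{2}(H^3)$ is not a single application of elliptic regularity but a finite elliptic bootstrap over an increasing sequence of integrability exponents $\{l_j\}$; your one-line ``lift $\varphi_n$ to $L^2(H^2)$ and then to $L^2(H^3)$'' only works directly under the quadratic assumption \eqref{assump:Psiquadratic}. (ii) The $L^{8/5}(H^1)$ bound for $p$ is obtained from $\|\nabla p\|_{L^2}\lesssim\|\mu+\chi\sigma\|_{L^6}\|\nabla\varphi\|_{L^3}+\frac1K\|\Gamma_{\bm v}\|_{L^2}$ and the single interpolation $\|\nabla\varphi\|_{L^3}\lesssim\|\varphi\|_{H^3}^{1/4}\|\varphi\|_{H^1}^{3/4}$; invoking $\nabla\varphi\in L^2(L^\infty)$, as you do, is the right ingredient for the $H^2$-bound but not for the $H^1$-bound in $L^{8/5}$.
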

The regularity result \eqref{pressure:velo:regularity} is new compared to estimates for weak solutions in \cite{preprint:JiangWuZheng14}, which arises from a deeper study of the Darcy subsystem, and can be obtained even in the absence of the nutrient.  We mention that higher regularity estimates for the pressure $p$ in $L^{2}(0,T;H^{2})$ and the velocity $\bm{v}$ in $L^{2}(0,T;\bm{H}^{1})$ are also established in \cite{preprint:JiangWuZheng14}, but these are for strong solutions local in time in three dimensions and global in time for two dimensions.

We now investigate the situation in two dimensions, where the Sobolev embeddings in two dimensions yields better integrability exponents.

\begin{thm}[Existence of weak solutions in 2D]\label{thm:exist2D}
Let $\Omega \subset \R^{2}$ be a bounded domain with $C^{3}$-boundary $\pd \Omega$.  Suppose Assumption \ref{assump:main} is satisfied.  Then, there exists a quintuple $(\varphi, \mu, \sigma, \bm{v}, p)$ to \eqref{CHDN}-\eqref{CHDNbdy} with the following regularity
\begin{align*}
\varphi & \in L^{\infty}(0,T;H^{1}) \cap L^{2}(0,T;H^{3}) \cap W^{1,w}(0,T;(H^{1})^{*}), \quad \mu \in L^{2}(0,T;H^{1}), \\
\sigma & \in L^{2}(0,T;H^{1}) \cap L^{\infty}(0,T;L^{2}) \cap W^{1,r}(0,T;(H^{1})^{*}), \\
p & \in L^{k}(0,T;H^{1} \cap L^{2}_{0}) \cap L^{q}(0,T;H^{2}), \quad \bm{v} \in L^{2}(0,T;\bm{L}^{2}) \cap L^{q}(0,T;\bm{H}^{1}),
\end{align*}
for
\begin{align*}
1 \leq k < 2, \quad 1 \leq q < \frac{4}{3}, \quad 1 < r < \frac{8}{7}, \quad \frac{4}{3} \leq w < 2,
\end{align*}
such that \eqref{CHDN:varphi}, \eqref{CHDN:mu}, \eqref{CHDN:pressure}, \eqref{CHDN:velo} and
\begin{align*}
\inner{\pd_{t}\sigma}{\zeta}_{H^{1}, (H^{1})^{*}} = \int_{\Omega} -n(\varphi) (D \nabla \sigma - \chi \nabla \varphi) \cdot \nabla \zeta - \mathcal{S} \zeta + \sigma \bm{v} \cdot \nabla \zeta \dx + \int_{\pd \Omega} b (\sigma_{\infty} - \sigma) \zeta \dH
\end{align*}
are satisfied for a.e. $t \in (0,T)$, for all $\zeta \in H^{1}$, and all $\bm{\zeta} \in \bm{L}^{2}$.  Furthermore, the initial conditions $\varphi(0) = \varphi_{0}$ and $\sigma(0) = \sigma_{0}$ are attained as in Definition \ref{defn:weaksoln}, and an analogous inequality to \eqref{weaksoln:energy:ineq} also holds.
\end{thm}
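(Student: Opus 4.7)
My plan is to base the argument on the three-dimensional construction of Theorem~\ref{thm:exist} and keep the Galerkin scheme of Section~\ref{sec:Galerkin} unchanged, since the discretisation and its underlying energy identity are dimension-independent. The basic a priori bounds
\[
\varphi \in L^\infty(0,T;H^1) \cap L^2(0,T;H^3), \quad \mu \in L^2(0,T;H^1), \quad \sigma \in L^\infty(0,T;L^2) \cap L^2(0,T;H^1),
\]
together with $\bm{v} \in L^2(0,T;\bm{L}^2)$, follow from Assumption~\ref{assump:main}, in particular from \eqref{assump:An0quadratic} needed to absorb the indefinite contribution coming from $\chi\sigma(1-\varphi)$, so they carry over verbatim to the two-dimensional setting.

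The whole point of the two-dimensional statement is to sharpen the integrability of the Darcy variables, the convection contributions and the time derivatives by exploiting that $H^2 \hookrightarrow L^\infty$ and $H^1 \hookrightarrow L^p$ for every $p < \infty$ when $d=2$. I would first interpolate $\nabla\varphi \in L^\infty(0,T;L^2) \cap L^2(0,T;H^2)$ via \eqref{GagNirenIneq} to obtain $\nabla\varphi \in L^4(0,T;L^p)$ for every finite $p$, and combine this with $\mu \in L^2(0,T;H^1) \hookrightarrow L^2(0,T;L^p)$ to show that $(\mu+\chi\sigma)\nabla\varphi \in L^k(0,T;\bm{L}^2)$ for every $k<2$. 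Standard $H^1$-regularity for the Neumann Poisson problem \eqref{PressurePoisson} then transfers this to $p \in L^k(0,T;H^1 \cap L^2_0)$, and Darcy's law \eqref{CHDN:Darcy} to $\bm{v} \in L^k(0,T;\bm{L}^2)$. For the $H^2$-bound on the pressure I would expand $\div((\mu+\chi\sigma)\nabla\varphi) = \nabla(\mu+\chi\sigma)\cdot\nabla\varphi + (\mu+\chi\sigma)\Laplace\varphi$ and balance the time integrabilities of $\nabla\mu$, $\nabla\sigma$, $\nabla\varphi$, and $\Laplace\varphi \in L^2(0,T;L^p)$, yielding $p \in L^q(0,T;H^2)$ and hence $\bm{v} \in L^q(0,T;\bm{H}^1)$ for every $q < 4/3$.

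Once the improved velocity bound is in hand, I would estimate the convection terms. Writing $\|\varphi\bm{v}\|_{\bm{L}^2} \leq \|\varphi\|_{L^{p_1}}\|\bm{v}\|_{\bm{L}^{p_2}}$ (and analogously for $\sigma\bm{v}$), interpolating $\bm{v}$ between $L^2(0,T;\bm{L}^2)$ and $L^q(0,T;\bm{H}^1)$, and letting $p_2 \downarrow 2$ drives the time exponent of $\varphi\bm{v}$ arbitrarily close to $2$, producing $\div(\varphi\bm{v}) \in L^w(0,T;(H^1)^*)$ for every $w \in [4/3, 2)$; combined with $\sigma \in L^4(0,T;L^4)$, obtained by interpolating $L^\infty(0,T;L^2) \cap L^2(0,T;H^1)$, a Hölder argument gives $\sigma\bm{v} \in L^r(0,T;\bm{L}^2) \hookrightarrow L^r(0,T;(H^1)^*)$ for every $r < 8/7$. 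Together with the routine bounds on $\div(m(\varphi)\nabla\mu)$, $\div(n(\varphi)(D\nabla\sigma - \chi\nabla\varphi))$, $\Gamma_\varphi$ and $\mathcal{S}$, these produce the claimed ranges for $\pd_t\varphi$ and $\pd_t\sigma$. In particular, in contrast to the three-dimensional case, $\sigma$ can now be tested against $\zeta \in H^1$, since $L^s \hookrightarrow (H^1)^*$ in two dimensions for every $s > 1$.

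With the improved uniform bounds in place, passage to the limit in the Galerkin approximation proceeds in exact analogy with Section~\ref{sec:passlimit}: weak and weak-$*$ compactness, together with Aubin--Lions-type strong compactness for $\varphi$ and $\sigma$, identify the limit quintuple and allow us to pass to the nonlinearities $\Psi'(\varphi)$, $m(\varphi)$, $n(\varphi)$, $\Gamma_\varphi$ and $\mathcal{S}$ by continuity, while the Darcy variables $p$ and $\bm{v}$ are recovered using \eqref{CHDN:pressure}--\eqref{CHDN:velo}. The initial conditions $\varphi(0) = \varphi_0$ and $\sigma(0) = \sigma_0$ are obtained from the embeddings $L^\infty(0,T;H^1)\cap W^{1,w}(0,T;(H^1)^*) \hookrightarrow C^0([0,T];L^2)$ and $L^\infty(0,T;L^2)\cap W^{1,r}(0,T;(H^1)^*) \hookrightarrow C^0([0,T];(H^1)^*)$. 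The main technical obstacle lies not in the Galerkin procedure itself but in the careful bookkeeping of Hölder and Gagliardo--Nirenberg exponents needed to land exactly on the endpoint ranges $k<2$, $q<4/3$, $r<8/7$ and $w \in [4/3, 2)$, and in verifying that the analogue of \eqref{weaksoln:energy:ineq} closes uniformly in the Galerkin index.
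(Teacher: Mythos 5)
Your proposal follows the same overall route as Section~\ref{sec:2D}: derive the $(K,b,\chi)$-uniform energy estimates exactly as in 3D (the Galerkin scheme and energy identity being dimension independent), exploit the better Sobolev embeddings in $d=2$ to improve the Darcy regularity via Lemma~\ref{lem:2D:pressure:velo:reg}, and then use the improved velocity regularity $\bm{v}\in L^{q}(\bm{H}^{1})$, $q<\tfrac{4}{3}$, to upgrade the convection bounds so that $\pd_{t}\sigma(t)\in(H^{1})^{*}$ for a.e.\ $t$.  The one point of genuine divergence in detail is the estimate of $\div(\varphi\bm{v})$: you interpolate $\bm{v}$ into $\bm{L}^{p_{2}}$, whereas the paper bounds $\|\varphi_{k}\|_{L^{\infty}}\leq C\|\varphi_{k}\|_{H^{3}}^{1/(q+1)}\|\varphi_{k}\|_{H^{1}}^{q/(q+1)}$ via the 2D Gagliardo--Nirenberg inequality and pairs it only with $\bm{v}_{k}\in L^{2}(\bm{L}^{2})$; so in the paper the improved velocity regularity is needed only for $\div(\sigma\bm{v})$, not for $\div(\varphi\bm{v})$.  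Both variants produce the range $w\in[\tfrac{4}{3},2)$.

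There is, however, a concrete gap in your interpolation for the pressure.  You claim $\nabla\varphi\in L^{4}(0,T;L^{p})$ for every finite $p$ and then apply H\"older against $\mu+\chi\sigma\in L^{2}(0,T;L^{p'})$.  As stated this only yields $(\mu+\chi\sigma)\nabla\varphi\in L^{k}(\bm{L}^{2})$ with $\tfrac{1}{k}=\tfrac{1}{2}+\tfrac{1}{4}$, i.e.\ $k\leq\tfrac{4}{3}$, not every $k<2$ as you assert.  The missing observation is that the time integrability of $\nabla\varphi$ is not uniformly $4$ across all finite spatial exponents: taking $p=\tfrac{2s}{s-1}$, the Gagliardo--Nirenberg exponent is $\alpha=\tfrac{1}{2s}$, so
\[
\norm{\nabla\varphi}_{\bm{L}^{\frac{2s}{s-1}}}\leq C\norm{\varphi}_{H^{3}}^{\frac{1}{2s}}\norm{\varphi}_{H^{1}}^{1-\frac{1}{2s}},
\]
which places $\nabla\varphi$ in $L^{4s}(0,T;L^{\frac{2s}{s-1}})$ with the time exponent growing with $s$.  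H\"older in time then gives $\tfrac{1}{k}=\tfrac{1}{2}+\tfrac{1}{4s}$, i.e.\ $k=\tfrac{4s}{2s+1}\to 2$ as $s\to\infty$, which is exactly the parameter bookkeeping of Lemma~\ref{lem:2D:pressure:velo:reg}.  The analogous care is needed when balancing $\norm{\Laplace\varphi}_{L^{\frac{2s}{s-1}}}$ against $\norm{\mu+\chi\sigma}_{L^{2s}}$ for the $L^{q}(H^{2})$ bound.  Once these exponents are tracked jointly with $s$ rather than frozen at the endpoint, the remainder of your outline goes through.
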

The proof of Theorem \ref{thm:exist2D} is similar to that of Theorem \ref{thm:exist}, and hence the details are omitted.  In Section \ref{sec:2D} we will only present the derivation of a priori estimates.  It is due to the better exponents for embeddings in two dimensions and the regularity result for the velocity that we obtain better regularities for the time derivatives $\pd_{t}\varphi$ and $\pd_{t}\sigma$, namely $\pd_{t} \sigma(t)$ belongs to the dual space $(H^{1})^{*}$ for a.e. $t \in (0,T)$.  Furthermore, as mentioned in Remark \ref{rem:2D:W14star} below, if we only have $\bm{v} \in L^{2}(0,T;\bm{L}^{2})$, then the convection term $\div (\sigma \bm{v})$ and the time derivative $\pd_{t}\sigma$ would only belong to the dual space $L^{\frac{4}{3}}(0,T;(W^{1,4})^{*})$.  However, even with the improved temporal regularity, as $\pd_{t}\sigma \notin L^{2}(0,T;(H^{1})^{*})$, we do not have a continuous embedding into the space $C^{0}([0,T];L^{2})$ and so $\sigma(0)$ may not be well-defined as an element of $L^{2}$.

We now state the two asymptotic limits of \eqref{CHDN} for three dimensions, and note that analogous asymptotic limits also hold for two dimensions.
\begin{thm}[Limit of vanishing permeability]
For $b, K \in (0,1]$, we denote a weak solution to \eqref{CHDN}-\eqref{CHDNbdy} with $\Gamma_{\bm{v}} = 0$ and initial conditions $(\varphi_{0}, \sigma_{0})$ by $(\varphi^{K}, \mu^{K}, \sigma^{K}, \bm{v}^{K}, p^{K})$.  Then, as $b \to 0$ and $K \to 0$, it holds that
\begin{subequations}
\begin{alignat*}{3}
\varphi^{K} & \rightarrow \varphi && \quad \text{ weakly-}* && \quad \text{ in } L^{\infty}(0,T;H^{1}) \cap L^{2}(0,T;H^{3}) \cap W^{1,\frac{8}{5}}(0,T;(H^{1})^{*}), \\
\sigma^{K} & \rightarrow \sigma && \quad \text{ weakly-}* && \quad \text{ in } L^{2}(0,T;H^{1}) \cap L^{\infty}(0,T;L^{2}) \cap W^{1,\frac{5}{4}}(0,T;(W^{1,5})^{*}), \\
\mu^{K} & \rightarrow \mu && \quad \text{ weakly } && \quad \text{ in } L^{2}(0,T;H^{1}), \\
p^{K} & \rightarrow p && \quad \text{ weakly } && \quad \text{ in } L^{\frac{8}{5}}(0,T;H^{1}) \cap L^{\frac{8}{7}}(0,T;H^{2}), \\
\bm{v}^{K} & \rightarrow \bm{0} && \quad \text{ strongly } && \quad \text{ in } L^{2}(0,T;\bm{L}^{2}) \cap L^{\frac{8}{7}}(0,T;\bm{H}^{1}),
\end{alignat*}
\end{subequations}
where $(\varphi, \mu, \sigma, p)$ satisfies
\begin{subequations}
\begin{align}
\inner{\pd_{t}\varphi}{\zeta}_{H^{1}, (H^{1})^{*}} & = \int_{\Omega} -m(\varphi) \nabla \mu \cdot \nabla \zeta + \Gamma_{\varphi}(\varphi, \mu, \sigma) \zeta \dx,  \\
\int_{\Omega} \mu \zeta \dx & = \int_{\Omega} A \Psi'(\varphi) \zeta + B \nabla \varphi \cdot \nabla \zeta - \chi \sigma \zeta \dx,  \\
\inner{\pd_{t}\sigma}{\phi}_{W^{1,5}, (W^{1,5})^{*}} & = \int_{\Omega} -n(\varphi) (D \nabla \sigma - \chi \nabla \varphi) \cdot \nabla \phi - \mathcal{S}(\varphi, \mu, \sigma) \phi \dx,   \\
\int_{\Omega}  \nabla p \cdot \nabla \zeta \dx &= \int_{\Omega} (\mu + \chi \sigma) \nabla \varphi \cdot \nabla \zeta \dx,
\end{align}
\end{subequations}
for all $\zeta \in H^{1}$, $\phi \in W^{1,5}$ and a.e. $t \in (0,T)$.  A posteriori, it holds that
\begin{align*}
\pd_{t}\varphi, \pd_{t} \sigma \in L^{2}(0,T;(H^{1})^{*}),
\end{align*}
and thus $\varphi(0) = \varphi_{0}$ and $\sigma(0) = \sigma_{0}$.
\end{thm}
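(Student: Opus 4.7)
The plan is to exploit the fact that the bounds in \eqref{weaksoln:energy:ineq} are uniform in $(b,K) \in (0,1]^2$ when $\Gamma_{\bm{v}} = 0$, extract convergent subsequences, and pass to the limit in \eqref{CHDN:weak}. Pick a sequence $(b_n, K_n) \to (0,0)$ and abbreviate the associated weak solutions by $(\varphi^n, \mu^n, \sigma^n, \bm{v}^n, p^n)$. By Banach--Alaoglu one extracts subsequences, not relabelled, converging weakly or weakly-$*$ to a limit $(\varphi, \mu, \sigma, \bm{v}, p)$ in every space listed in \eqref{weaksoln:energy:ineq}. The decisive observation is that the prefactor $K^{-1/2}$ on the Darcy-type quantities forces $\norm{\bm{v}^n}_{L^2(\bm{L}^2) \cap L^{8/7}(\bm{H}^1)}$, $\norm{\div(\varphi^n \bm{v}^n)}_{L^{8/5}((H^1)^*)}$, and $\norm{\div(\sigma^n \bm{v}^n)}_{L^{5/4}((W^{1,5})^*)}$ all to be $O(K_n^{1/2})$, so that $\bm{v}^n \to \bm{0}$ strongly and both convective divergences vanish in their dual spaces.

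To pass to the limit in the nonlinearities I would invoke the Aubin--Lions compactness lemma: using $L^2(H^3) \cap W^{1,8/5}((H^1)^*) \subset\subset L^2(H^2)$ gives $\varphi^n \to \varphi$ strongly in $L^2(0,T;H^2)$, while $L^2(H^1) \cap W^{1,5/4}((W^{1,5})^*) \subset\subset L^2(L^2)$ gives $\sigma^n \to \sigma$ strongly in $L^2(0,T;L^2)$, together with a.e.\ pointwise convergence along a further subsequence. The growth of $\Psi'$ from either \eqref{assump:Psiquadratic} or \eqref{assump:PsiSuperquadratic}, combined with the uniform $L^\infty(H^1)$-bound, yields $\Psi'(\varphi^n) \to \Psi'(\varphi)$ strongly in $L^2(L^2)$. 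Since $\Theta_\varphi, \Theta_S$ are continuous and bounded, $\Theta_i(\varphi^n, \sigma^n) \to \Theta_i(\varphi, \sigma)$ a.e.\ and, by dominated convergence, strongly in $L^q(Q)$ for every finite $q$; coupled with $\mu^n \rightharpoonup \mu$ in $L^2(L^2)$ this yields $\Theta_i(\varphi^n, \sigma^n)\mu^n \rightharpoonup \Theta_i(\varphi, \sigma)\mu$ weakly in $L^2(L^2)$. This is the sole place where the special structure \eqref{assump:Sourcetermspecificform} is essential, and constitutes the main technical obstacle.

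The boundary contribution in \eqref{CHDN:sigma} drops out thanks to
\[
\bignorm{b_n(\sigma_\infty - \sigma^n)}_{L^2(L^2(\pd \Omega))} \leq b_n \norm{\sigma_\infty}_{L^2(L^2(\pd \Omega))} + b_n^{1/2} \bigl( b_n^{1/2} \norm{\sigma^n}_{L^2(L^2(\pd \Omega))} \bigr) = O(b_n^{1/2}).
\]
Passage to the limit in \eqref{CHDN:varphi}--\eqref{CHDN:sigma} then produces the three PDEs of the limit system, the convective products $\varphi^n \bm{v}^n$ and $\sigma^n \bm{v}^n$ going to zero in $L^1(Q)$ by H\"older applied to $\bm{v}^n \to \bm{0}$. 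Analogously, \eqref{CHDN:pressure} with $\Gamma_{\bm{v}} = 0$ passes to the limit since $\mu^n + \chi \sigma^n \rightharpoonup \mu + \chi \sigma$ in $L^2(L^2)$ while $\nabla \varphi^n \to \nabla \varphi$ strongly in $L^2(\bm{L}^2)$, yielding the elliptic equation for $p$.

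Finally, the a posteriori upgrade $\pd_t \varphi, \pd_t \sigma \in L^2(0,T;(H^1)^*)$ follows directly from the limit equations: with $\bm{v} \equiv \bm{0}$ there are no convective contributions, so testing against $\zeta \in H^1$ controls $\pd_t \varphi$ by $\norm{\nabla \mu}_{L^2(L^2)} + \norm{\Gamma_\varphi}_{L^2(L^2)}$ and $\pd_t \sigma$ by $\norm{\nabla \sigma}_{L^2(L^2)} + \norm{\nabla \varphi}_{L^2(L^2)} + \norm{\mathcal{S}}_{L^2(L^2)}$, all finite under the uniform bounds. Combined with $\sigma \in L^2(H^1)$, this places $\sigma \in C^0([0,T];L^2)$ by a standard Lions--Magenes argument, so that $\sigma(0) = \sigma_0$ holds as an equality in $L^2$, while $\varphi(0) = \varphi_0$ persists from the embedding already used at fixed $K > 0$.
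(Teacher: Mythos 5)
Your proposal follows essentially the same route as the paper: uniform-in-$(b,K)$ bounds from the energy inequality, extraction of weak/weak-$*$ convergent subsequences, the $K^{-1/2}$ prefactor forcing $\bm{v}^K$ and the convective divergences to vanish strongly, limit passage in the weak formulation, and an a posteriori upgrade of $\pd_t\varphi,\pd_t\sigma$ to $L^2(0,T;(H^1)^*)$ once $\bm{v}\equiv\bm{0}$. Three points in your sketch are glossed over relative to what is actually needed. First, the claim that the $L^\infty(H^1)$ bound alone gives $\Psi'(\varphi^n)\to\Psi'(\varphi)$ strongly in $L^2(L^2)$ does not hold in the superquadratic case \eqref{assump:PsiSuperquadratic} when $q\geq 2$ (then $|\Psi'|$ can grow like $|\varphi|^{q+1}$ with $q+1>3$, and $L^\infty(L^6)$ only controls $\Psi'(\varphi^n)$ in $L^\infty(L^{6/(q+1)})$ with $6/(q+1)\leq 2$); one either uses the additional $L^2(H^3)$ bound and interpolation, or, as the paper does, only argues for convergence in a weaker topology that still suffices for the limit passage. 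Second, your a posteriori step "testing against $\zeta\in H^1$" hides a density argument: the limit equation for $\sigma$ is initially valid only for $\phi\in W^{1,5}$, so identifying $\pd_t\sigma$ as an element of $L^2(0,T;(H^1)^*)$ requires extending the identity via density of $C^1_c(0,T)\otimes H^2_N$ in $C^1_c(0,T;H^1)$, which the paper carries out explicitly. Third, for the attainment of the initial conditions you need strong $C^0$-in-time convergences $\varphi^K\to\varphi$ in $C^0([0,T];L^2)$ and $\sigma^K\to\sigma$ in $C^0([0,T];(H^1)^*)$ (via Simon's compactness, as the paper uses), not just the Aubin--Lions $L^2$-in-time compactness you invoke; once that is in hand, the upgrade $\pd_t\sigma\in L^2((H^1)^*)$ places $\sigma\in C^0([0,T];L^2)$ and lets you read off $\sigma(0)=\sigma_0$ as an $L^2$-equality. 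None of these alter the structure of the argument, but each needs to be spelled out for the proof to close.
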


\begin{thm}[Limit of vanishing chemotaxis]
For $b,\chi \in (0,1]$, we denote a weak solution to \eqref{CHDN}-\eqref{CHDNbdy} with corresponding initial conditions $(\varphi_{0}, \sigma_{0})$ by $(\varphi^{\chi}, \mu^{\chi}, \sigma^{\chi}, \bm{v}^{\chi}, p^{\chi})$.  Then, as $b \to 0$ and $\chi \to 0$, it holds that
\begin{subequations}
\begin{alignat*}{3}
\varphi^{\chi} & \rightarrow \varphi && \quad \text{ weakly-}* && \quad \text{ in } L^{\infty}(0,T;H^{1}) \cap L^{2}(0,T;H^{3}) \cap W^{1,\frac{8}{5}}(0,T;(H^{1})^{*}), \\
\sigma^{\chi} & \rightarrow \sigma && \quad \text{ weakly-}* && \quad \text{ in } L^{2}(0,T;H^{1}) \cap L^{\infty}(0,T;L^{2}) \cap W^{1,\frac{5}{4}}(0,T;(W^{1,5})^{*}), \\
\mu^{\chi} & \rightarrow \mu && \quad \text{ weakly } && \quad \text{ in } L^{2}(0,T;H^{1}), \\
p^{\chi} & \rightarrow p && \quad \text{ weakly } && \quad \text{ in } L^{\frac{8}{5}}(0,T;H^{1}) \cap L^{\frac{8}{7}}(0,T;H^{2}), \\
\bm{v}^{\chi} & \rightarrow \bm{v} && \quad \text{ weakly } && \quad \text{ in } L^{2}(0,T;\bm{L}^{2}) \cap L^{\frac{8}{7}}(0,T;\bm{H}^{1}),
\end{alignat*}
\end{subequations}
and
\begin{subequations}
\begin{alignat*}{3}
\div(\varphi^{\chi} \bm{v}^{\chi}) & \rightarrow \div (\varphi \bm{v}) && \quad \text{ weakly } && \quad \text{ in } L^{\frac{8}{5}}(0,T;(H^{1})^{*}), \\
\div(\sigma^{\chi} \bm{v}^{\chi})& \rightarrow \div (\sigma \bm{v}) && \quad \text{ weakly } && \quad \text{ in } L^{\frac{5}{4}}(0,T;(W^{1,5})^{*}),
\end{alignat*}
\end{subequations}
where ($\varphi, \mu, \sigma, \bm{v}, p)$ satisfies
\begin{subequations}
\begin{align}
\inner{\pd_{t}\varphi}{\zeta}_{H^{1}, (H^{1})^{*}} & = \int_{\Omega} -m(\varphi) \nabla \mu \cdot \nabla \zeta + \Gamma_{\varphi}(\varphi, \mu, \sigma) \zeta + \varphi \bm{v} \cdot \nabla \zeta \dx,  \\
\int_{\Omega} \mu \zeta \dx & = \int_{\Omega} A \Psi'(\varphi) \zeta + B \nabla \varphi \cdot \nabla \zeta \dx,  \\
\inner{\pd_{t}\sigma}{\phi}_{W^{1,5}, (W^{1,5})^{*}} & = \int_{\Omega} -n(\varphi) D \nabla \sigma \cdot \nabla \phi - \mathcal{S}(\varphi, \mu, \sigma) \phi + \sigma \bm{v} \cdot \nabla \phi \dx , \\
\int_{\Omega}  \nabla p \cdot \nabla \zeta \dx &= \int_{\Omega} \frac{1}{K} \Gamma_{\bm{v}} \zeta + \mu \nabla \varphi \cdot \nabla \zeta \dx,  \\
\int_{\Omega} \bm{v} \cdot \bm{\zeta} \dx & = \int_{\Omega} - K (\nabla p - \mu \nabla \varphi) \cdot \bm{\zeta} \dx,
\end{align}
\end{subequations}
for all $\zeta \in H^{1}$, $\phi \in W^{1,5}$, $\bm{\zeta} \in \bm{L}^{2}$ and a.e. $t \in (0,T)$, with the attainment of initial conditions as in Definition \ref{defn:weaksoln}.
\end{thm}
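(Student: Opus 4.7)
The proof proceeds by compactness and passage to the limit, paralleling the argument that establishes Theorem \ref{thm:exist} with $(b, \chi)$ playing the role of the Galerkin index. The essential input is that the constant $C$ in \eqref{weaksoln:energy:ineq} is uniform for $b, \chi \in (0, 1]$, so the weak solutions $(\varphi^{\chi}, \mu^{\chi}, \sigma^{\chi}, \bm{v}^{\chi}, p^{\chi})$ enjoy bounds independent of $b$ and $\chi$ in all the norms appearing there. A Banach--Alaoglu selection then extracts a non-relabelled subsequence that converges weakly/weakly-$*$ to a quintuple $(\varphi, \mu, \sigma, \bm{v}, p)$ in the listed spaces, and $\div(\varphi^{\chi}\bm{v}^{\chi})$, $\div(\sigma^{\chi}\bm{v}^{\chi})$ converge weakly to some limits in $L^{8/5}(0,T;(H^{1})^{*})$ and $L^{5/4}(0,T;(W^{1,5})^{*})$ that will be identified below.

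Strong compactness comes from Aubin--Lions. The bound $\varphi^{\chi} \in L^{\infty}(H^{1}) \cap W^{1, 8/5}((H^{1})^{*})$ together with the compact embedding $H^{1} \hookrightarrow\hookrightarrow L^{r}$ for $r<6$ yields $\varphi^{\chi} \to \varphi$ in $C^{0}([0,T]; L^{r})$; upgrading via $\varphi^{\chi} \in L^{2}(H^{3})$ and $H^{3} \hookrightarrow\hookrightarrow W^{1,p}$ for $p<6$ gives $\nabla \varphi^{\chi} \to \nabla \varphi$ strongly in $L^{2}(0,T; \bm{L}^{p})$. Similarly $\sigma^{\chi} \in L^{2}(H^{1}) \cap W^{1, 5/4}((W^{1,5})^{*})$ with $H^{1} \hookrightarrow\hookrightarrow L^{2} \hookrightarrow (W^{1,5})^{*}$ yields $\sigma^{\chi} \to \sigma$ strongly in $L^{2}(0,T; L^{2})$ and in $C^{0}([0,T]; (H^{1})^{*})$. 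Passing to a further subsequence, both sequences converge pointwise almost everywhere.

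One now passes to the limit in each weak identity. The chemotactic contributions $-\chi \sigma^{\chi}$, $\chi n(\varphi^{\chi}) \nabla \varphi^{\chi}$ and $\chi \sigma^{\chi} \nabla \varphi^{\chi}$ have norms of order $\chi$ in the relevant dual spaces and therefore vanish, producing the chemotaxis-free limit equations; the boundary integral $b \int_{\pd \Omega}(\sigma_{\infty} - \sigma^{\chi}) \phi \dH$ is bounded by $C b \norm{\phi}_{H^{1}}$ via the uniform trace estimate on $\sigma^\chi \in L^2(H^1)$ and vanishes as $b \to 0$. By continuity and the growth bounds \eqref{assump:Psiquadratic} or \eqref{assump:PsiSuperquadratic}, dominated convergence gives $\Psi'(\varphi^{\chi}) \to \Psi'(\varphi)$ strongly in $L^{2}(Q)$; the decomposition \eqref{assump:Sourcetermspecificform} with $\Theta_{i}, \Lambda_{i}$ continuous and bounded as in \eqref{assump:ThetaLambda}, combined with strong convergence of $(\varphi^{\chi}, \sigma^{\chi})$ and weak convergence of $\mu^{\chi}$ in $L^{2}(H^{1})$, lets me pass to the limit in the source terms $\Gamma_{\varphi}$ and $\mathcal{S}$ by the weak-times-strong principle. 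For the term $\mu^{\chi} \nabla \varphi^{\chi}$ appearing in \eqref{CHDN:pressure} and in Darcy's law, weak convergence of $\mu^{\chi}$ in $L^{2}(L^{6})$ against strong convergence of $\nabla \varphi^{\chi}$ in $L^{2}(\bm{L}^{p})$ ($p<6$) identifies the limit. For the convective products, given smooth $\zeta \in C^{\infty}(\bar Q)$, strong convergence $\varphi^{\chi} \nabla \zeta \to \varphi \nabla \zeta$ in $L^{2}(\bm{L}^{2})$ coupled with $\bm{v}^{\chi} \rightharpoonup \bm{v}$ weakly in $L^{2}(\bm{L}^{2})$ gives
\[
\int_{0}^{T} \!\! \int_{\Omega} \varphi^{\chi} \bm{v}^{\chi} \cdot \nabla \zeta \dx \dt \longrightarrow \int_{0}^{T} \!\! \int_{\Omega} \varphi \bm{v} \cdot \nabla \zeta \dx \dt,
\]
which together with the uniform $L^{8/5}((H^{1})^{*})$ bound and density of smooth test fields identifies the weak limit of $\div(\varphi^{\chi}\bm{v}^{\chi})$ as $\div(\varphi \bm{v})$; the analogous argument via $\sigma^{\chi} \to \sigma$ strongly in $L^{2}(L^{2})$ identifies the weak limit of $\div(\sigma^{\chi} \bm{v}^{\chi})$ as $\div(\sigma \bm{v})$ in $L^{5/4}((W^{1,5})^{*})$.

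Attainment of the initial data follows from the continuous embeddings $L^{\infty}(H^{1}) \cap W^{1,8/5}((H^{1})^{*}) \hookrightarrow C^{0}([0,T]; L^{2})$ and $L^{\infty}(L^{2}) \cap W^{1,5/4}((W^{1,5})^{*}) \hookrightarrow C^{0}([0,T]; (H^{1})^{*})$ exactly as in Definition \ref{defn:weaksoln}. The principal technical difficulty lies in the product terms just treated: neither $\mu^{\chi}$ nor $\bm{v}^{\chi}$ is known to converge strongly, and identifying $\mu^{\chi} \nabla \varphi^{\chi}$, $\varphi^{\chi} \bm{v}^{\chi}$ and $\sigma^{\chi} \bm{v}^{\chi}$ rests crucially on the $L^{2}(H^{3})$ regularity of $\varphi$ supplied by Theorem \ref{thm:exist} and on the corresponding strong compactness of $\varphi^{\chi}$ and $\sigma^{\chi}$ in the $L^{p}$ scale, which is what makes the weak-times-strong principle applicable.
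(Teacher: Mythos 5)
Your proposal is correct and follows essentially the same route as the paper: use the uniformity of the energy inequality \eqref{weaksoln:energy:ineq} in $b, \chi \in (0,1]$ to extract weak/weak-* limits, obtain strong $L^{2}(L^{r})$ and a.e. convergences for $\varphi^{\chi}$ and $\sigma^{\chi}$ by compactness, observe that all the $\chi$- and $b$-weighted terms are $O(\chi)$ or $O(b)$ in the relevant dual norms and hence vanish, and identify the remaining nonlinear products (source terms, $\mu^{\chi}\nabla\varphi^{\chi}$, convection terms) by the weak--strong pairing principle. The paper tests against $\delta(t)w_{j}$ with $\delta \in C^{\infty}_{c}(0,T)$ and the Galerkin basis $\{w_{j}\}$, then uses density of $H^{2}_{N}$ in $H^{1}$ and $W^{1,5}$; you test against smooth fields and invoke density, which is the same idea. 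One small caveat: strong convergence in $C^{0}([0,T];L^{r})$ and $C^{0}([0,T];(H^{1})^{*})$ comes from Simon's compactness theorem (the $L^{\infty}$-in-time plus $W^{1,p}$-in-time version), not from the basic Aubin--Lions lemma, but this is a matter of citation rather than substance since the paper itself relies on \cite[\S 8, Cor.\ 4]{article:Simon86}.
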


\section{Galerkin approximation}\label{sec:Galerkin}
We will employ a Galerkin approximation similar to the one used in \cite{preprint:JiangWuZheng14}.  For the approximation, we use the eigenfunctions of the Neumann--Laplacian operator $\{w_{i}\}_{i \in \N}$.  Recall that the inverse Neumann--Laplacian operator  $\mathcal{L} := (-\Laplace_{N})^{-1} \vert_{L^{2}_{0}} : L^{2}_{0} \to L^{2}_{0}$ is compact, positive and symmetric.  Indeed, let $f, g \in L^{2}_{0}$ with $z = \mathcal{L}f$, $y = \mathcal{L}g$.  Then,
\begin{align*}
(\mathcal{L}f,f)_{L^{2}} = \int_{\Omega} z f \dx = \int_{\Omega} \abs{\nabla z}^{2} \dx \geq 0, \quad (\mathcal{L}f,g)_{L^{2}} = \int_{\Omega} \nabla z \cdot \nabla y \dx = (f, \mathcal{L}g)_{L^{2}}.
\end{align*}
Furthermore, let $\{f_{n}\}_{n \in \N} \subset L^{2}_{0}$ denote a sequence with corresponding solution sequence $\{z_{n} = \mathcal{L} f_{n} \}_{n \in \N} \subset H^{1} \cap L^{2}_{0}$.  By elliptic regularity theory, we have that $z_{n} \in H^{2}_{N}$ for all $n \in \N$.  Then, by reflexive weak compactness theorem and Rellich--Kondrachov theorem, there exists a subsequence such that $z_{n_{j}} \to z \in H^{1} \cap L^{2}_{0}$ as $j \to \infty$.

Thus, by the spectral theorem, the operator $\mathcal{L}$ admits a countable set of eigenfunctions $\{v_{n}\}_{n \in \N}$ that forms a complete orthonormal system in $L^{2}_{0}$.  The eigenfunctions of the Neumann--Laplacian operator is then given by $w_{1} = 1$, $w_{i} = v_{i-1}$ for $i \geq 2$, and $\{w_{i}\}_{i \in \N}$ is a basis of $L^{2}$.

Elliptic regularity theory gives that $w_{i} \in H^{2}_{N}$ and for every $g \in H^{2}_{N}$, we obtain for $g_{k} := \sum_{i=1}^{k} (g, w_{i})_{L^{2}} w_{i}$ that
\begin{align*}
\Laplace g_{k} = \sum_{i=1}^{k} (g, w_{i})_{L^{2}} \Laplace w_{i} = \sum_{i=1}^{k} (g, \lambda_{i} w_{i})_{L^{2}} w_{i} = \sum_{i=1}^{k} (g, \Laplace w_{i})_{L^{2}} w_{i} = \sum_{i=1}^{k} (\Laplace g, w_{i})_{L^{2}} w_{i},
\end{align*}
where $\lambda_{i}$ is the corresponding eigenvalue to $w_{i}$.  This shows that $\Laplace g_{k}$ converges strongly to $\Laplace g$ in $L^{2}$.  Making use of elliptic regularity theory again gives that $g_{k}$ converges strongly to $g$ in $H^{2}_{N}$.  Thus the eigenfunction $\{w_{i}\}_{i \in \N}$ of the Neumann--Laplace operator forms an orthonormal basis of $L^{2}$ and is also a basis of $H^{2}_{N}$.

Later in Section \ref{sec:passlimit}, we will need to use the property that $H^{2}_{N}$ is dense in $H^{1}$ and $W^{1,5}$.  We now sketch the argument for the denseness of $H^{2}_{N}$ in $W^{1,5}$ and the argument for $H^{1}$ follows in a similar fashion.

\begin{lemma}
$H^{2}_{N}$ is dense in $W^{1,5}$.
\end{lemma}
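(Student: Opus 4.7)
The plan is a two-step density argument. First, since $\partial \Omega$ is Lipschitz, it is standard that $C^{\infty}(\bar \Omega)$ is dense in $W^{1,5}(\Omega)$, so it suffices to approximate any fixed $g \in C^{\infty}(\bar \Omega)$ arbitrarily well in $W^{1,5}$ by functions in $H^{2}_{N}$. The idea is to modify $g$ only in a thin boundary layer of width $\delta$ so as to cancel $\pdnu g$ on $\partial \Omega$, while incurring a $W^{1,5}$-cost that tends to zero as $\delta \to 0$.

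To construct the correction I would exploit the $C^{3}$-regularity of $\partial \Omega$: the signed distance function $d(x) := \dist(x, \partial \Omega)$ is $C^{3}$ in a one-sided tubular neighborhood $U$ of $\partial \Omega$, with $\nabla d = -\bm{n}$ and hence $\pdnu d = -1$ on $\partial \Omega$; moreover the nearest-point projection $\pi : U \to \partial \Omega$ is $C^{2}$. Extend the boundary data smoothly by $F(x) := (\pdnu g)(\pi(x))$, which is $C^{2}$ on $U$. For $\delta > 0$ small enough that $\{d < 2\delta\} \subset U$, pick a cutoff $\chi_{\delta} \in C_{c}^{\infty}(U)$ with $\chi_{\delta} \equiv 1$ on $\{d \leq \delta\}$, $\chi_{\delta} \equiv 0$ on $\{d \geq 2\delta\}$, and $|\nabla \chi_{\delta}| \leq C/\delta$. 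Setting
\[
\tilde g := g + \chi_{\delta} \, d \, F \; \in \; C^{2}(\bar \Omega) \subset H^{2},
\]
a direct computation at the boundary gives
\[
\pdnu \tilde g \big|_{\partial \Omega} = \pdnu g + \chi_{\delta}\big|_{\partial \Omega} (\pdnu d)\big|_{\partial \Omega} F\big|_{\partial \Omega} = \pdnu g - \pdnu g = 0,
\]
so $\tilde g \in H^{2}_{N}$ as required.

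For the norm estimate, $\chi_{\delta} \, d \, F$ is supported in the strip $\{d \leq 2\delta\}$, which has Lebesgue measure $O(\delta)$. Pointwise $|\chi_{\delta} d F| \leq 2\delta \norm{F}_{L^{\infty}}$; for the gradient, the seemingly dangerous term $(\nabla \chi_{\delta}) d F$ is controlled by $(C/\delta)(2\delta)\norm{F}_{L^{\infty}} = O(1)$, and the remaining contributions $\chi_{\delta}(\nabla d) F$ and $\chi_{\delta} d (\nabla F)$ are also $O(1)$ on the support. Integrating,
\[
\norm{\chi_{\delta} d F}_{L^{5}}^{5} = O(\delta^{6}), \qquad \norm{\nabla(\chi_{\delta} d F)}_{L^{5}}^{5} = O(\delta),
\]
and therefore $\norm{\tilde g - g}_{W^{1,5}} = O(\delta^{1/5}) \to 0$ as $\delta \to 0$, concluding the proof. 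The main technical point is the boundary bookkeeping required to obtain an $H^{2}$-regular correction with vanishing normal derivative; this is precisely where the $C^{3}$-regularity of $\partial \Omega$ enters, via the regularity of $d$ and $\pi$. The identical construction establishes denseness of $H^{2}_{N}$ in $H^{1}$ with no changes beyond the integrability exponent.
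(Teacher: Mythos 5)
Your proof is correct and takes a genuinely different route from the paper's. Both arguments begin by reducing, via density of $C^{\infty}(\overline{\Omega})$ in $W^{1,5}$, to the approximation of a smooth $g$, and both then work in a tubular neighbourhood of $\partial\Omega$ where the distance function and nearest-point projection are regular (this is where the $C^{3}$-boundary assumption is spent). The difference lies in how the Neumann condition is enforced. The paper uses a convex blending: it defines $f_{n,\eps}$ by extending the boundary trace of $g_{n}$ constantly in the normal direction and sets $G_{n,\eps}=\zeta_{\eps}g_{n}+(1-\zeta_{\eps})f_{n,\eps}$, so that $G_{n,\eps}$ literally coincides with a normal-direction-constant function in a boundary collar, making $\pdnu G_{n,\eps}=0$ immediate. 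You instead keep $g$ and add a multiplicative correction $\chi_{\delta}\,d\,F$ designed so that precisely the normal derivative is cancelled at $\partial\Omega$, exploiting $\nabla d|_{\partial\Omega}=-\bm{n}$; the function itself is left unchanged on $\partial\Omega$. In both cases the critical gradient term is a product of a cutoff derivative of size $O(\delta^{-1})$ with a factor of size $O(\delta)$ — the difference $g_{n}-f_{n,\eps}$ controlled by the fundamental theorem of calculus along the normal in the paper, the factor $d$ explicitly in your construction — and both land on the same $O(\delta^{1/5})$ rate after integrating over the $O(\delta)$-measure collar. Your additive-correction formulation is perhaps slightly more economical to write down and makes the cancellation mechanism more transparent, since $d$ appears explicitly; the paper's blending formulation has the mild advantage that it never needs to differentiate the composite $(\pdnu g)\circ\pi$, only the extension $f_{n,\eps}$, which sidesteps any bookkeeping about how much smoothness the projection $\pi$ retains. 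Either way the regularity budget ($d\in C^{3}$, $\pi\in C^{2}$, $F\in C^{2}$, hence $\tilde g\in C^{2}(\overline{\Omega})\subset H^{2}$) is sufficient, and your final estimates are correct.
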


\begin{proof}
Take $g \in W^{1,5}$, as $\Omega$ has a $C^{3}$-boundary, by standard results \cite[Thm. 3, \S 5.3.3]{book:Evans} there exists a sequence $g_{n} \in C^{\infty}(\overline{\Omega})$ such that $g_{n} \to g$ strongly in $W^{1,5}$.  Let $\eps > 0$ be fixed, and define $D_{\eps} := \{ x \in \Omega : \text{ dist}(x, \pd \Omega) \leq \eps \}$.  Let $\zeta_{\eps} \in C^{\infty}_{c}(\Omega)$ be a smooth cut-off function such that $\zeta_{\eps} = 1$ in $\Omega \setminus \overline{D_{\eps}}$ and $\zeta_{\eps} = 0$ in $\overline{D_{\frac{\eps}{2}}}$.

As $g_{n} \in C^{\infty}(\overline{\Omega})$, its trace on $\pd \Omega$ is well-defined.  Choosing $\eps$ sufficiently small allows us to use a classical result from differential geometry about tubular neighbourhoods, i.e., for any $z \in \mathrm{Tub}_{\eps}(\pd \Omega) := \{x \in \R^{d} : \abs{\dist(z, \pd \Omega)} \leq \eps \}$ there exists a unique $y \in \pd \Omega$ such that
\begin{align*}
z = y + \dist(z, \pd \Omega) \bm{n}(y),
\end{align*}
where $\bm{n}$ is the outer unit normal of $\pd \Omega$.  We consider a bounded smooth function $f_{n,\eps}: \R^{d} \to \R$ such that
\begin{align*}
f_{n,\eps}(z) = g_{n}(y) \text{ for all } z \in \mathrm{Tub}_{\eps}(\pd \Omega) \text{ satisfying } z = y + \dist(z, \pd \Omega) \bm{n}(y).
\end{align*}
We now define the smooth function $G_{n,\eps}$ as
\begin{align*}
G_{n,\eps}(x) := \zeta_{\eps}(x) g_{n}(x) + (1-\zeta_{\eps}(x)) f_{n,\eps}(x).
\end{align*}
By construction, the values of the function $f_{n,\eps}$ in $D_{\eps} \subset \mathrm{Tub}_{\eps}(\pd \Omega)$ are constant in the normal direction, so $\nabla G_{n,\eps} \cdot \bm{n} = 0$ on $\pd \Omega$ and thus $G_{n,\eps} \in H^{2}_{N}$.  Furthermore, we compute that
\begin{align*}
\norm{G_{n,\eps} - g_{n}}_{L^{5}} & = \norm{(1-\zeta_{\eps}) (f_{n,\eps} - g_{n})}_{L^{5}(D_{\eps})}, \\
\norm{\nabla (G_{n,\eps} - g_{n})}_{L^{5}} & = \norm{(g_{n} - f_{n,\eps}) \nabla \zeta_{\eps}  + (1-\zeta_{\eps}) \nabla g_{n} + (1-\zeta_{\eps}) \nabla f_{n,\eps}}_{L^{5}}.
\end{align*}
Using that $g_{n}, f_{n,\eps}$ are smooth functions on $\overline{\Omega}$ and that the Lebesgue measure of $D_{\eps}$ tends to zero as $\eps \to 0$ we have the strong convergence of $G_{n,\eps}$ to $g_{n}$ in $L^{5}$.  For the difference in the gradients, we use that $\zeta_{\eps} \to 1$ a.e. in $\Omega$, Lebesgue's dominated convergence theorem and the boundedness of $\nabla g_{n}$ and $\nabla f_{n,\eps}$ to deduce that
\begin{align*}
\norm{(1-\zeta_{\eps}) \nabla g_{n}}_{L^{5}} + \norm{(1-\zeta_{\eps}) \nabla f_{n,\eps}}_{L^{5}} \to 0 \text{ as } \eps \to 0.
\end{align*}
For the remaining term $\norm{ (g_{n} - f_{n,\eps}) \nabla \zeta_{\eps} }_{L^{5}}$ we use
that the support of $\nabla \zeta_{\eps}$ lies in $D_{\eps} \setminus \overline{D_{\frac{\eps}{2}}}$ and for any $z \in D_{\eps} \setminus \overline{D_{\frac{\eps}{2}}}$,
\begin{align*}
& \abs{f_{n,\eps}(z) - g_{n}(z)} = \abs{g_{n}(y) - g_{n}(y + \text{dist}(z, \pd \Omega) \bm{n}(y))} \\
& \quad \leq \int_{0}^{\text{dist}(z, \pd \Omega)} \abs{\nabla g_{n}(y + \xi \bm{n}(y))} \dd \xi \leq \norm{\nabla g_{n}}_{L^{\infty}} \text{\dist}(z, \pd \Omega) \leq C \eps.
\end{align*}
That is, $f_{n,\eps}$ converges uniformly to $g_{n}$ in $D_{\eps} \setminus \overline{D_{\frac{\eps}{2}}}$.  Furthermore, using $\norm{\nabla \zeta_{\eps}}_{L^{\infty}} \leq \frac{C}{\eps}$ in $D_{\eps} \setminus \overline{D_{\frac{\eps}{2}}}$ and $\abs{D_{\eps} \setminus \overline{D_{\frac{\eps}{2}}}} \leq C \eps$ we obtain $\norm{(g_{n} - f_{\eps,n}) \nabla \zeta_{\eps}}_{L^{5}} \leq C \eps^{\frac{1}{5}} \to 0$ as $\eps \to 0$.  This shows that $G_{n,\eps}$ converges strongly to $g_{n}$ in $W^{1,5}$.

\end{proof}

We denote
\begin{align*}
W_{k} := \mathrm{span} \{ w_{1}, \dots, w_{k} \}
\end{align*}
as the finite dimensional space spanned by the first $k$ basis functions and consider
\begin{subequations}\label{Galerkin:ansatz}
\begin{align}
\varphi_{k}(t,x) =\sum_{i=1}^{k} \alpha_{i}^{k}(t) w_{i}(x), \; \mu_{k}(t,x) = \sum_{i=1}^{k} \beta_{i}^{k}(t) w_{i}(x), \; \sigma_{k}(t,x) = \sum_{i=1}^{k} \gamma_{i}^{k}(t) w_{i}(x),
\end{align}
\end{subequations}
and the following Galerkin ansatz:  For $1 \leq j \leq k$,
\begin{subequations}\label{Galerkin:system}
\begin{align}\displaybreak[3]
\int_{\Omega} \pd_{t} \varphi_{k} w_{j} \dx & = \int_{\Omega} - m(\varphi_{k}) \nabla \mu_{k} \cdot \nabla w_{j} + \Gamma_{\varphi, k} w_{j} + \varphi_{k} \bm{v}_{k} \cdot \nabla w_{j} dx, \label{Galerkin:varphi} \\
\int_{\Omega} \mu_{k} w_{j} \dx & = \int_{\Omega} A \Psi'(\varphi_{k}) w_{j} + B \nabla \varphi_{k} \cdot \nabla w_{j} - \chi \sigma_{k} w_{j} \dx,  \label{Galerkin:mu} \\
\int_{\Omega} \pd_{t} \sigma_{k} w_{j} \dx & = \int_{\Omega} -n(\varphi_{k}) (D \nabla \sigma_{k} - \chi \nabla \varphi_{k}) \cdot \nabla w_{j} - \mathcal{S}_{k} w_{j} + \sigma_{k} \bm{v}_{k} \cdot \nabla w_{j} \dx  \label{Galerkin:sigma} \\
\notag & + \int_{\pd \Omega} b ( \sigma_{\infty} - \sigma_{k}) w_{j} \dH,
\end{align}
\end{subequations}
where we define the Galerkin ansatz for the pressure $p_{k}$ and the velocity field $\bm{v}_{k}$ by
\begin{align}
p_{k} & = (-\Laplace_{N})^{-1} \left ( \frac{1}{K} \Gamma_{\bm{v}} -\div ((\mu_{k} + \chi \sigma_{k}) \nabla \varphi_{k}) \right ), \label{Galerkin:pressure} \\
\bm{v}_{k} & = - K (\nabla p_{k} - (\mu_{k} + \chi \sigma_{k}) \nabla \varphi_{k}),\label{Galerkin:velocity}
\end{align}
and we set
\begin{align*}
\Gamma_{\varphi, k} := \Gamma_{\varphi}(\varphi_{k}, \mu_{k}, \sigma_{k}), \quad \mathcal{S}_{k} := \mathcal{S}(\varphi_{k}, \mu_{k}, \sigma_{k}).
\end{align*}
Note that in \eqref{Galerkin:pressure}, the properties $\Gamma_{\bm{v}} \in L^{2}_{0}$ and $\nabla \varphi_{k} \cdot \bm{n} = 0$ on $\pd \Omega$ show that the term inside the bracket belongs to $L^{2}_{0}$ and hence $p_{k}$ is well-defined.  Let $\bm{M}$ and $\bm{S}$ denote the following mass and stiffness matrices, respectively:  For $1 \leq i,j \leq k$,
\begin{align*}
\bm{M}_{ij} = \int_{\Omega} w_{i} w_{j} \dx, \quad \bm{S}_{ij} := \int_{\Omega} \nabla w_{i} \cdot \nabla w_{j} \dx.
\end{align*}
Thanks to the orthonormality of $\{w_{i}\}_{i \in \N}$ in $L^{2}$, we see that $\bm{M}$ is the identity matrix.  It is convenient to define the following matrices with components
\begin{alignat*}{3}
(\bm{C}^{k})_{ji} := \int_{\Omega} w_{i} \bm{v}_{k} \cdot \nabla w_{j}  \dx, \quad (\bm{M}_{\pd \Omega})_{ji}  := \int_{\pd \Omega}  w_{i} w_{j} \dH, \\
(\bm{S}_{m}^{k})_{ji} := \int_{\Omega} m(\varphi_{k}) \nabla w_{i} \cdot \nabla w_{j} \dx, \quad (\bm{S}_{n}^{k})_{ji} := \int_{\Omega} n(\varphi_{k}) \nabla w_{i} \cdot \nabla w_{j} \dx,
\end{alignat*}
for $1 \leq i, j \leq k$.  Furthermore, we introduce the notation
\begin{align*}
R_{\varphi,j}^{k} := \int_{\Omega} \Gamma_{\varphi,k} w_{j} \dx ,  \quad  R_{S,j}^{k} := \int_{\Omega} \mathcal{S}_{k} w_{j} \dx, \\
\psi_{j}^{k} := \int_{\Omega} \Psi'(\varphi_{k})w_{j} \dx, \quad \Sigma_{j}^{k} := \int_{\pd \Omega} \sigma_{\infty} w_{j} \dH,
\end{align*}
for $1 \leq i,j \leq k$, and denote
\begin{align*}
\bm{R}_{\varphi}^{k} := (R_{\varphi,1}^{k}, \dots, R_{\varphi,k}^{k})^{\top}, \quad \bm{R}_{S}^{k} := (R_{S,1}^{k}, \dots, R_{S,k}^{k})^{\top}, \\
\bm{\psi}^{k} := (\psi_{1}^{k}, \dots, \psi_{k}^{k})^{\top}, \quad  \bm{\Sigma}^{k} := (\Sigma_{1}^{k}, \dots, \Sigma_{k}^{k})^{\top},
\end{align*}
as the corresponding vectors, so that we obtain an initial value problem for a system of equations for $\bm{\alpha}_{k} := (\alpha_{1}^{k}, \dots \alpha_{k}^{k})^{\top}$, $\bm{\beta}_{k} := (\beta_{1}^{k}, \dots, \beta_{k}^{k})^{\top}$, and $\bm{\gamma}_{k} := (\gamma_{1}^{k}, \dots, \gamma_{k}^{k})^{\top}$ as follows,
\begin{subequations}\label{discrete:system1}
\begin{align}
\frac{\dd}{\dt} \bm{\alpha}_{k} & = - \bm{S}_{m}^{k} \bm{\beta}_{k} + \bm{R}_{\varphi}^{k} + \bm{C}^{k} \bm{\alpha}_{k}, \label{discrete:varphi} \\
 \bm{\beta}_{k} & = A \bm{\psi}^{k} + B\bm{S} \bm{\alpha}_{k} - \chi  \bm{\gamma}_{k}, \label{discrete:mu} \\
 \frac{\dd}{\dt} \bm{\gamma}_{k} & = - \bm{S}_{n}^{k}(D \bm{\gamma}_{k} - \chi \bm{\alpha}_{k}) - \bm{R}_{S}^{k} + \bm{C}^{k} \bm{\gamma}_{k} - b\bm{M}_{\pd \Omega}\bm{\gamma}_{k} + b \bm{\Sigma}^{k}, \label{discrete:sigma} \\
 p_{k} & = \left (-\Laplace_{N} \right)^{-1} \left ( \frac{1}{K} \Gamma_{\bm{v}} - \div (( \mu_{k} + \chi \sigma_{k}) \nabla \varphi_{k}) \right ) \label{discrete:pressure}, \\
 \bm{v}_{k} & =  -K (\nabla p_{k} - (\mu_{k} + \chi \sigma_{k}) \nabla \varphi_{k}) , \label{discrete:velo}
\end{align}
\end{subequations}
and we supplement \eqref{discrete:system1} with the initial conditions
\begin{align}\label{discrete:system:initial}
(\bm{\alpha}_{k})_{j}(0) = \int_{\Omega} \varphi_{0} w_{j} \dx, \quad (\bm{\gamma}_{k})_{j}(0) = \int_{\Omega} \sigma_{0} w_{j} \dx,
\end{align}
for $1 \leq j \leq k$, which satisfy
\begin{align}\label{initialcond:bdd}
\bignorm{\sum_{i=1}^{k} (\bm{\alpha}_{k})_{i}(0) w_{i}}_{H^{1}} \leq C\norm{\varphi_{0}}_{H^{1}}, \quad \bignorm{\sum_{j=1}^{k} (\bm{\gamma}_{k})_{i}(0) w_{i}}_{L^{2}} \leq \norm{\sigma_{0}}_{L^{2}} \quad \forall k \in \N,
\end{align}
for some constant $C$ not depending on $k$.

We can substitute \eqref{discrete:mu}, \eqref{discrete:pressure} and \eqref{discrete:velo} into \eqref{discrete:varphi} and \eqref{discrete:sigma}, and obtain a coupled system of ordinary differential equations for $\bm{\alpha}_{k}$ and $\bm{\gamma}_{k}$, where $\bm{S}_{m}^{k}$, $\bm{C}^{k}$ and $\bm{S}_{n}^{k}$ depend on the solutions $\bm{\alpha}_{k}$ and $\bm{\gamma}_{k}$ in a non-linear manner.  Continuity of $m(\cdot)$, $n(\cdot)$, $\Psi'(\cdot)$ and the source terms, and the stability of $(-\Laplace_{N})^{-1}$ under perturbations imply that the right-hand sides of \eqref{discrete:system1} depend continuously on $(\bm{\alpha}_{k}, \bm{\gamma}_{k})$.   Thus, we can appeal to the theory of ODEs (via the Cauchy--Peano theorem \cite[Chap. 1, Thm. 1.2]{book:Coddington}) to infer that the initial value problem \eqref{discrete:system1}-\eqref{discrete:system:initial} has at least one local solution pair $(\bm{\alpha}_{k}, \bm{\gamma}_{k})$ defined on $[0,t_{k}]$ for each $k \in \N$.

We may define $\bm{\beta}_{k}$ via the relation \eqref{discrete:mu} and hence the Galerkin ansatz $\varphi_{k}, \mu_{k}$ and $\sigma_{k}$ can be constructed from \eqref{Galerkin:ansatz}.  Then, we can define $p_{k}$ and $\bm{v}_{k}$ via \eqref{Galerkin:pressure} and \eqref{Galerkin:velocity}, respectively.  Furthermore, as the basis function $w_{j}$ belongs to $H^{2}$ for each $j \in \N$, by the Sobolev embedding $H^{2} \subset L^{\infty}$, we obtain that $\div (w_{i} \nabla w_{j}) \in L^{2}$ for $i,j \in \N$ and hence the function $\div ((\mu_{k} + \chi \sigma_{k}) \nabla \varphi_{k})$ belongs to $L^{2}$.  Then, by elliptic regularity theory, we find that $p_{k}(t) \in H^{2}_{N} \cap L^{2}_{0}$ for all $t \in [0,t_{k}]$.  This in turn implies that
\begin{align}\label{Galerkinvkspace}
\bm{v}_{k}(t) \in \{ \bm{f} \in \bm{H}^{1} : \div \bm{f} = \Gamma_{\bm{v}}, \; \bm{f} \cdot \bm{n} = 0 \text{ on } \pd \Omega \} \text{ for all } t \in [0,t_{k}].
\end{align}
Next, we show that the Galerkin ansatz can be extended to the interval $[0,T]$ using a priori estimates.

\section{A priori estimates}\label{sec:apriori}
In this section, the positive constants $C$ are independent of $k$, $\Gamma_{\bm{v}}$, $K$, $b$ and $\chi$, and may change from line to line.  We will denote positive constants that are uniformly bounded for $b, \chi \in (0,1]$ and are also uniformly bounded for $K \in (0,1]$ when $\Gamma_{\bm{v}} = 0$ by the symbol $\mathcal{E}$.

We first state the energy identity satisfied by the Galerkin ansatz.
Let $\delta_{ij}$ denote the Kronecker delta.  Multiplying \eqref{Galerkin:varphi} with $\beta_{j}^{k}$, \eqref{Galerkin:mu} with $\frac{\dd}{\dt} \alpha_{j}^{k}$, \eqref{Galerkin:sigma} with $N_{,\sigma}^{k} := D \gamma_{j}^{k} + \chi (\delta_{1j} - \alpha_{j}^{k})$, and then summing the product from $j = 1$ to $k$ lead to
\begin{subequations}
\begin{align*}
\int_{\Omega} \pd_{t}\varphi_{k} \mu_{k} \dx & = \int_{\Omega} - m(\varphi_{k}) \abs{\nabla \mu_{k}}^{2} + \Gamma_{\varphi,k} \mu_{k} +  \varphi_{k} \bm{v}_{k} \cdot \nabla \mu_{k} \dx , \\
\int_{\Omega} \mu_{k} \pd_{t}\varphi_{k} \dx & = \frac{\dd}{\dt}\int_{\Omega} A \Psi'(\varphi_{k}) + \frac{B}{2} \abs{\nabla \varphi_{k}}^{2} \dx - \int_{\Omega} \chi \sigma_{k} \pd_{t}\varphi_{k} \dx, \\
\int_{\Omega} \pd_{t}\sigma_{k} N_{,\sigma}^{k} \dx & = \int_{\Omega} -n(\varphi_{k}) \abs{\nabla N_{,\sigma}^{k}}^{2} - \mathcal{S}_{k} N_{,\sigma}^{k} +  \sigma_{k} \bm{v}_{k} \cdot \nabla N_{,\sigma}^{k} \dx \\
\notag & + \int_{\pd \Omega} b (\sigma_{\infty} - \sigma_{k}) N_{,\sigma}^{k} \dH.
\end{align*}
\end{subequations}
Here, we used that $w_{1} = 1$ and $\nabla w_{1} = \bm{0}$.  Then, summing the three equations leads to
\begin{equation}\label{apriorienergy1}
\begin{aligned}
& \frac{\dd}{\dt} \int_{\Omega}  A \Psi(\varphi_{k}) + \frac{B}{2} \abs{\nabla \varphi_{k}}^{2} + N(\varphi_{k}, \sigma_{k}) \dx \\
& \quad +  \int_{\Omega} m(\varphi_{k}) \abs{\nabla \mu_{k}}^{2} + n(\varphi_{k}) \abs{\nabla N_{,\sigma}^{k}}^{2} \dx + \int_{\pd \Omega} Db \abs{\sigma_{k}}^{2} \dH \\
& \quad =  \int_{\Omega} \Gamma_{\varphi,k} \mu_{k} - \mathcal{S}_{k} N_{,\sigma}^{k} + (\varphi_{k} \bm{v}_{k} \cdot \nabla \mu_{k} + \sigma_{k} \bm{v}_{k} \cdot \nabla N_{,\sigma}^{k}) \dx \\
& \quad +  \int_{\pd \Omega}  b (\sigma_{\infty} N_{,\sigma}^{k} - \sigma_{k} \chi (1-\varphi_{k}))\dH.
\end{aligned}
\end{equation}
Next, multiplying \eqref{Galerkin:velocity} with $\frac{1}{K} \bm{v}_{k}$, integrating over $\Omega$ and integrating by parts gives
\begin{align*}
\int_{\Omega} \frac{1}{K} \abs{\bm{v}_{k}}^{2} \dx & = \int_{\Omega} -\nabla p_{k} \cdot \bm{v}_{k} +(\mu_{k} + \chi \sigma_{k}) \nabla \varphi_{k} \cdot \bm{v}_{k} \dx \\
& = \int_{\Omega} \Gamma_{\bm{v}} p_{k} + (\mu_{k} + \chi \sigma_{k}) \nabla \varphi_{k} \cdot \bm{v}_{k} \dx,
\end{align*}
where we used that $\div \bm{v}_{k} = \Gamma_{\bm{v}}$ and $\bm{v}_{k} \cdot \bm{n} = 0$ on $\pd \Omega$.  Similarly, we see that
\begin{align*}
\int_{\Omega} (\varphi_{k} \nabla \mu_{k} + \sigma_{k} \nabla N_{,\sigma}^{k} ) \cdot \bm{v}_{k} \dx & = \int_{\Omega} \varphi_{k} \bm{v}_{k} \cdot \nabla \mu_{k} + \sigma_{k} \bm{v}_{k} \cdot \nabla (D \sigma_{k} + \chi (1-\varphi_{k})) \dx \\
& =  -\int_{\Omega} \varphi_{k} \Gamma_{\bm{v}} \mu_{k} + (\mu_{k} + \chi \sigma_{k}) \bm{v}_{k} \cdot \nabla \varphi_{k} - \tfrac{D}{2} \bm{v}_{k} \cdot \nabla \abs{\sigma_{k}}^{2} \dx \\
& = - \int_{\Omega} \Gamma_{\bm{v}} \left ( \varphi_{k} \mu_{k} + \tfrac{D}{2} \abs{\sigma_{k}}^{2} \right ) + (\mu_{k} + \chi \sigma_{k}) \nabla \varphi_{k} \cdot \bm{v}_{k} \dx.
\end{align*}
In particular, we have
\begin{equation}\label{apriorienergy2}
\begin{aligned}
\int_{\Omega} \frac{1}{K} \abs{\bm{v}_{k}}^{2} \dx = \int_{\Omega}  \Gamma_{\bm{v}} \left ( p_{k} - \mu_{k} \varphi_{k} - \frac{D}{2} \abs{\sigma_{k}}^{2} \right ) - (\varphi_{k} \nabla \mu_{k} + \sigma_{k} \nabla N_{,\sigma}^{k}) \cdot \bm{v}_{k} \dx.
\end{aligned}
\end{equation}
Adding \eqref{apriorienergy2} to \eqref{apriorienergy1} leads to
\begin{equation}\label{apriorienergy:identity}
\begin{aligned}
& \frac{\dd}{\dt} \int_{\Omega} A \Psi(\varphi_{k}) + \frac{B}{2} \abs{\nabla \varphi_{k}}^{2} + N(\varphi_{k}, \sigma_{k}) \dx \\
& \quad + \int_{\Omega} m(\varphi_{k}) \abs{\nabla \mu_{k}}^{2} + n(\varphi_{k}) \abs{\nabla N_{,\sigma}^{k}}^{2} + \frac{1}{K} \abs{\bm{v}_{k}}^{2} \dx + \int_{\pd \Omega} Db \abs{\sigma_{k}}^{2} \dH \\
& \quad =  \int_{\Omega} \Gamma_{\varphi,k} \mu_{k} - \mathcal{S}_{k} N_{,\sigma}^{k} + \Gamma_{\bm{v}} \left ( p_{k} - \mu_{k} \varphi_{k} - \frac{D}{2} \abs{\sigma_{k}}^{2} \right ) \dx \\
& \quad + \int_{\pd \Omega} b(\sigma_{\infty} (D \sigma_{k} + \chi (1-\varphi_{k})) - \sigma_{k} \chi(1-\varphi_{k})) \dH.
\end{aligned}
\end{equation}
To derive the first a priori estimate for the Galerkin ansatz, it suffices to bring \eqref{apriorienergy:identity} into a form where we can apply Gronwall's inequality.  We start with estimating the boundary term on the right-hand side of \eqref{apriorienergy:identity}.  By H\"{o}lder's inequality and Young's inequality,
\begin{align*}
& \abs{\int_{\pd \Omega} b (\sigma_{\infty} (D \sigma_{k}  + \chi (1-\varphi_{k})) - \sigma_{k} \chi(1-\varphi_{k})) \dH} \\
& \quad \leq b \left ( \norm{\sigma_{\infty}}_{L^{2}(\pd \Omega)} \norm{D \sigma_{k} + \chi (1-\varphi_{k})}_{L^{2}(\pd \Omega)} + \chi \norm{\sigma_{k}}_{L^{2}(\pd \Omega)} \left ( \abs{\pd \Omega}^{\frac{1}{2}} + \norm{\varphi_{k}}_{L^{2}(\pd \Omega)} \right ) \right ) \\
& \quad \leq \frac{Db}{2} \norm{\sigma_{k}}_{L^{2}(\pd \Omega)}^{2} + b \left (1 + \frac{\chi^{2}}{D} \right ) \norm{\varphi_{k}}_{L^{2}(\pd \Omega)}^{2} + bC \left (\chi + (1 + \chi^{2})\norm{\sigma_{\infty}}_{L^{2}(\pd \Omega)}^{2} \right ).
\end{align*}
By the trace theorem and the growth condition \eqref{assump:lowerbddPsi}, we have
\begin{equation}\label{varphiL2Gammanorm}
\begin{aligned}
\norm{\varphi}_{L^{2}(\pd \Omega)}^{2} & \leq C_{\mathrm{tr}}^{2} \left (\norm{\varphi}_{L^{2}}^{2} + \norm{\nabla \varphi}_{\bm{L}^{2}}^{2} \right )\\
& \leq C_{\mathrm{tr}}^{2} \left (\frac{1}{R_{1}} \norm{\Psi(\varphi)}_{L^{1}} + \norm{\nabla \varphi}_{\bm{L}^{2}}^{2} \right ) + C(R_{2}, \abs{\Omega}, C_{\mathrm{tr}}),
\end{aligned}
\end{equation}
where the positive constant $C_{\mathrm{tr}}$ from the trace theorem only depends on $\Omega$, and so
\begin{equation}\label{boundryintegralRHS}
\begin{aligned}
& \abs{\int_{\pd \Omega} b (\sigma_{\infty} (D \sigma_{k}  + \chi (1-\varphi_{k})) - \sigma_{k} \chi(1-\varphi_{k})) \dH} \\
& \quad \leq \frac{Db}{2} \norm{\sigma_{k}}_{L^{2}(\pd \Omega)}^{2} + C b \left (1 + \chi^{2} \right )  \left ( \norm{\Psi(\varphi_{k})}_{L^{1}} + \norm{\nabla \varphi_{k}}_{\bm{L}^{2}}^{2} \right )\\
& \quad + C b \left (1 + \chi^{2} \right ) + bC \left ( \chi + (1 + \chi^{2})\norm{\sigma_{\infty}}_{L^{2}(\pd \Omega)}^{2} \right ).
\end{aligned}
\end{equation}

\subsection{Estimation of the source terms}
For the source term
\begin{align*}
\int_{\Omega} \Gamma_{\varphi,k} \mu_{k} - \mathcal{S}_{k} N_{,\sigma}^{k} + \Gamma_{\bm{v}} \left ( p_{k} - \mu_{k} \varphi_{k} - \frac{D}{2} \abs{\sigma_{k}}^{2} \right ) \dx
\end{align*}
that appears on the right-hand side of \eqref{apriorienergy:identity} we will divide its analysis into two parts.  We first analyse the part involving $\Gamma_{\bm{v}}$, which will involve a closer look at the Darcy subsystem to deduce an estimate on $\norm{p_{k}}_{L^{2}}$.  For the remainder $\Gamma_{\varphi,k} \mu_{k} - \mathcal{S}_{k} N_{,\sigma}^{k}$ term we will estimate it differently based on the assumptions on $\Theta_{\varphi}$.

\subsubsection{Pressure estimates}\label{sec:pressureest}
Before we estimate the source terms involving $\Gamma_{\bm{v}}$, we look at the Darcy subsystem, which can be expressed as an elliptic equation for the pressure (we will drop the subscript $k$ for clarity)
\begin{subequations}\label{pressure:sys}
\begin{alignat}{3}
-\Laplace p & = \frac{1}{K} \Gamma_{\bm{v}} - \div ( (\mu + \chi \sigma) \nabla \varphi) && \text{ in } \Omega, \text{ with } \mean{p} = 0, \\
\pdnu p & = 0 && \text{ on } \pd \Omega.
\end{alignat}
\end{subequations}
The following lemma is similar to \cite[Lem. 3.1]{preprint:JiangWuZheng14}, and the hypothesis is fulfilled by the Galerkin ansatz.

\begin{lemma}\label{lem:pressure:est}
Let $\Omega \subset \R^{3}$ be a bounded domain with $C^{3}$-boundary.  Given $\varphi \in H^{2}_{N}$, $\mu, \sigma \in H^{1}$, the source term $\Gamma_{\bm{v}} \in L^{2}_{0}$, and the function $p$ satisfying the above elliptic equation \eqref{pressure:sys}.  Then, the following estimate hold
\begin{equation}\label{pressure:L2:est}
\begin{aligned}
\norm{p}_{L^{2}} \leq \frac{C}{K} \norm{\Gamma_{\bm{v}}}_{L^{2}} + C \left (\norm{\nabla \mu}_{\bm{L}^{2}} + \chi \norm{\sigma}_{L^{6}} \right ) \norm{\nabla \varphi}_{\bm{L}^{\frac{3}{2}}} + C \mean{\mu} \norm{\nabla \varphi}_{\bm{L}^{2}},
\end{aligned}
\end{equation}
for some positive constant $C$ depending only on $\Omega$.
\end{lemma}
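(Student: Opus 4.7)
My plan is to estimate $\|p\|_{L^2}$ by a duality argument against test functions produced by the inverse Neumann Laplacian, and then split $\mu$ into its mean and its mean-zero part in order to isolate the $\overline{\mu} \|\nabla \varphi\|_{\bm{L}^2}$ contribution from the gradient-only bound on $\mu - \overline{\mu}$.

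First, since $\overline{p} = 0$, I would write
\[
\|p\|_{L^2} = \sup \Bigl\{ \int_\Omega p g \dx : g \in L^2_0,\ \|g\|_{L^2} \le 1 \Bigr\}.
\]
For such $g$, let $\psi := (-\Delta_N)^{-1} g \in H^2_N \cap L^2_0$; elliptic regularity on the $C^3$-domain $\Omega$ yields $\|\psi\|_{H^2} \leq C \|g\|_{L^2}$. Then $-\Delta \psi = g$, $\pdnu \psi = 0$ on $\pd\Omega$, so testing the PDE for $p$ against $\psi$ and integrating by parts (using $\pdnu p = 0$ for the left-hand side, and $\pdnu \varphi = 0$ since $\varphi \in H^2_N$ to kill the boundary term coming from $\div((\mu+\chi\sigma)\nabla\varphi)$) gives
\[
\int_\Omega p g \dx = \int_\Omega \frac{1}{K} \Gamma_{\bm{v}} \psi + (\mu + \chi \sigma) \nabla \varphi \cdot \nabla \psi \dx.
\]

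Next I would bound each term. The $\Gamma_{\bm{v}}$ piece is immediate by Cauchy--Schwarz: $|\int \tfrac{1}{K} \Gamma_{\bm{v}} \psi| \leq \tfrac{C}{K} \|\Gamma_{\bm{v}}\|_{L^2} \|g\|_{L^2}$. For the $\chi \sigma$ piece I would apply Hölder with exponents $(6, \tfrac{3}{2}, 6)$ and use $H^1 \hookrightarrow L^6$, $H^2 \hookrightarrow W^{1,6}$ in three dimensions:
\[
\Bigl|\int_\Omega \chi \sigma \nabla \varphi \cdot \nabla \psi \dx \Bigr| \leq \chi \|\sigma\|_{L^6} \|\nabla \varphi\|_{\bm{L}^{3/2}} \|\nabla \psi\|_{\bm{L}^6} \leq C \chi \|\sigma\|_{L^6} \|\nabla \varphi\|_{\bm{L}^{3/2}} \|g\|_{L^2}.
\]
The main subtlety is the $\mu$ term: a direct Sobolev bound $\|\mu\|_{L^6} \leq C \|\mu\|_{H^1}$ would generate a full $H^1$-norm on the right, which is not what is claimed. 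To get only $\|\nabla \mu\|_{\bm{L}^2}$, I would split $\mu = (\mu - \overline{\mu}) + \overline{\mu}$. For the mean-zero part, Poincaré \eqref{regular:Poincare} and $H^1 \hookrightarrow L^6$ give $\|\mu - \overline{\mu}\|_{L^6} \leq C \|\nabla \mu\|_{\bm{L}^2}$, so again Hölder with $(6, \tfrac{3}{2}, 6)$ yields the $C \|\nabla \mu\|_{\bm{L}^2} \|\nabla \varphi\|_{\bm{L}^{3/2}} \|g\|_{L^2}$ bound. For the constant part,
\[
\Bigl| \int_\Omega \overline{\mu} \nabla \varphi \cdot \nabla \psi \dx \Bigr| \leq \overline{\mu} \|\nabla \varphi\|_{\bm{L}^2} \|\nabla \psi\|_{\bm{L}^2} \leq C \overline{\mu} \|\nabla \varphi\|_{\bm{L}^2} \|g\|_{L^2}.
\]

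Finally, summing the four bounds and taking the supremum over admissible $g$ yields \eqref{pressure:L2:est}. The only real obstacle is the decomposition of $\mu$, which is what forces the appearance of the $\overline{\mu}\|\nabla \varphi\|_{\bm{L}^2}$ term separately from the $\|\nabla \mu\|_{\bm{L}^2} \|\nabla \varphi\|_{\bm{L}^{3/2}}$ one; everything else is Hölder plus standard Sobolev/elliptic regularity in three dimensions.
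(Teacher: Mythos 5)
Your proof is correct and takes essentially the same route as the paper's: both exploit the $H^{2}$ elliptic regularity of $(-\Laplace_{N})^{-1}$ to place the gradient of the dual test function in $L^{6}$ so that H\"older only needs $\nabla\varphi \in \bm{L}^{3/2}$, and both split $\mu$ into its mean and mean-zero parts so that the Poincar\'e inequality converts $\norm{\mu - \overline{\mu}}_{L^{6}}$ into $\norm{\nabla \mu}_{\bm{L}^{2}}$. The only difference is presentational: the paper decomposes $p$ into three explicit inverse-Laplacian pieces and, for the middle piece $h := (-\Laplace_{N})^{-1}\div((\mu - \overline{\mu}+\chi\sigma)\nabla\varphi)$, introduces an auxiliary $f := (-\Laplace_{N})^{-1}h$ and tests the weak form for $h$ against $f$, whereas you run a single unified duality $\norm{p}_{L^{2}} = \sup_{g}\int_{\Omega} p\, g \dx$ and test once against $\psi := (-\Laplace_{N})^{-1}g$; the paper's $f$ is precisely your $\psi$ specialized to $g = h$, so the two are the same computation packaged in a different order.
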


\begin{proof}
We first recall some properties of the inverse Neumann-Laplacian operator.  Suppose for $g \in L^{2}_{0}$, $f = (-\Laplace_{N})^{-1} g \in H^{1} \cap L^{2}_{0}$ solves
\begin{align}\label{elliptic:example}
-\Laplace f = g \text{ in } \Omega, \quad \pdnu f = 0 \text{ on } \pd \Omega.
\end{align}
Then, testing with $f$ and integrating over $\Omega$, applying integration by parts and the Poincar\'{e} inequality \eqref{regular:Poincare} leads to
\begin{align}\label{NeumannEll:basic:Est}
\norm{\left ( - \Laplace_{N} \right)^{-1} g}_{H^{1}} = \norm{f}_{H^{1}} \leq c \norm{\nabla f}_{\bm{L}^{2}} \leq C \norm{g}_{L^{2}},
\end{align}
for positive constants $c$ and $C$ depending only on $C_{p}$.  Elliptic regularity theory then gives that $f \in H^{2}_{N}$ with
\begin{align}\label{NeumannEllipticH2}
\norm{f}_{H^{2}} \leq C \left (\norm{f}_{H^{1}} + \norm{g}_{L^{2}} \right ) \leq C \norm{g}_{L^{2}},
\end{align}
with a positive constant $C$ depending only on $\Omega$.  Returning to the pressure system, we observe from \eqref{pressure:nonlocal:defn} and the above that
\begin{equation}\label{pressure:sys:L2:estimate}
\begin{aligned}
 \norm{p}_{L^{2}} & \leq \frac{1}{K} \norm{\left ( -\Laplace_{N} \right)^{-1} \Gamma_{\bm{v}}}_{L^{2}} + \norm{ \left ( -\Laplace_{N} \right )^{-1} \left ( \div (( \mu + \chi \sigma) \nabla \varphi) \right ) }_{L^{2}} \\
& \leq \frac{C}{K} \norm{\Gamma_{\bm{v}}}_{L^{2}}  + \norm{ \left ( -\Laplace_{N} \right )^{-1} \left ( \div (( \mu - \mean{\mu} + \chi \sigma) \nabla \varphi) \right ) }_{L^{2}} \\
& + \norm{\left (-\Laplace_{N} \right)^{-1} (\div (\mean{\mu} \,  \nabla \varphi))}_{L^{2}},
\end{aligned}
\end{equation}
for some positive constant $C$ depending only on $C_{p}$.  Note that the third term on the right-hand side can be estimated as
\begin{align}\label{pressureterm:est:simplification}
\mean{\mu} \norm{ \left ( -\Laplace_{N} \right)^{-1} \div \nabla (\varphi - \mean{\varphi})}_{L^{2}} = \mean{\mu} \norm{\varphi - \mean{\varphi}}_{L^{2}}\leq C_{p} \mean{\mu} \norm{\nabla \varphi}_{\bm{L}^{2}}.
\end{align}
We now consider estimating the second term on the right-hand side of \eqref{pressure:sys:L2:estimate}.  By assumption $\mu, \sigma \in H^{1}$ and $\varphi \in H^{2}_{N}$, we have that
\begin{align}\label{auxiliary:pressure:RHS:L2}
\norm{(\mu - \overline{\mu} + \chi \sigma)\nabla \varphi}_{\bm{L}^{2}} \leq \norm{\mu - \overline{\mu} + \chi \sigma}_{L^{6}} \norm{\nabla \varphi}_{\bm{L}^{3}},
\end{align}
and so if we consider the function $h := (-\Laplace_{N})^{-1} (\div ((\mu - \mean{\mu} + \chi \sigma) \nabla \varphi))$, then we obtain that
\begin{align}\label{auxiliary:pressure:weak}
\int_{\Omega} \nabla h \cdot \nabla \zeta \dx = \int_{\Omega} - (\mu - \overline{\mu} + \chi \sigma) \nabla \varphi \cdot \nabla \zeta \dx \quad \forall \zeta \in H^{1}
\end{align}
must hold, and by \eqref{auxiliary:pressure:RHS:L2} and the Poincar\'{e} inequality \eqref{regular:Poincare} with zero mean  it holds that $h \in H^{1} \cap L^{2}_{0}$.  We now define $f:= (-\Laplace_{N})^{-1}(h) \in H^{2}_{N}$, and consider testing with $\zeta = f$ in \eqref{auxiliary:pressure:weak}, leading to
\begin{align*}
\int_{\Omega} \abs{h}^{2} \dx = \int_{\Omega} \nabla h \cdot \nabla f \dx =  \int_{\Omega} -(\mu - \overline{\mu} + \chi \sigma) \nabla \varphi \cdot \nabla f \dx.
\end{align*}
Since $f \in H^{2}_{N}$, elliptic regularity theory and H\"{o}lder's inequality gives
\begin{align*}
\norm{h}_{L^{2}}^{2}  & \leq \norm{(\mu  - \overline{\mu} + \chi \sigma) \nabla \varphi}_{\bm{L}^{\frac{6}{5}}} \norm{\nabla f}_{\bm{L}^{6}} \leq C \norm{(\mu  - \overline{\mu} + \chi \sigma) \nabla \varphi}_{\bm{L}^{\frac{6}{5}}} \norm{f}_{H^{2}} \\
& \leq C\norm{(\mu  - \overline{\mu} + \chi \sigma) \nabla \varphi}_{\bm{L}^{\frac{6}{5}}} \norm{h}_{L^{2}},
\end{align*}
where the constant $C$ depends on $\Omega$ and the constant in \eqref{NeumannEllipticH2}.  Thus we obtain
\begin{equation}\label{InvLaplDiv:L2:L6/5}
\begin{aligned}
\norm{ \left ( -\Laplace_{N} \right )^{-1} \left ( \div (( \mu - \mean{\mu} + \chi \sigma) \nabla \varphi) \right ) }_{L^{2}} & \leq C \norm{(\mu  - \overline{\mu} + \chi \sigma) \nabla \varphi}_{\bm{L}^{\frac{6}{5}}} \\
& \leq C \left ( \norm{\mu - \mean{\mu}}_{L^{6}} + \chi \norm{\sigma}_{L^{6}} \right ) \norm{\nabla \varphi}_{\bm{L}^{\frac{3}{2}}}
\end{aligned}
\end{equation}
for some constant $C$ depending only on $\Omega$.  By the Sobolev embedding $H^{1} \subset L^{6}$ (with constant $C_{\mathrm{Sob}}$ that depends only on $\Omega$) and the Poincar\'{e} inequality, we find that
\begin{align}\label{muL6Poin}
\norm{\mu - \mean{\mu}}_{L^{6}} \leq C_{\mathrm{Sob}} \norm{\mu - \mean{\mu}}_{H^{1}} \leq c(C_{\mathrm{Sob}}, C_{p}) \norm{\nabla \mu}_{\bm{L}^{2}}.
\end{align}
Substituting the above elements into \eqref{pressure:sys:L2:estimate} yields \eqref{pressure:L2:est}.
\end{proof}

\begin{remark}
We choose not to use the estimate
\begin{align}\label{Alt:pressure:L2:nabalvarphi:L3}
c \norm{h}_{L^{2}} \leq \norm{\nabla h}_{\bm{L}^{2}} \leq \norm{(\mu - \overline{\mu} + \chi \sigma) \nabla \varphi}_{\bm{L}^{2}}
\end{align}
obtained from substituting $\zeta = h$ in \eqref{auxiliary:pressure:weak}, where $c$ is a positive constant depending only on $C_{p}$, since by \eqref{auxiliary:pressure:RHS:L2} we require control of $\nabla \varphi$ in the $\bm{L}^{3}(\Omega)$-norm and this is not available when deriving the first a priori estimate.  Thus, we make use of the auxiliary problem $f = (-\Laplace_{N})^{-1}(h)$ to derive another estimate on $\norm{h}_{L^{2}}$ that involves controlling $\nabla \varphi$ in the weaker $\bm{L}^{\frac{3}{2}}(\Omega)$-norm.
\end{remark}

Next, we state regularity estimates for the pressure and the velocity field.  The hypothesis will be fulfilled for the Galerkin ansatz once we derived the a priori estimates in Section \ref{sec:apriori}.  Note that in Lemma \ref{lem:reg:pressurevelocity} below, we consider a source term $\Gamma_{\bm{v}} \in L^{2}(0,T;L^{2}_{0})$, so that our new regularity results for the pressure and the velocity is also applicable to the setting considered in \cite{preprint:JiangWuZheng14}.
\begin{lemma}\label{lem:reg:pressurevelocity}
Let $\varphi \in L^{\infty}(0,T;H^{1}) \cap L^{2}(0,T;H^{2}_{N} \cap H^{3})$, $\sigma \in L^{2}(0,T;H^{1})$, $\mu \in L^{2}(0,T;H^{1})$, the source term $\Gamma_{\bm{v}} \in L^{2}(0,T;L^{2}_{0})$, and the function $p$ satisfying \eqref{pressure:sys}.  Then,
\begin{align}\label{pressure:L8/5H1bdd}
\norm{p}_{L^{\frac{8}{5}}(H^{1})}^{\frac{8}{5}} \leq C_{1} \norm{\varphi}_{L^{\infty}(H^{1})}^{\frac{6}{5}} \norm{\mu + \chi\sigma}_{L^{2}(H^{1})}^{\frac{8}{5}}  \norm{\varphi}_{L^{2}(H^{3})}^{\frac{2}{5}} + \frac{C_{1}}{K} T^{\frac{1}{5}} \norm{\Gamma_{\bm{v}}}_{L^{2}(L^{2})}^{\frac{8}{5}},
\end{align}
for some positive constant $C_{1}$ depending only on $\Omega$, and
\begin{equation}\label{pressure:L8/7H2bdd}
\begin{aligned}
\norm{p}_{L^{\frac{8}{7}}(H^{2})}^{\frac{8}{7}} & \leq C_{2} T^{\frac{3}{7}} K^{-\frac{8}{7}}\norm{\Gamma_{\bm{v}}}_{L^{2}(L^{2})}^{\frac{8}{7}} + C_{2} T^{\frac{2}{7}} \norm{p}_{L^{\frac{8}{5}}(H^{1})}^{\frac{8}{7}} \\
& + C_{2}\norm{\varphi}_{L^{\infty}(H^{1})}^{\frac{2}{7}} \norm{\mu + \chi \sigma}_{L^{2}(H^{1})}^{\frac{8}{7}}  \norm{\varphi}_{L^{2}(H^{3})}^{\frac{6}{7}},
\end{aligned}
\end{equation}
for some positive constant $C_{2}$ depending only on $\Omega$.  Moreover, if we have the relation
\begin{align*}
\bm{v} = -K \left ( \nabla p - (\mu + \chi \sigma) \nabla \varphi \right ),
\end{align*}
then
\begin{equation}\label{velocityL8/7H1bdd}
\begin{aligned}
\norm{ \der \bm{v}}_{L^{\frac{8}{7}}(\bm{L}^{2})}^{\frac{8}{7}} & \leq C_{3}K \norm{p}_{L^{\frac{8}{7}}(H^{2})}^{\frac{8}{7}}  + C_{3}K \norm{\mu + \chi \sigma}_{L^{2}(H^{1})}^{\frac{8}{7}} \norm{\varphi}_{L^{2}(H^{3})}^{\frac{6}{7}} \norm{\varphi}_{L^{\infty}(H^{1})}^{\frac{2}{7}},
\end{aligned}
\end{equation}
for some positive constant $C_{3}$ depending only on $\Omega$.
\end{lemma}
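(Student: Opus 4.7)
The plan is to prove the three estimates \eqref{pressure:L8/5H1bdd}, \eqref{pressure:L8/7H2bdd} and \eqref{velocityL8/7H1bdd} in sequence from the pressure equation \eqref{pressure:sys}, using Gagliardo--Nirenberg interpolation in three dimensions, elliptic regularity for the Neumann Laplacian, and H\"older's inequality in time. I would first establish two interpolation estimates in $d = 3$, obtained from \eqref{GagNirenIneq} applied to $\nabla \varphi \in \bm{H}^{2} \cap \bm{L}^{2}$:
\begin{equation*}
\norm{\nabla \varphi}_{\bm{L}^{3}} \leq C \norm{\varphi}_{H^{1}}^{3/4} \norm{\varphi}_{H^{3}}^{1/4}, \qquad \norm{\nabla\varphi}_{\bm{L}^{\infty}} + \norm{\der^{2} \varphi}_{\bm{L}^{3}} \leq C \norm{\varphi}_{H^{1}}^{1/4} \norm{\varphi}_{H^{3}}^{3/4}.
\end{equation*}
The two sets of exponents $(3/4,1/4)$ and $(1/4,3/4)$ are exactly what will force the temporal exponents $8/5$ and $8/7$ in the claims.

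For \eqref{pressure:L8/5H1bdd} I would test the weak form of \eqref{pressure:sys} with $p$ itself. Since $\mean{p}=0$, the Poincar\'e inequality \eqref{regular:Poincare} together with $H^{1} \subset L^{6}$ yields
\begin{equation*}
\norm{\nabla p}_{\bm{L}^{2}} \leq \frac{C}{K} \norm{\Gamma_{\bm{v}}}_{L^{2}} + C \norm{\mu + \chi \sigma}_{H^{1}} \norm{\nabla \varphi}_{\bm{L}^{3}}.
\end{equation*}
Substituting the first interpolation bound, raising to the power $8/5$, integrating over $(0,T)$, and applying H\"older in time with exponents $5/4$ (for $\mu+\chi\sigma$) and $5$ (for $\varphi$ in $H^{3}$) produces the product term. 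The bound $\int_{0}^{T} \norm{\Gamma_{\bm{v}}}_{L^{2}}^{8/5} \dt \leq T^{1/5} \norm{\Gamma_{\bm{v}}}_{L^{2}(L^{2})}^{8/5}$ accounts for the source contribution with the $T^{1/5}$ factor.

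For \eqref{pressure:L8/7H2bdd} I would appeal to elliptic regularity for the Neumann Laplacian on the $C^{3}$-boundary, $\norm{p}_{H^{2}} \leq C(\norm{\Laplace p}_{L^{2}} + \norm{p}_{H^{1}})$, and expand
\begin{equation*}
\Laplace p = -\frac{1}{K} \Gamma_{\bm{v}} + \nabla(\mu + \chi \sigma) \cdot \nabla \varphi + (\mu+\chi\sigma)\Laplace\varphi.
\end{equation*}
The two product terms are bounded respectively by $\norm{\nabla(\mu+\chi\sigma)}_{\bm{L}^{2}} \norm{\nabla \varphi}_{\bm{L}^{\infty}}$ and by $\norm{\mu+\chi\sigma}_{L^{6}} \norm{\der^{2} \varphi}_{\bm{L}^{3}}$, both of which the second interpolation estimate above dominates by $C \norm{\mu+\chi\sigma}_{H^{1}} \norm{\varphi}_{H^{1}}^{1/4} \norm{\varphi}_{H^{3}}^{3/4}$. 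Raising to $8/7$ and applying H\"older in time with exponents $7/4$ and $7/3$ gives the product term in the claim, while $\int_{0}^{T} \norm{p}_{H^{1}}^{8/7} \dt \leq T^{2/7} \norm{p}_{L^{8/5}(H^{1})}^{8/7}$ produces the $\norm{p}_{L^{8/5}(H^{1})}^{8/7}$ term from the lower-order piece of the elliptic estimate, and $\int_{0}^{T} \norm{\Gamma_{\bm{v}}}_{L^{2}}^{8/7} \dt \leq T^{3/7} \norm{\Gamma_{\bm{v}}}_{L^{2}(L^{2})}^{8/7}$ gives the source contribution.

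Finally, for \eqref{velocityL8/7H1bdd} I would differentiate the velocity identity directly to obtain $\der \bm{v} = -K(\der^{2} p - \nabla(\mu+\chi\sigma) \otimes \nabla \varphi - (\mu+\chi\sigma) \der^{2}\varphi)$, take the $\bm{L}^{2}$-norm, and reuse the second interpolation bound on the last two terms to arrive at $\norm{\der \bm{v}}_{\bm{L}^{2}} \leq K \norm{p}_{H^{2}} + CK \norm{\mu+\chi\sigma}_{H^{1}} \norm{\varphi}_{H^{1}}^{1/4} \norm{\varphi}_{H^{3}}^{3/4}$; raising to $8/7$ and integrating then gives \eqref{velocityL8/7H1bdd}. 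The main obstacle is not analytical but essentially bookkeeping: the three Gagliardo--Nirenberg exponents have to be matched against the temporal integrabilities $L^{\infty}$ for $\varphi(H^{1})$, $L^{2}$ for $\varphi(H^{3})$, and $L^{2}$ for $\mu+\chi\sigma(H^{1})$ via H\"older, and it is this matching that forces the choices $q = 8/5$ in the $H^{1}$-estimate and $q = 8/7$ in the $H^{2}$ and velocity estimates.
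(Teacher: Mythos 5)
Your proposal is correct and follows essentially the same route as the paper: testing the weak form of the pressure equation with $p$, the Gagliardo--Nirenberg bounds $\norm{\nabla\varphi}_{\bm{L}^{3}} \leq C\norm{\varphi}_{H^{1}}^{3/4}\norm{\varphi}_{H^{3}}^{1/4}$ and $\norm{\nabla\varphi}_{\bm{L}^{\infty}} + \norm{D^{2}\varphi}_{L^{3}} \leq C\norm{\varphi}_{H^{1}}^{1/4}\norm{\varphi}_{H^{3}}^{3/4}$, elliptic regularity for the Neumann problem, and H\"older in time with exactly the exponents that match $L^{\infty}(H^{1})$, $L^{2}(H^{3})$ and $L^{2}(H^{1})$ are all identical to the paper's argument. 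No gaps.
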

\begin{proof}
From \eqref{pressure:sys} we see that $p$ satisfies $\mean{p} = 0$ and
\begin{align*}
\int_{\Omega} \nabla p \cdot \nabla \zeta \dx = \int_{\Omega} (\mu + \chi \sigma) \nabla \varphi \cdot \nabla \zeta + \frac{1}{K} \Gamma_{\bm{v}} \zeta \dx \quad \forall \zeta \in H^{1}(\Omega).
\end{align*}
Testing with $\zeta = p$ and applying the H\"{o}lder's inequality and the Poincar\'{e} inequality \eqref{regular:Poincare} gives
\begin{equation}\label{L2:gradpressure}
\begin{aligned}
\norm{\nabla p}_{\bm{L}^{2}} \leq \norm{(\mu + \chi \sigma) \nabla \varphi}_{\bm{L}^{2}} + \frac{C_{p}}{K} \norm{\Gamma_{\bm{v}}}_{L^{2}}.
\end{aligned}
\end{equation}
Applying H\"{o}lder's inequality and the Sobolev embedding $H^{1} \subset L^{6}$ yields that
\begin{equation}\label{Galerkin:pressure:L2grad}
\begin{aligned}
\norm{(\mu + \chi \sigma) \nabla \varphi}_{\bm{L}^{2}} & \leq \norm{\mu + \chi \sigma}_{L^{6}} \norm{\nabla \varphi}_{\bm{L}^{3}}  \leq C_{\mathrm{Sob}} \norm{\mu + \chi \sigma}_{H^{1}} \norm{\nabla \varphi}_{\bm{L}^{3}}.
\end{aligned}
\end{equation}
By the Gagliardo--Nirenberg inequality \eqref{GagNirenIneq} with parameters $j = 0$, $p = 3$, $r = 2$, $m = 2$, $d = 3$ and $q = 2$,
\begin{align}\label{GNineq}
\norm{\nabla \varphi}_{\bm{L}^{3}} \leq C \norm{\nabla \varphi}_{\bm{H}^{2}}^{\frac{1}{4}} \norm{\nabla \varphi}_{\bm{L}^{2}}^{\frac{3}{4}} \leq C\norm{\varphi}_{H^{3}}^{\frac{1}{4}} \norm{\varphi}_{H^{1}}^{\frac{3}{4}},
\end{align}
where $C > 0$ is a constant depending only on $\Omega$.  Then, the boundedness of $\mu, \sigma$ in $L^{2}(0,T;H^{1})$ and $\varphi$ in $L^{2}(0,T;H^{3}) \cap L^{\infty}(0,T;H^{1})$ leads to
\begin{align*}
\int_{0}^{T} \norm{(\mu + \chi \sigma) \nabla \varphi}_{\bm{L}^{2}}^{\frac{8}{5}} \dt & \leq C \int_{0}^{T} \norm{\mu + \chi \sigma}_{H^{1}}^{\frac{8}{5}}  \norm{\varphi}_{H^{3}}^{\frac{2}{5}} \norm{\varphi}_{H^{1}}^{\frac{6}{5}} \dt \\
& \leq C \norm{\varphi}_{L^{\infty}(H^{1})}^{\frac{6}{5}} \norm{\mu + \chi \sigma}_{L^{2}(H^{1})}^{\frac{8}{5}} \norm{\varphi}_{L^{2}(H^{3})}^{\frac{2}{5}}.
\end{align*}
By \eqref{L2:gradpressure} we find that
\begin{align*}
\int_{0}^{T} \norm{\nabla p}_{\bm{L}^{2}}^{\frac{8}{5}} \dt & \leq \int_{0}^{T} \norm{(\mu + \chi \sigma) \nabla \varphi}_{\bm{L}^{2}}^{\frac{8}{5}} + \frac{C}{K} \norm{\Gamma_{\bm{v}}}_{L^{2}}^{\frac{8}{5}} \dt \\
& \leq  C \norm{\varphi}_{L^{\infty}(H^{1})}^{\frac{6}{5}}  \norm{\mu + \chi \sigma}_{L^{2}(H^{1})}^{\frac{8}{5}} \norm{\varphi}_{L^{2}(H^{3})}^{\frac{2}{5}} + \frac{C}{K}  T^{\frac{1}{5}} \norm{\Gamma_{\bm{v}}}_{L^{2}(L^{2})}^{\frac{8}{5}},
\end{align*}
where the positive constant $C$ depends only on $\Omega$.  As $\mean{p} = 0$, by the Poincar\'{e} inequality \eqref{regular:Poincare}, we see that
\begin{align*}
\norm{p}_{L^{\frac{8}{5}}(H^{1})}^{\frac{8}{5}} & \leq C \norm{\varphi}_{L^{\infty}(H^{1})}^{\frac{6}{5}} \norm{\mu + \chi \sigma}_{L^{2}(H^{1})}^{\frac{8}{5}} \norm{\varphi}_{L^{2}(H^{3})}^{\frac{2}{5}} + \frac{C}{K} T^{\frac{1}{5}}\norm{\Gamma_{\bm{v}}}_{L^{2}(L^{2})}^{\frac{8}{5}},
\end{align*}
for some positive constant $C$ depending only on $\Omega$.  Next, we see that
\begin{align*}
\norm{\div ((\mu + \chi \sigma) \nabla \varphi)}_{L^{2}} & \leq \norm{(\mu + \chi \sigma) \Laplace \varphi}_{L^{2}} + \norm{\nabla (\mu + \chi \sigma) \cdot \nabla \varphi}_{L^{2}} \\
& \leq \norm{\mu + \chi \sigma}_{L^{6}} \norm{\Laplace \varphi}_{L^{3}} + \norm{\nabla (\mu + \chi \sigma)}_{\bm{L}^{2}} \norm{\nabla \varphi}_{\bm{L}^{\infty}} .
\end{align*}
By the Gagliardo--Nirenberg inequality (\ref{GagNirenIneq}), we find that
\begin{equation}\label{GN:D2fL3andnablafLinfty}
\begin{aligned}
\norm{D^{2} \varphi}_{L^{3}} & \leq C \norm{\varphi}_{H^{3}}^{\frac{3}{4}} \norm{\varphi}_{L^{6}}^{\frac{1}{4}} \leq C \norm{\varphi}_{H^{3}}^{\frac{3}{4}} \norm{\varphi}_{H^{1}}^{\frac{1}{4}}, \\
\norm{\nabla \varphi}_{\bm{L}^{\infty}} & \leq C \norm{\varphi}_{H^{3}}^{\frac{3}{4}} \norm{\varphi}_{L^{6}}^{\frac{1}{4}} \leq C \norm{\varphi}_{H^{3}}^{\frac{3}{4}} \norm{\varphi}_{H^{1}}^{\frac{1}{4}},
\end{aligned}
\end{equation}
and so, we have
\begin{equation}\label{pressure:divRHSL2}
\begin{aligned}
\norm{\div ((\mu + \chi \sigma) \nabla \varphi)}_{L^{2}} \leq C \norm{\mu + \chi \sigma}_{H^{1}} \norm{\varphi}_{H^{3}}^{\frac{3}{4}} \norm{\varphi}_{H^{1}}^{\frac{1}{4}}.
\end{aligned}
\end{equation}
That is, $\div (( \mu + \chi \sigma) \nabla \varphi) \in L^{2}$.  Since by assumption $\Gamma_{\bm{v}} \in L^{2}_{0}$, using elliptic regularity theory, we find that $p(t,\cdot) \in H^{2}$ for a.e. $t$ and there exists a constant $C$ depending only on $\Omega$, such that
\begin{align}\label{Galerkin:pressure:H2}
\norm{p}_{H^{2}} \leq C \left ( \norm{p}_{H^{1}} + \norm{\div ((\mu + \chi \sigma)\nabla \varphi)}_{L^{2}} +  K^{-1} \norm{\Gamma_{\bm{v}}}_{L^{2}} \right ).
\end{align}
Furthermore, from \eqref{pressure:divRHSL2}, we see that
\begin{align*}
\int_{0}^{T} \norm{\div ((\mu + \chi \sigma) \nabla \varphi)}_{L^{2}}^{\frac{8}{7}} \dt & \leq C\norm{\varphi}_{L^{\infty}(H^{1})}^{\frac{2}{7}} \int_{0}^{T} \norm{\mu + \chi \sigma}_{H^{1}}^{\frac{8}{7}} \norm{\varphi}_{H^{3}}^{\frac{6}{7}} \dt \\
& \leq  C \norm{\varphi}_{L^{\infty}(H^{1})}^{\frac{2}{7}} \norm{\mu + \chi \sigma}_{L^{2}(H^{1})}^{\frac{8}{7}} \norm{\varphi}_{L^{2}(H^{3})}^{\frac{6}{7}},
\end{align*}
and so for some positive constant $C$ depending only on $\Omega$, it holds that
\begin{equation}\label{Galerkin:pressure:L8/7H2}\begin{aligned}
\int_{0}^{T} \norm{p}_{H^{2}}^{\frac{8}{7}} \dt & \leq CT^{\frac{3}{7}} K^{-\frac{8}{7}} \norm{\Gamma_{\bm{v}}}_{L^{2}(L^{2})}^{\frac{8}{7}} + C T^{\frac{2}{7}} \norm{p}_{L^{\frac{8}{5}}(H^{1})}^{\frac{8}{7}} \\
& + C \norm{\varphi}_{L^{\infty}(H^{1})}^{\frac{2}{7}} \norm{\mu + \chi \sigma}_{L^{2}(H^{1})}^{\frac{8}{7}} \norm{\varphi}_{L^{2}(H^{3})}^{\frac{6}{7}}.
\end{aligned}
\end{equation}
For the velocity field $\bm{v}$ we estimate as follows.  Let $1 \leq i, j \leq 3$ be fixed, we obtain from \eqref{GN:D2fL3andnablafLinfty},
\begin{equation}\label{Galerkin:velo:H1}
\begin{aligned}
\norm{D_{i} v_{j}}_{L^{2}} & = K\norm{D_{i} D_{j} p - D_{i}(\mu + \chi \sigma) D_{j} \varphi - (\mu + \chi \sigma) D_{i} D_{j} \varphi}_{L^{2}} \\
& \leq K \left ( \norm{p}_{H^{2}} + \norm{\nabla (\mu + \chi \sigma)}_{\bm{L}^{2}} \norm{\nabla \varphi}_{\bm{L}^{\infty}} + \norm{\mu + \chi \sigma}_{L^{6}} \norm{D^{2}\varphi}_{L^{3}} \right ) \\
& \leq K \left ( \norm{p}_{H^{2}} + C \norm{\mu + \chi \sigma}_{H^{1}} \norm{\varphi}_{H^{3}}^{\frac{3}{4}} \norm{\varphi}_{H^{1}}^{\frac{1}{4}} \right ).
\end{aligned}
\end{equation}
Applying the same calculation as in \eqref{Galerkin:pressure:L8/7H2} yields
\begin{align*}
 \int_{0}^{T} \norm{ \der \bm{v}}_{\bm{L}^{2}}^{\frac{8}{7}} \dt & \leq C K \int_{0}^{T} \norm{p}_{H^{2}}^{\frac{8}{7}} + \norm{\mu + \chi \sigma}_{H^{1}}^{\frac{8}{7}} \norm{\varphi}_{H^{3}}^{\frac{6}{7}} \norm{\varphi}_{H^{1}}^{\frac{2}{7}} \dt \\
& \leq CK \left ( \norm{p}_{L^{\frac{8}{7}}(H^{2})}^{\frac{8}{7}} + \norm{\mu + \chi \sigma}_{L^{2}(H^{1})}^{\frac{8}{7}} \norm{\varphi}_{L^{2}(H^{3})}^{\frac{6}{7}} \norm{\varphi}_{L^{\infty}(H^{1})}^{\frac{2}{7}} \right ),
\end{align*}
for some positive constant $C$ depending only on $\Omega$.
\end{proof}

\subsubsection{Source term from the Darcy system}\label{sec:SourcetermDarcy}
To estimate the third source term
\begin{align*}
\int_{\Omega} \Gamma_{\bm{v}} \left ( p_{k} - \mu_{k} \varphi_{k} - \frac{D}{2} \abs{\sigma_{k}}^{2} \right ) \dx = \int_{\Omega} \Gamma_{\bm{v}} \left ( p_{k} - \mean{\mu_{k}} \varphi_{k} + (\mu_{k} - \mean{\mu_{k}}) \varphi_{k} - \frac{D}{2} \abs{\sigma_{k}}^{2} \right ) \dx
\end{align*}
of the energy equality we use H\"{o}lder's inequality to obtain
\begin{align*}
\abs{\int_{\Omega} \Gamma_{\bm{v}} \frac{D}{2} \abs{\sigma_{k}}^{2} + \Gamma_{\bm{v}} \varphi_{k} (\mu_{k} - \mean{\mu_{k}}) \dx} & \leq \frac{D}{2}\norm{\Gamma_{\bm{v}}}_{L^{2}} \norm{\sigma_{k}}_{L^{4}}^{2} \\
&  + \norm{\Gamma_{\bm{v}}}_{L^{\frac{3}{2}}} \norm{\mu_{k} - \mean{\mu_{k}}}_{L^{6}} \norm{\varphi_{k}}_{L^{6}}.
\end{align*}
By the Gagliardo--Nirenberg inequality \eqref{GagNirenIneq} with $j = 0$, $r = 2$, $m = 1$, $p = 4$, $q = 2$ and $\alpha = \frac{3}{4}$, we have
\begin{align*}
\norm{\sigma_{k}}_{L^{4}}^{2} \leq C \norm{\sigma_{k}}_{H^{1}}^{\frac{3}{2}} \norm{\sigma_{k}}_{L^{2}}^{\frac{1}{2}} = C \left ( \norm{\sigma_{k}}_{L^{2}}^{2} + \norm{\sigma_{k}}_{L^{2}}^{\frac{1}{2}} \norm{\nabla \sigma_{k}}_{\bm{L}^{2}}^{\frac{3}{2}} \right ).
\end{align*}
By Young's inequality with H\"{o}lder exponents (i.e., $ab \leq \frac{\eps}{p} a^{p} + \frac{\eps^{-q/p}}{q} b^{q}$ for $\frac{1}{p} + \frac{1}{q} = 1$ and $\eps > 0$), we find that
\begin{align*}
\frac{D}{2} \norm{\Gamma_{\bm{v}}}_{L^{2}} \norm{\sigma_{k}}_{L^{4}}^{2} \leq C \left (\norm{\Gamma_{\bm{v}}}_{L^{2}} \norm{\sigma_{k}}_{L^{2}}^{2} + \norm{\Gamma_{\bm{v}}}_{L^{2}}^{4} \norm{\sigma_{k}}_{L^{2}}^{2} \right ) + \frac{n_{0} D^{2}}{4} \norm{\nabla \sigma_{k}}_{\bm{L}^{2}}^{2},
\end{align*}
for some positive constant $C$ depending only in $n_{0}$, $D$ and $\Omega$.  Then, by \eqref{muL6Poin} we have
\begin{align*}
& \abs{\int_{\Omega} \Gamma_{\bm{v}} \frac{D}{2} \abs{\sigma_{k}}^{2} + \Gamma_{\bm{v}} \varphi_{k} (\mu_{k} - \mean{\mu_{k}}) \dx} \\
& \quad \leq \frac{n_{0} D^{2}}{4} \norm{\nabla \sigma_{k}}_{\bm{L}^{2}}^{2} + C \left ( 1 + \norm{\Gamma_{\bm{v}}}_{L^{2}}^{4} \right ) \norm{\sigma_{k}}_{L^{2}}^{2} + \norm{\Gamma_{\bm{v}}}_{L^{\frac{3}{2}}} c(C_{p}, C_{\mathrm{Sob}}) \norm{\nabla \mu_{k}}_{\bm{L}^{2}} \norm{\varphi_{k}}_{H^{1}} \\
& \quad \leq \frac{n_{0} D^{2}}{4} \norm{\nabla \sigma_{k}}_{\bm{L}^{2}}^{2} + C \left ( 1 + \norm{\Gamma_{\bm{v}}}_{L^{2}}^{4} \right ) \norm{\sigma_{k}}_{L^{2}}^{2} + \frac{m_{0}}{8} \norm{\nabla \mu_{k}}_{\bm{L}^{2}}^{2} + C\norm{\Gamma_{\bm{v}}}_{L^{2}}^{2}\norm{\varphi_{k}}_{H^{1}}^{2},
\end{align*}
where the positive constant $C$ depends only on $\Omega$, $m_{0}$, $n_{0}$ and $D$.  Here we point out that the assumption $\Gamma_{\bm{v}} \in L^{4}(0,T;L^{2}_{0})$ is needed.  For the remainder term $\Gamma_{\bm{v}} (p_{k} - \mean{\mu_{k}} \varphi_{k})$, we find that
\begin{align*}
& p_{k} - \mean{\mu_{k}} \varphi_{k} \\
& \quad = \left ( \left (-\Laplace_{N} \right )^{-1} \left ( \frac{1}{K} \Gamma_{\bm{v}} - \div ( (\mu_{k} - \mean{\mu_{k}} + \chi \sigma_{k}) \nabla \varphi_{k}) - \mean{\mu_{k}} \div \nabla( \varphi_{k} - \mean{\varphi_{k}}) \right )  \right )  - \mean{\mu_{k}} \varphi_{k} \\
& \quad = \left ( \left (-\Laplace_{N} \right )^{-1} \left ( \frac{1}{K} \Gamma_{\bm{v}} - \div ( (\mu_{k} - \mean{\mu_{k}} + \chi \sigma_{k}) \nabla \varphi_{k}) \right ) \right ) - \mean{\mu_{k}} \; \mean{\varphi_{k}} ,
\end{align*}
where we used
\begin{align*}
(-\Laplace_{N})^{-1} \left ( - \overline{\mu_{k}} \div \nabla (\varphi_{k} - \overline{\varphi_{k}} )\right ) = \overline{\mu_{k}} (\varphi_{k} - \overline{\varphi_{k}}).
\end{align*}
Then, by
\begin{align*}
\int_{\Omega} \Gamma_{\bm{v}} \mean{\varphi_{k}} \; \mean{\mu_{k}} \dx = \mean{\mu_{k}} \; \mean{\varphi_{k}} \int_{\Omega} \Gamma_{\bm{v}} \dx = 0,
\end{align*}
it holds that
\begin{align*}
\int_{\Omega} \Gamma_{\bm{v}} (p_{k} - \mean{\mu_{k}} \varphi_{k}) \dx = \int_{\Omega} \Gamma_{\bm{v}} \left ( \left (-\Laplace_{N} \right )^{-1} \left ( \frac{1}{K} \Gamma_{\bm{v}} - \div ( (\mu_{k} - \mean{\mu_{k}} + \chi \sigma_{k}) \nabla \varphi_{k}) \right ) \right ).
\end{align*}
Applying the calculations in the proof of Lemma \ref{lem:pressure:est} (specifically \eqref{NeumannEll:basic:Est}, \eqref{InvLaplDiv:L2:L6/5} and \eqref{muL6Poin}), H\"{o}lder's inequality and Young's inequality, we find that
\begin{align*}
& \abs{\int_{\Omega} \Gamma_{\bm{v}} (p_{k} - \mean{\mu_{k}} \varphi_{k}) \dx} \\
& \quad \leq \frac{C}{K} \norm{\Gamma_{\bm{v}}}_{L^{2}}^{2} + C \norm{\Gamma_{\bm{v}}}_{L^{2}} \left ( \norm{\nabla \mu_{k}}_{\bm{L}^{2}} + \chi \norm{\sigma_{k}}_{H^{1}} \right ) \norm{\nabla \varphi_{k}}_{\bm{L}^{\frac{3}{2}}} \\
& \quad \leq \frac{C}{K} \norm{\Gamma_{\bm{v}}}_{L^{2}}^{2} + \frac{m_{0}}{8} \norm{\nabla \mu_{k}}_{\bm{L}^{2}}^{2} + \frac{n_{0} D^{2}}{4} \norm{\nabla \sigma_{k}}_{\bm{L}^{2}}^{2} + \norm{\sigma_{k}}_{L^{2}}^{2}  + C (1 + \chi^{2}) \norm{\Gamma_{\bm{v}}}_{L^{2}}^{2} \norm{\nabla \varphi_{k}}_{\bm{L}^{2}}^{2},
\end{align*}
where $C$ is a positive constant depending only on $\abs{\Omega}$, $C_{p}$, $C_{\mathrm{Sob}}$, $D$, $n_{0}$ and $m_{0}$.  Here we point out that if we applied \eqref{Alt:pressure:L2:nabalvarphi:L3} instead of \eqref{pressure:L2:est} then we obtain a term containing $\norm{\nabla \varphi}_{\bm{L}^{3}}$ on the right-hand side and this cannot be controlled by the left-hand side of \eqref{apriorienergy:identity}.  Using \eqref{assump:lowerbddPsi} we have
\begin{align}\label{varphiL2norm}
\norm{\varphi_{k}}_{L^{2}}^{2} \leq \frac{1}{R_{1}} \norm{\Psi(\varphi_{k})}_{L^{1}} + \frac{R_{2}}{R_{1}} \abs{\Omega}.
\end{align}
Then, we obtain the following estimate
\begin{equation}\label{sourceterm:Gammabmv:prescribed:est}
\begin{aligned}
& \abs{\int_{\Omega} \Gamma_{\bm{v}} \left ( p_{k} - \mu_{k} \varphi_{k} - \frac{D}{2} \abs{\sigma_{k}}^{2} \right ) \dx } \\
 & \quad \leq C \left (\frac{1}{K} \norm{\Gamma_{\bm{v}}}_{L^{2}}^{2} + 1 \right ) + \frac{n_{0} D^{2}}{2} \norm{\nabla \sigma_{k}}_{\bm{L}^{2}}^{2} + \frac{m_{0}}{4} \norm{\nabla \mu_{k}}_{\bm{L}^{2}}^{2} \\
& \quad + C \left (1 + \norm{\Gamma_{\bm{v}}}_{L^{2}}^{4} \right )\norm{\sigma_{k}}_{L^{2}}^{2} + C \norm{\Psi(\varphi_{k})}_{L^{1}} + C(1 + \chi^{2}) \norm{\Gamma_{\bm{v}}}_{L^{2}}^{2} \norm{\nabla \varphi_{k}}_{\bm{L}^{2}}^{2},
\end{aligned}
\end{equation}
for some positive constant $C$ depending only on $R_{1}$, $R_{2}$, $\Omega$, $m_{0}$, $n_{0}$ and $D$.  Here we point out that it is crucial for the source term $\Gamma_{\bm{v}}$ to be prescribed and is not a function of $\varphi$, $\mu$ and $\sigma$, otherwise the product term $\norm{\Gamma_{\bm{v}}}_{L^{2}}^{4} \norm{\sigma_{k}}_{L^{2}}^{2}$ and $\norm{\Gamma_{\bm{v}}}_{L^{2}}^{2} \norm{\nabla \varphi_{k}}_{\bm{L}^{2}}^{2}$ cannot be controlled in the absence of any a priori estimates.  For the remaining source term
\begin{align*}
\int_{\Omega} \Gamma_{\varphi,k} \mu_{k} - \mathcal{S}_{k} N_{,\sigma}^{k}  \dx
\end{align*}
we split the analysis into two cases and combine with \eqref{sourceterm:Gammabmv:prescribed:est} to derive an energy inequality.

\subsubsection{Energy inequality for non-negative $\Theta_{\varphi}$}\label{sec:energyineq:nonnegativeThetavarphi}
Suppose $\Theta_{\varphi}$ is non-negative and bounded, and $\Psi$ is a potential that satisfies \eqref{assump:Psiquadratic}.  We will estimate the mean of $\mu_{k}$ by setting $j = 1$ in \eqref{Galerkin:mu}, and using the growth condition \eqref{assump:Psiquadratic} to obtain
\begin{align*}
\abs{\int_{\Omega} \mu_{k} \dx}^{2} & = \abs{\int_{\Omega} A \Psi'(\varphi_{k}) - \chi \sigma_{k} \dx}^{2} \leq 2A^{2} \norm{\Psi'(\varphi_{k})}_{L^{1}}^{2} + 2 \chi^{2} \norm{\sigma_{k}}_{L^{1}}^{2} \\
& \leq 2 A^{2} R_{4}^{2} \left ( \abs{\Omega} + \abs{\Omega}^{\frac{1}{2}} \norm{\varphi_{k}}_{L^{2}} \right )^{2} + 2 \chi^{2} \abs{\Omega} \norm{\sigma_{k}}_{L^{2}}^{2} \\
& \leq C(A, R_{4}, \abs{\Omega}) + 4 A^{2} R_{4}^{2} \abs{\Omega} \norm{\varphi_{k}}_{L^{2}}^{2} + 2 \chi^{2} \abs{\Omega} \norm{\sigma_{k}}_{L^{2}}^{2} .
\end{align*}
Then, by the Poincar\'{e} inequality \eqref{regular:Poincare} and the growth condition \eqref{assump:lowerbddPsi}, we find that
\begin{equation}\label{muL2norm}
\begin{aligned}
\norm{\mu_{k}}_{L^{2}}^{2} & \leq 2 C_{P}^{2} \norm{\nabla \mu_{k}}_{\bm{L}^{2}}^{2} + 2 \abs{\Omega} \abs{\mean{\mu_{k}}}^{2} \\
& \leq 2 C_{p}^{2} \norm{\nabla \mu_{k}}_{\bm{L}^{2}}^{2} + 8 A^{2} R_{4}^{2} \norm{\varphi_{k}}_{L^{2}}^{2} + 4 \chi^{2} \norm{\sigma_{k}}_{L^{2}}^{2} + C(A, R_{4}, \abs{\Omega}) \\
& \leq 2 C_{p}^{2} \norm{\nabla \mu_{k}}_{\bm{L}^{2}}^{2} + \frac{8 A^{2} R_{4}^{2}}{R_{1}} \norm{\Psi(\varphi_{k})}_{L^{1}} +  4  \chi^{2} \norm{\sigma_{k}}_{L^{2}}^{2} + C(A, R_{4}, R_{1}, R_{2}, \abs{\Omega}) .
\end{aligned}
\end{equation}
Note that by the specific form \eqref{assump:Sourcetermspecificform} for $\Gamma_{\varphi}$ we have that
\begin{align*}
\Gamma_{\varphi,k} \mu_{k} = \Lambda_{\varphi}(\varphi_{k}, \sigma_{k}) \mu_{k} - \Theta_{\varphi}(\varphi_{k}, \sigma_{k}) \abs{\mu_{k}}^{2} .
\end{align*}
Moving the non-negative term $\Theta_{\varphi}(\varphi_{k}, \sigma_{k}) \abs{\mu_{k}}^{2}$ to the left-hand side of \eqref{apriorienergy:identity} and subsequently neglecting it, we estimate the remainder using the growth condition (\ref{assump:Sourcetermgrowth}) and H\"{o}lder's inequality as follows (here we use the notation $\Lambda_{\varphi,k} := \Lambda_{\varphi}(\varphi_{k}, \sigma_{k})$),
\begin{equation}\label{GammavarphiSNsigma:remainder}
\begin{aligned}
&  \abs{\int_{\Omega} \Lambda_{\varphi,k} \mu_{k} - \mathcal{S}_{k}(D \sigma_{k} + \chi(1 - \varphi_{k})) \dx} \\
& \quad \leq \norm{\Lambda_{\varphi,k}}_{L^{2}} \norm{\mu_{k}}_{L^{2}} + \left (\norm{\Lambda_{S,k}}_{L^{2}} + R_{0} \norm{\mu_{k}}_{L^{2}} \right )\norm{D \sigma_{k} + \chi(1-\varphi_{k})}_{L^{2}} \\
& \quad \leq C \left (1 + \chi + (1+\chi) \norm{\varphi_{k}}_{L^{2}} + (1+D)\norm{\sigma_{k}}_{L^{2}} \right ) \norm{\mu_{k}}_{L^{2}} \\
& \quad + C \left (1 + \norm{\varphi_{k}}_{L^{2}} + \norm{\sigma}_{L^{2}} \right ) \left ( \chi \abs{\Omega}^{\frac{1}{2}} + D \norm{\sigma}_{L^{2}} + \chi \norm{\varphi_{k}}_{L^{2}} \right )
\end{aligned}
\end{equation}
where $C$ is a positive constant depending only on $R_{0}$ and $\abs{\Omega}$.  By Young's inequality, \eqref{muL2norm} and \eqref{varphiL2norm}, we have
\begin{equation}\label{nonnegaTheta:sourceterm:apriori:1}
\begin{aligned}
& \abs{\int_{\Omega} \Lambda_{\varphi,k} \mu_{k} - \mathcal{S}_{k}(D \sigma_{k} + \chi(1 - \varphi_{k})) \dx} \\
& \quad \leq \frac{m_{0}}{8 C_{p}^{2}} \norm{\mu_{k}}_{L^{2}}^{2} +  C(1+ \chi + D +\chi^{2}) \norm{\varphi_{k}}_{L^{2}}^{2} + C(1+ \chi + D)^{2} \norm{\sigma_{k}}_{L^{2}}^{2} +   C(1 + \chi + \chi^{2}) \\
& \quad \leq  \frac{m_{0}}{4} \norm{\nabla \mu_{k}}_{\bm{L}^{2}}^{2} + C(1 + \chi^{2}) \norm{\sigma_{k}}_{L^{2}}^{2} + C(1 + \chi^{2}) \norm{\Psi(\varphi_{k})}_{L^{1}} + C(1 + \chi^{2}),
\end{aligned}
\end{equation}
for some positive constant $C$ depending only on $\abs{\Omega}$, $R_{0}$, $R_{1}$, $R_{2}$, $R_{4}$, $A$, $D$, $C_{p}$ and $m_{0}$.  Using the fact that
\begin{align*}
\norm{D \nabla \sigma}_{\bm{L}^{2}} \leq \norm{\nabla (D \sigma + \chi (1-\varphi))}_{\bm{L}^{2}} + \norm{\chi \nabla \varphi}_{\bm{L}^{2}},
\end{align*}
we now estimate the right-hand side of \eqref{apriorienergy:identity} using \eqref{boundryintegralRHS}, \eqref{sourceterm:Gammabmv:prescribed:est} and \eqref{nonnegaTheta:sourceterm:apriori:1}, which leads to
\begin{equation}\label{nonnegative:Thetavarphi:apriori:est:combined}
\begin{aligned}
& \frac{\dd}{\dt} \int_{\Omega} A \Psi(\varphi_{k}) + \frac{B}{2} \abs{\nabla \varphi_{k}}^{2} + \frac{D}{2} \abs{\sigma_{k}}^{2} + \chi \sigma_{k}(1-\varphi_{k}) \dx \\
& \quad  +  \frac{m_{0}}{2} \norm{\nabla \mu_{k}}_{\bm{L}^{2}}^{2} + \frac{n_{0} D^{2}}{2} \norm{\nabla \sigma_{k}}_{\bm{L}^{2}}^{2} + \frac{1}{K} \norm{\bm{v}_{k}}_{\bm{L}^{2}}^{2}  + \frac{Db}{2} \norm{\sigma_{k}}_{L^{2}(\pd \Omega)}^{2} \\
& \quad \leq C(1 + b)(1 + \chi^{2} ) \norm{\Psi(\varphi_{k})}_{L^{1}} + C \left ( \norm{\Gamma_{\bm{v}}}_{L^{2}}^{2} + b \right )(1 + \chi^{2}) \norm{\nabla \varphi_{k}}_{\bm{L}^{2}}^{2}\\
& \quad +  C \left (1 + \chi^{2} + \norm{\Gamma_{\bm{v}}}_{L^{2}}^{4} \right ) \norm{\sigma_{k}}_{L^{2}}^{2} + C(1+b)(1 + \chi^{2}) + \frac{C}{K} \norm{\Gamma_{\bm{v}}}_{L^{2}}^{2} \\
& \quad  + bC (1+\chi^{2}) \norm{\sigma_{\infty}}_{L^{2}(\pd \Omega)}^{2},
\end{aligned}
\end{equation}
for some positive constant $C$ not depending on $\Gamma_{\bm{v}}$, $K$, $b$ and $\chi$.  Integrating \eqref{nonnegative:Thetavarphi:apriori:est:combined} with respect to $t$ from $0$ to $s \in (0,T]$ leads to
\begin{equation}\label{nonnegative:Thetavarphi:apriori:integrated}
\begin{aligned}
& A \norm{\Psi(\varphi_{k}(s))}_{L^{1}} + \frac{B}{2} \norm{\nabla \varphi_{k}(s)}_{\bm{L}^{2}}^{2} + \frac{D}{2} \norm{\sigma_{k}(s)}_{L^{2}}^{2} + \int_{\Omega} \chi \sigma_{k}(s)(1-\varphi_{k}(s)) \dx \\
& \quad + \int_{0}^{s} \frac{m_{0}}{2} \norm{\nabla \mu_{k}}_{\bm{L}^{2}}^{2} + \frac{n_{0} D^{2}}{2} \norm{\nabla \sigma_{k}}_{\bm{L}^{2}}^{2} + \frac{1}{K} \norm{\bm{v}_{k}}_{\bm{L}^{2}}^{2} + \frac{Db}{2} \norm{\sigma_{k}}_{L^{2}(\pd \Omega)}^{2} \dt \\
& \quad \leq \int_{0}^{s} C(1+b)(1+\chi^{2}) \left (1 + \norm{\Gamma_{\bm{v}}}_{L^{2}}^{4} \right ) \left (  \norm{\Psi(\varphi_{k})}_{L^{1}} + \norm{\nabla \varphi_{k}}_{\bm{L}^{2}}^{2} + \norm{\sigma_{k}}_{L^{2}}^{2} \right )\dt \\
& \quad + C(1+b)(1 + \chi^{2})T + \frac{C}{K}\norm{\Gamma_{\bm{v}}}_{L^{2}(0,T;L^{2})}^{2} + Cb(1+\chi^{2}) \norm{\sigma_{\infty}}_{L^{2}(0,T;L^{2}(\pd \Omega))}^{2}\\
& \quad + C \left ( \norm{\Psi(\varphi_{0})}_{L^{1}} + \norm{\varphi_{0}}_{H^{1}}^{2} + \norm{\sigma_{0}}_{L^{2}}^{2} \right ),
\end{aligned}
\end{equation}
for some positive constant $C$ independent of $\Gamma_{\bm{v}}$, $K$, $\chi$ and $b$.  Here we used $\sigma_{0} \in L^{2}$ and $\varphi_{0} \in H^{1}$, which implies by  the growth condition \eqref{assump:Psiquadratic} that $\Psi(\varphi_{0}) \in L^{1}$.  Next, by H\"{o}lder's inequality and Young's inequality we have \begin{equation}\label{chemotaxisterm:energy}
\begin{aligned}
& \abs{\int_{\Omega} \chi \sigma_{k}(x,s) (1-\varphi_{k}(x,s)) \dx} \leq  \frac{2D}{8} \norm{\sigma_{k}(s)}_{L^{2}}^{2} + \frac{2 \chi^{2} \abs{\Omega}}{D} + \frac{2 \chi^{2}}{D} \norm{\varphi_{k}(s)}_{L^{2}}^{2} \\
& \quad \leq \frac{D}{4} \norm{\sigma_{k}(s)}_{L^{2}}^{2} + \frac{2 \chi^{2}}{D R_{1}} \norm{\Psi(\varphi_{k}(s))}_{L^{1}}  + \frac{2 \chi^{2} \abs{\Omega}}{D} (1 + R_{2}).
\end{aligned}
\end{equation}
Substituting \eqref{chemotaxisterm:energy} into \eqref{nonnegative:Thetavarphi:apriori:integrated} then yields
\begin{equation}\label{PreGronwall:nonnegativeThetavarphi:energyineq}
\begin{aligned}
\min & \left ( A - \frac{2 \chi^{2}}{D R_{1}}, \frac{B}{2}, \frac{D}{4} \right ) \left ( \norm{\Psi(\varphi_{k}(s))}_{L^{1}(\Omega)} + \norm{\nabla \varphi_{k}(s)}_{\bm{L}^{2}(\Omega)}^{2} +  \norm{\sigma_{k}(s)}_{L^{2}(\Omega)}^{2} \right ) \\
& + \int_{0}^{s} \norm{\nabla \mu_{k}}_{\bm{L}^{2}}^{2} + \norm{\nabla \sigma_{k}}_{\bm{L}^{2}}^{2} + \frac{1}{K} \norm{\bm{v}_{k}}_{\bm{L}^{2}}^{2} + \frac{b}{2} \norm{\sigma_{k}}_{L^{2}(\pd \Omega)}^{2} \dt \\
& \leq  \int_{0}^{s} C(1+b)(1+\chi^{2}) \left (1 + \norm{\Gamma_{\bm{v}}}_{L^{2}}^{4} \right ) \left ( \norm{\Psi(\varphi_{k})}_{L^{1}} + \norm{\nabla \varphi_{k}}_{\bm{L}^{2}}^{2} + \norm{\sigma_{k}}_{L^{2}}^{2} \right ) \dt \\
& + C(1+b)(1 + \chi^{2})(1+T) + \frac{C}{K}\norm{\Gamma_{\bm{v}}}_{L^{2}(0,T;L^{2})}^{2},
\end{aligned}
\end{equation}
for some positive constant $C$ independent of $\Gamma_{\bm{v}}$, $K$, $b$ and $\chi$.  Setting
\begin{equation}\label{energyineq:constants}
\begin{aligned}
\alpha & :=  C(1+b)(1 + \chi^{2})(1+T) + \frac{C}{K} \norm{\Gamma_{\bm{v}}}_{L^{2}(0,T;L^{2})}^{2} , \\
\beta & := C(1+b)(1+\chi^{2}) \left ( 1 + \norm{\Gamma_{\bm{v}}}_{L^{2}}^{4} \right ) \in L^{1}(0,T),
\end{aligned}
\end{equation}
and noting that
\begin{align*}
\alpha \left ( 1 + \int_{0}^{s} \beta(t) \exp \left ( \int_{0}^{t} \beta(r) \dr \right ) \dt \right ) \leq \alpha \left ( 1 + \norm{\beta}_{L^{1}(0,T)} \exp \left ( \norm{\beta}_{L^{1}(0,T)} \right ) \right ) < \infty,
\end{align*}
we find after applying the Gronwall inequality \eqref{Gronwall} to \eqref{PreGronwall:nonnegativeThetavarphi:energyineq} leads to
\begin{equation}\label{nonnegativeThetavarphi:energyineq}
\begin{aligned}
& \sup_{s \in (0,T]} \left ( \norm{\Psi(\varphi_{k}(s))}_{L^{1}} + \norm{\nabla \varphi_{k}(s)}_{\bm{L}^{2}}^{2} +  \norm{\sigma_{k}(s)}_{L^{2}}^{2} \right ) \\
& \quad + \int_{0}^{T} \norm{\nabla \mu_{k}}_{\bm{L}^{2}}^{2} + \norm{\nabla \sigma_{k}}_{\bm{L}^{2}}^{2} + \frac{1}{K} \norm{\bm{v}_{k}}_{\bm{L}^{2}}^{2} + \frac{b}{2} \norm{\sigma_{k}}_{L^{2}(\pd \Omega)}^{2} \dt \leq \mathcal{E},
\end{aligned}
\end{equation}
where we recall that $\mathcal{E}$ denotes a constant that is uniformly bounded for $b, \chi \in (0,1]$ and is also uniformly bounded for $K \in (0,1]$ when $\Gamma_{\bm{v}} = 0$.

\subsubsection{Energy inequality for positive $\Theta_{\varphi}$}
Suppose $\Theta_{\varphi}$ satisfies \eqref{assump:TheatvarphiPositive} and $\Psi$ is a potential satisfying the growth condition \eqref{assump:PsiSuperquadratic}.  Similar to the previous case, we see that the specific form for $\Gamma_{\varphi}$ leads to
\begin{align*}
\Gamma_{\varphi,k} \mu_{k} = \Lambda_{\varphi}(\varphi_{k}, \sigma_{k}) \mu_{k} - \Theta_{\varphi}(\varphi_{k}, \sigma_{k}) \abs{\mu_{k}}^{2} .
\end{align*}
We move the term $\Theta_{\varphi}(\varphi_{k}, \sigma_{k}) \abs{\mu_{k}}^{2}$ to the left-hand side of \eqref{apriorienergy:identity} and estimate the remainder as in \eqref{GammavarphiSNsigma:remainder}.  Using Young's inequality differently and also \eqref{varphiL2norm}, we have
\begin{equation}\label{positiveThetavarphi:GammavarphiSNsigma:reaminder}
\begin{aligned}
& \abs{\int_{\Omega} \Lambda_{\varphi,k} \mu_{k} - \mathcal{S}_{k}(D \sigma_{k} + \chi(1 - \varphi_{k})) \dx} \\
& \quad \leq \frac{R_{5}}{2} \norm{\mu_{k}}_{L^{2}}^{2} + C(1+ \chi + D +\chi^{2}) \norm{\varphi_{k}}_{L^{2}}^{2} + C(1+ \chi + D)^{2} \norm{\sigma_{k}}_{L^{2}}^{2} + C(1 + \chi + \chi^{2}) \\
& \quad \leq \frac{R_{5}}{2} \norm{\mu_{k}}_{L^{2}}^{2} + C(1 + \chi^{2}) \norm{\sigma_{k}}_{L^{2}}^{2} + C(1 + \chi^{2}) \norm{\Psi(\varphi_{k})}_{L^{1}} + C(1 + \chi^{2}),
\end{aligned}
\end{equation}
for some positive constant $C$ depending only on $\abs{\Omega}$, $R_{5}$, $R_{1}$, $R_{2}$, $A$, $D$, and $C_{p}$.  Using \eqref{boundryintegralRHS}, \eqref{sourceterm:Gammabmv:prescribed:est}, \eqref{positiveThetavarphi:GammavarphiSNsigma:reaminder} and the lower bound $\Theta_{\varphi} \geq R_{5}$, instead of \eqref{nonnegative:Thetavarphi:apriori:est:combined} we obtain from \eqref{apriorienergy:identity}
\begin{equation}
\begin{aligned}
& \frac{\dd}{\dt} \int_{\Omega}  A \Psi(\varphi_{k}) + \frac{B}{2} \abs{\nabla \varphi_{k}}^{2} + \frac{D}{2} \abs{\sigma_{k}}^{2} + \chi \sigma_{k}(1-\varphi_{k}) \dx \\
& \quad + \frac{R_{5}}{2} \norm{\mu_{k}}_{L^{2}}^{2} +   \frac{m_{0}}{2} \norm{\nabla \mu_{k}}_{\bm{L}^{2}(\Omega)}^{2} + \frac{n_{0} D^{2}}{2} \norm{\nabla \sigma_{k}}_{\bm{L}^{2}}^{2}  + \frac{1}{K} \norm{\bm{v}_{k}}_{\bm{L}^{2}}^{2}  + \frac{Db}{2} \norm{\sigma_{k}}_{L^{2}(\pd \Omega)}^{2} \\
& \quad \leq C(1 + b)(1 + \chi^{2}) \norm{\Psi(\varphi_{k})}_{L^{1}} + C \left ( \norm{\Gamma_{\bm{v}}}_{L^{2}}^{2} + b \right )(1 + \chi^{2}) \norm{\nabla \varphi_{k}}_{\bm{L}^{2}}^{2}\\
& \quad + C \left (1 + \chi^{2} + \norm{\Gamma_{\bm{v}}}_{L^{2}}^{4} \right ) \norm{\sigma_{k}}_{L^{2}}^{2} + C(1+b)(1 + \chi^{2}) + \frac{C}{K} \norm{\Gamma_{\bm{v}}}_{L^{2}}^{2} \\
& \quad + Cb(1+\chi^{2}) \norm{\sigma_{\infty}}_{L^{2}(\pd \Omega)}^{2},
\end{aligned}
\end{equation}
for some positive constant $C$ independent of $\Gamma_{\bm{v}}$, $K$, $b$ and $\chi$.  We point out the main difference between \eqref{nonnegative:Thetavarphi:apriori:est:combined} and the above is the appearance of the term $\frac{R_{5}}{2} \norm{\mu_{k}}_{L^{2}}^{2}$ on the left-hand side.  The positivity of $\Theta_{\varphi}$ allows us to absorb the $\norm{\mu_{k}}_{L^{2}}^{2}$ term on the right-hand side of \eqref{positiveThetavarphi:GammavarphiSNsigma:reaminder} and thus we do not need to use \eqref{muL2norm}, which was the main reason why $\Psi$ has to be a quadratic potential for a non-negative $\Theta_{\varphi}$.  Then, applying a similar argument as in Section \ref{sec:energyineq:nonnegativeThetavarphi}, we arrive at an analogous energy inequality to \eqref{nonnegativeThetavarphi:energyineq},
\begin{equation}\label{positiveThetavarphi:energyineq}
\begin{aligned}
 \sup_{s \in (0,T]} & \left ( \norm{\Psi(\varphi_{k}(s))}_{L^{1}} + \norm{\nabla \varphi_{k}(s)}_{\bm{L}^{2}}^{2} +  \norm{\sigma_{k}(s)}_{L^{2}}^{2} \right ) \\
& + \int_{0}^{T} \norm{\mu_{k}}_{H^{1}}^{2} + \norm{\nabla \sigma_{k}}_{\bm{L}^{2}}^{2} + \frac{1}{K} \norm{\bm{v}_{k}}_{\bm{L}^{2}}^{2} + \frac{b}{2} \norm{\sigma_{k}}_{L^{2}(\pd \Omega)}^{2} \dt \leq \mathcal{E}.
\end{aligned}
\end{equation}
Using \eqref{muL2norm} and \eqref{varphiL2norm} applied to \eqref{nonnegativeThetavarphi:energyineq}, and similarly using \eqref{varphiL2norm} applied to \eqref{positiveThetavarphi:energyineq} we obtain
\begin{equation}\label{unified:energyineq}
\begin{aligned}
 \sup_{s \in (0,T]} & \left ( \norm{\Psi(\varphi_{k}(s))}_{L^{1}} + \norm{\varphi_{k}(s)}_{H^{1}}^{2} +  \norm{\sigma_{k}(s)}_{L^{2}}^{2} \right ) \\
& + \int_{0}^{T} \norm{\mu_{k}}_{H^{1}}^{2} + \norm{\nabla \sigma_{k}}_{\bm{L}^{2}}^{2} + \frac{1}{K} \norm{\bm{v}_{k}}_{\bm{L}^{2}}^{2} + \frac{b}{2} \norm{\sigma_{k}}_{L^{2}(\pd \Omega)}^{2} \dt \leq \mathcal{E}.
\end{aligned}
\end{equation}
This a priori estimate implies that the Galerkin ansatz $\varphi_{k}$, $\mu_{k}$, $\sigma_{k}$ and $\bm{v}_{k}$ can be extended to the interval $[0,T]$.  To determine if $p_{k}$ can also be extended to the interval $[0,T]$ we require some higher order estimates for $\varphi_{k}$ in order to use \eqref{pressure:L8/5H1bdd}.

\subsection{Higher order estimates}
Let $\Pi_{k}$ denote the orthogonal projection onto the finite-dimensional subspace $W_{k}$.  From \eqref{Galerkin:mu} we may view $\varphi_{k}$ as the solution to the following elliptic equation
\begin{subequations}\label{ellipticregularity:varphi}
\begin{alignat}{2}
-B \Laplace u  + u & = \mu_{k} - A \Pi_{k} \left ( \Psi'(u) \right ) + \chi \sigma_{k} + u && \text{ in } \Omega, \label{ellipticregular} \\
\pdnu u & = 0 && \text{ on } \pd \Omega.
\end{alignat}
\end{subequations}
For the case where $\Psi$ satisfies \eqref{assump:Psiquadratic}, as $\{\varphi_{k}\}_{k \in \N}$ is bounded in $L^{\infty}(0,T;H^{1})$, we have that $\{ \Psi'(\varphi_{k})\}_{k \in \N}$ is also bounded in $L^{\infty}(0,T;H^{1})$.  Using the fact that our basis functions $\{w_{i}\}_{i \in \N}$ are the eigenfunctions of the inverse Neumann-Laplacian operator and is therefore orthogonal in $H^{1}$, and the Sobolev embedding $H^{1} \subset L^{r}$ for $r \in [1,6]$, there exists a positive constant $C$ independent of $\varphi_{k}$ such that
\begin{align}\label{Projection:Psi}
\norm{\Pi_{k}(\Psi'(\varphi_{k}))}_{X} \leq C\norm{\Psi'(\varphi_{k})}_{X} \quad \text{ for } X = H^{1} \text{ or } L^{r}, 1 \leq r \leq 6.
\end{align}
Then, this implies that $\{\Pi_{k} \left ( \Psi'(\varphi_{k} ) \right )\}_{k \in \N}$ is also bounded in $L^{\infty}(0,T;H^{1})$.  As the right-hand side of \eqref{ellipticregular} belongs to $H^{1}$ for a.e. $t \in (0,T)$, and the boundary $\pd \Omega$ is $C^{3}$, by elliptic regularity theory, we have
\begin{align}\label{Galerkin:L2H3}
\norm{\varphi_{k}}_{L^{2}(H^{3})} \leq C \left (1 + \norm{\varphi_{k}}_{L^{2}(H^{1})} + \norm{\mu_{k} + \chi \sigma_{k}}_{L^{2}(H^{1})}  \right ) \leq \mathcal{E},
\end{align}
for some positive constant $C$ depending only on $\Omega$ and $R_{4}$.  For the case where $\Psi$ satisfies \eqref{assump:PsiSuperquadratic}, we employ a bootstrap argument from \cite[\S 3.3]{article:GarckeLamDirichlet}.  The growth assumption \eqref{assump:PsiSuperquadratic} implies that
\begin{align}\label{L2H3:Psi:growth:superlinear}
\abs{\Psi'(y)} \leq C \left ( 1 + \abs{y}^{m} \right ), \quad \abs{\Psi''(y)} \leq C \left ( 1 + \abs{y}^{m-1} \right ) \text{ for } m \in [1,5).
\end{align}
For fixed $m \in [1,5)$, we define a sequence of positive numbers $\{l_{j}\}_{j \in \N}$ by
\begin{align*}
l_{1}m \leq 6, \quad l_{j+1} = \frac{6 l_{j}}{6 - (5-m)l_{j}}.
\end{align*}
It can be shown that $\{l_{j}\}_{j \in \N}$ is a strictly increasing sequence such that $l_{j} \to \infty$ as $j \to \infty$.
The Gagliardo--Nirenberg inequality \eqref{GagNirenIneq} then yields the following continuous embedding
\begin{align}\label{Regularity:Bootstrap}
L^{2}(0,T;W^{2,l_{j}}) \cap L^{\infty}(0,T;L^{6}) \subset L^{2m}(0,T;L^{m l_{j+1}}).
\end{align}
At the first step, the boundedness of $\{\varphi_{k}\}_{k \in \N}$ in $L^{\infty}(0,T;H^{1})$ yields
\begin{align*}
\norm{\Pi_{k}(\Psi'(\varphi_{k}))}_{L^{l_{1}}}^{2} \leq C \left ( 1 + \norm{\varphi_{k}}_{L^{6}}^{2m} \right ),
\end{align*}
which implies that $\{\Pi_{k}(\Psi'(\varphi_{k})) \}_{k \in \N}$ is bounded in $L^{2}(0,T;L^{l_{1}})$.  As the other terms on the right-hand side of \eqref{ellipticregularity:varphi} are bounded in $L^{2}(0,T;H^{1})$, elliptic regularity then yields that $\{\varphi_{k}\}_{k \in \N}$ is bounded in $L^{2}(0,T;W^{2,l_{1}})$, and thus in $L^{2m}(0,T;L^{ml_{2}})$ by \eqref{Regularity:Bootstrap}.

At the $j$-th step, we have $\{\varphi_{k}\}_{k \in \N}$ is bounded in $L^{2}(0,T,W^{2,l_{j}}) \cap L^{2m}(0,T;L^{m l_{j+1}})$.  Then, it holds that
\begin{align*}
\norm{\Pi_{k}(\Psi'(\varphi_{k}))}_{L^{2}(L^{l_{j+1}})}^{2} \leq C \left ( 1 + \norm{\varphi_{k}}_{L^{2m}(L^{m l_{j+1}})}^{2m} \right ),
\end{align*}
and so $\{\Pi_{k}(\Psi'(\varphi_{k})) \}_{k \in \N}$ is bounded in $L^{2}(0,T;L^{l_{j+1}})$.  Elliptic regularity then implies that $\{\varphi_{k}\}_{k \in \N}$ is bounded in $L^{2}(0,T;W^{2,l_{j+1}})$.

We terminate the bootstrapping procedure once $l_{j} \geq 6$ for some $j \in \N$.  This occurs after a finite number of steps as $\lim_{j \to \infty} l_{j} = \infty $.  Altogether, we obtain that  $\{\varphi_{k}\}_{k \in \N}$ is bounded in $L^{2}(0,T;W^{2,6})$.  From \eqref{L2H3:Psi:growth:superlinear} it holds that
\begin{align*}
\abs{\Psi''(\varphi_{k}) \nabla \varphi_{k}}^{2} \leq C \left ( 1 + \abs{\varphi_{k}}^{2m-2} \right ) \abs{\nabla \varphi_{k}}^{2} \text{ for } m \in [1,5),
\end{align*}
and by the following continuous embeddings obtain from the Gagliardo--Nirenberg inequality \eqref{GagNirenIneq},
\begin{align*}
L^{2}(0,T;W^{2,6}) \cap L^{\infty}(0,T;H^{1}) \subset L^{2m}(0,T;L^{2m}) \cap L^{2m-2}(0,T;L^{\infty}) \text{ for } m \in [1,5),
\end{align*}
we find that $\{\Pi_{k}(\Psi'(\varphi_{k}))\}_{k \in \N}$ is bounded in $L^{2}(0,T;H^{1})$.  Applying elliptic regularity once more leads to the boundedness of $\{\varphi_{k}\}_{k \in \N}$ in $L^{2}(0,T;H^{3})$.  Consequently, the hypotheses of Lemma \ref{lem:reg:pressurevelocity} are satisfied and we obtain that
\begin{align*}
\norm{p_{k}}_{L^{\frac{8}{5}}(H^{1})} \leq \mathcal{E},
\end{align*}
which implies that the Galerkin ansatz $p_{k}$ can be extended to the interval $[0,T]$.

\subsection{Estimates for the convection terms and the time derivatives}\label{sec:convectionandtimederivative}
By the Gagliardo--Nirenberg inequality \eqref{GagNirenIneq} with $j = 0$, $p = \infty$, $m = 3$, $r = 2$, $q = 2$ and $d = 3$, we have
\begin{align*}
\norm{\varphi_{k}}_{L^{\infty}} \leq C \norm{\varphi_{k}}_{H^{3}}^{\frac{1}{4}} \norm{\varphi_{k}}_{L^{6}}^{\frac{3}{4}} \leq C \norm{\varphi_{k}}_{H^{3}}^{\frac{1}{4}} \norm{\varphi_{k}}_{H^{1}}^{\frac{3}{4}}.
\end{align*}
For any $\zeta \in L^{\frac{8}{3}}(0,T;H^{1})$ with coefficients $\{\zeta_{kj}\}_{1 \leq j \leq k} \subset \R^{k}$ such that $\Pi_{k} \zeta = \sum_{j=1}^{k} \zeta_{kj} w_{j}$, we can estimate
\begin{equation}\label{convectionvarphiv}
\begin{aligned}
 \abs{\int_{0}^{T} \int_{\Omega} \varphi_{k} \bm{v}_{k} \cdot \nabla \Pi_{k} \zeta \dx \dt} & \leq \int_{0}^{T} \norm{\bm{v}_{k}}_{\bm{L}^{2}} \norm{\varphi_{k}}_{L^{\infty}} \norm{\nabla \Pi_{k} \zeta}_{\bm{L}^{2}} \dt \\
& \leq  C \norm{\varphi_{k}}_{L^{\infty}(H^{1})}^{\frac{3}{4}} \norm{\bm{v}_{k}}_{L^{2}(\bm{L}^{2})} \norm{\varphi_{k}}_{L^{2}(H^{3})}^{\frac{1}{4}} \norm{\zeta}_{L^{\frac{8}{3}}(H^{1})}.
\end{aligned}
\end{equation}
Using \eqref{unified:energyineq} and \eqref{Galerkin:L2H3}, we find that
\begin{align}\label{divvarphikvk}
\norm{\div (\varphi_{k} \bm{v}_{k})}_{L^{\frac{8}{5}}((H^{1})^{*})} \leq K^{\frac{1}{2}} \mathcal{E}.
\end{align}
Next, multiplying \eqref{Galerkin:varphi} by $\zeta_{kj}$, summing from $j = 1$ to $k$ and then integrating in time from $0$ to $T$ leads to
\begin{align*}
\abs{\int_{0}^{T} \int_{\Omega} \pd_{t}\varphi_{k} \zeta \dx \dt}  & \leq \int_{0}^{T} m_{1} \norm{\nabla \mu_{k}}_{\bm{L}^{2}} \norm{\nabla \Pi_{k} \zeta}_{\bm{L}^{2}} \dt \\
& + \int_{0}^{T} \norm{\Gamma_{\varphi,k}}_{L^{2}} \norm{\Pi_{k} \zeta}_{L^{2}} + \norm{\div(\varphi_{k} \bm{v}_{k})}_{(H^{1})^{*}} \norm{\Pi_{k} \zeta}_{H^{1}} \dt.
\end{align*}
By \eqref{assump:Sourcetermspecificform},  \eqref{assump:ThetaLambda} and \eqref{unified:energyineq}, we find that
\begin{align*}
\norm{\Gamma_{\varphi,k}}_{L^{2}(L^{2})} \leq C(R_{0}, \abs{\Omega}, T) \left (1 + \norm{\varphi_{k}}_{L^{2}(L^{2})} + \norm{\mu_{k}}_{L^{2}(L^{2})} + \norm{\sigma_{k}}_{L^{2}(L^{2})} \right ) \leq \mathcal{E},
\end{align*}
and so, by H\"{o}lder's inequality, we find that
\begin{align*}
 \abs{\int_{0}^{T} \int_{\Omega} \pd_{t}\varphi_{k} \zeta \dx \dt} & \leq \left ( \mathcal{E} T^{\frac{1}{8}} + \norm{\div (\varphi_{k}\bm{v}_{k})}_{L^{\frac{8}{5}}((H^{1})^{*})} \right ) \norm{\zeta}_{L^{\frac{8}{3}}(H^{1})} .
\end{align*}
Taking the supremum over $\zeta \in L^{\frac{8}{3}}(0,T;H^{1})$ and using \eqref{unified:energyineq} and \eqref{divvarphikvk} yields that
\begin{align}\label{Galerkin:pdtvarphi}
\norm{\pd_{t}\varphi_{k}}_{L^{\frac{8}{5}}((H^{1})^{*})} \leq \mathcal{E} \left (1 + K^{\frac{1}{2}} \right ),
\end{align}
Similarly, by H\"{o}lder's inequality and the following Gagliardo--Nirenberg inequality \eqref{GagNirenIneq} with $j = 0$, $r = 2$, $m = 1$, $p = \frac{10}{3}$, $q = 2$ and $d = 3$,
\begin{align*}
\norm{f}_{L^{\frac{10}{3}}} \leq C \norm{f}_{H^{1}}^{\frac{3}{5}} \norm{f}_{L^{2}}^{\frac{2}{5}},
\end{align*}
which in turn implies that $\{\sigma_{k}\}_{k \in \N}$ is bounded uniformly in $L^{\frac{10}{3}}(Q)$.  Then, we find that for any $\zeta \in L^{5}(0,T;W^{1,5})$,
\begin{equation}\label{convectionsigmav}
\begin{aligned}
\abs{\int_{0}^{T} \int_{\Omega} \sigma_{k} \bm{v}_{k} \cdot \nabla \Pi_{k} \zeta \dx \dt} & \leq \int_{0}^{T} \norm{\sigma_{k}}_{L^{\frac{10}{3}}} \norm{\bm{v}_{k}}_{\bm{L}^{2}} \norm{\nabla \zeta}_{\bm{L}^{5}} \dt \\
& \leq \norm{\sigma_{k}}_{L^{\frac{10}{3}}(Q)} \norm{\bm{v}_{k}}_{L^{2}(\bm{L}^{2})} \norm{\nabla \zeta}_{L^{5}(\bm{L}^{5})},
\end{aligned}
\end{equation}
and
\begin{align}\label{divsigmakvk}
\norm{\div (\sigma_{k} \bm{v}_{k})}_{L^{\frac{5}{4}}((W^{1,5})^{*})} \leq K^{\frac{1}{2}} \mathcal{E}.
\end{align}
A similar calculation to \eqref{Galerkin:pdtvarphi} yields that
\begin{align}\label{Galerkin:pdtsigma}
\norm{\pd_{t}\sigma_{k}}_{L^{\frac{5}{4}}((W^{1,5})^{*})} \leq \mathcal{E} \left (1 + K^{\frac{1}{2}} \right ).
\end{align}

\begin{remark}
We may also use the Gagliardo--Nirenberg inequality to deduce that
\begin{align*}
\norm{f}_{L^{r}} \leq C \norm{f}_{H^{1}}^{\frac{3(r-2)}{2r}} \norm{f}_{L^{2}}^{\frac{6-r}{2r}} \text{ for any } r \in (2,6).
\end{align*}
Then, the computation \eqref{convectionsigmav} becomes
\begin{align*}
\abs{\int_{0}^{T} \int_{\Omega} \sigma_{k} \bm{v}_{k} \cdot \nabla \Pi_{k} \zeta \dx \dt} \leq C \norm{\bm{v_{k}}}_{L^{2}(\bm{L}^{2})} \norm{\sigma_{k}}_{L^{\infty}(L^{2})}^{\frac{6-r}{2r}} \norm{\sigma_{k}}_{L^{2}(H^{1})}^{\frac{3(r-2)}{2r}} \norm{\nabla \zeta}_{L^{\frac{4r}{6-r}}(\bm{L}^{\frac{2r}{r-2}})},
\end{align*}
which implies that $\{ \div (\sigma_{k} \bm{v}_{k}) \}_{k \in \N}$ and $\{ \pd_{t}\sigma_{k}\}_{k \in \N}$ are bounded uniformly in
\begin{align*}
L^{\frac{4r}{5r-6}}(0,T;(W^{1,\frac{2r}{r-2}})^{*}) \text{ for } r \in (2,6).
\end{align*}
Note that the temporal exponent decreases while the spatial exponent increases as $r$ increases, and they intersect at the point $r = \frac{10}{3}$.
\end{remark}

Here we point out that even with the improved regularity $\bm{v}_{k} \in L^{\frac{8}{7}}(0,T;\bm{H}^{1})$, we are unable to show $\div(\sigma_{k} \bm{v}_{k})$ is bounded in the dual space $(H^{1})^{*}$.  Indeed, let $q$, $r > 1$ be constants yet to be determined such that $\frac{1}{q} + \frac{1}{r} = \frac{1}{2}$.  Then, from H\"{o}lder's inequality we have
\begin{align*}
\abs{\int_{0}^{T} \int_{\Omega} \sigma_{k} \bm{v}_{k} \cdot \nabla \Pi_{k} \zeta \dx \dt} \leq \int_{0}^{T} \norm{\sigma_{k}}_{L^{q}} \norm{\bm{v}_{k}}_{\bm{L}^{r}} \norm{\nabla \zeta}_{\bm{L}^{2}} \dt .
\end{align*}
By the Gagliardo--Nirenberg inequality we have for $\alpha = \frac{3}{2} - \frac{3}{q} \leq 1$, $\beta = \frac{3}{2} - \frac{3}{r} \leq 1$,
\begin{align*}
& \abs{\int_{0}^{T} \int_{\Omega} \sigma_{k} \bm{v}_{k} \cdot \nabla \Pi_{k} \zeta \dx \dt} \leq C \int_{0}^{T} \norm{\sigma_{k}}_{L^{2}}^{1-\alpha} \norm{\sigma_{k}}_{H^{1}}^{\alpha} \norm{\bm{v}_{k}}_{\bm{H}^{1}}^{\beta} \norm{\bm{v}_{k}}_{\bm{L}^{2}}^{1-\beta} \norm{\nabla \zeta}_{\bm{L}^{2}} \dt \\
& \quad  \leq C \norm{\sigma_{k}}_{L^{\infty}(L^{2})}^{1-\alpha} \int_{0}^{T} \norm{\sigma_{k}}_{H^{1}}^{\alpha} \norm{\bm{v}_{k}}_{\bm{H}^{1}}^{\beta} \norm{\bm{v}_{k}}_{\bm{L}^{2}}^{1-\beta} \norm{\zeta}_{H^{1}} \dt \\
& \quad \leq C \norm{\sigma_{k}}_{L^{\infty}(L^{2})}^{1-\alpha} \norm{\sigma_{k}}_{L^{\alpha x_{1}}(H^{1})}^{\alpha} \norm{\bm{v}_{k}}_{L^{\beta x_{2}}(\bm{H}^{1})}^{\beta} \norm{\bm{v}_{k}}_{L^{(1-\beta)x_{3}}(\bm{L}^{2})}^{1-\beta} \norm{\zeta}_{L^{x_{4}}(H^{1})},
\end{align*}
where
\begin{align}\label{Holderineq:constraints}
\frac{1}{x_{1}} + \frac{1}{x_{2}} + \frac{1}{x_{3}} + \frac{1}{x_{4}} = 1, \quad \alpha x_{1} \leq 2, \quad \beta x_{2} \leq \frac{8}{7}, \quad (1-\beta) x_{3} \leq 2.
\end{align}
Note that $\alpha = \frac{3}{2} - \frac{3}{q} = \frac{3}{r}$, and then substituting into the constraints \eqref{Holderineq:constraints} we find that
\begin{align}
\frac{1}{x_{1}} + \frac{1}{x_{2}} + \frac{1}{x_{3}} \geq \frac{\alpha}{2} + \frac{7}{8} \beta + \frac{1-\beta}{2} = \frac{3}{2r} + \frac{21}{16} - \frac{21}{8r} + \frac{3}{2r} - \frac{1}{4} = \frac{17}{16} + \frac{3}{8}r > 1.
\end{align}
Hence, we cannot find $x_{1}$, $x_{2}$, $x_{3}$ and $x_{4}$ satisfying \eqref{Holderineq:constraints} and we are unable to deduce that $\div (\sigma_{k} \bm{v}_{k})$ lies in the dual space $(H^{1})^{*}$ even with the improved regularity $\bm{v}_{k} \in L^{\frac{8}{7}}(0,T; \bm{H}^{1})$.

\section{Passing to the limit}\label{sec:passlimit}
From \eqref{unified:energyineq}, \eqref{Galerkin:L2H3}, \eqref{pressure:L8/5H1bdd}, \eqref{pressure:L8/7H2bdd}, \eqref{velocityL8/7H1bdd}, \eqref{divvarphikvk}, \eqref{Galerkin:pdtvarphi}, \eqref{divsigmakvk}, \eqref{Galerkin:pdtsigma}, we find that
\begin{align*}
\{\varphi_{k}\}_{k \in \N} & \text{ bounded in } L^{\infty}(0,T;H^{1}) \cap L^{2}(0,T;H^{3}), \\
\{ \pd_{t}\varphi_{k}\}_{k \in \N}, \{ \div (\varphi_{k} \bm{v}_{k})\}_{k \in \N} & \text{ bounded in } L^{\frac{8}{5}}(0,T;(H^{1})^{*}), \\
\{\sigma_{k}\}_{k \in \N} &\text{ bounded in } L^{\infty}(0,T;L^{2}) \cap L^{2}(0,T;H^{1}) \cap L^{2}(\Sigma), \\
\{\pd_{t}\sigma_{k}\}_{k \in \N}, \{ \div (\sigma_{k} \bm{v}_{k})\}_{k \in \N} & \text{ bounded in } L^{\frac{5}{4}}(0,T;(W^{1,5})^{*}), \\
\{\mu_{k}\}_{k \in \N } & \text{ bounded in } L^{2}(0,T;H^{1}), \\
\{p_{k}\}_{k \in \N} & \text{ bounded in } L^{\frac{8}{5}}(0,T;H^{1}) \cap L^{\frac{8}{7}}(0,T;H^{2}), \\
\{ \bm{v}_{k}\}_{k \in \N} & \text{ bounded in } L^{2}(0,T; \bm{L}^{2}) \cap L^{\frac{8}{7}}(0,T;\bm{H}^{1}).
\end{align*}
By standard compactness results (Banach--Alaoglu theorem and reflexive weak compactness theorem), and \cite[\S 8, Cor. 4]{article:Simon86}, and the compact embeddings in dimension 3 (see \cite[Thm. 6.3]{book:AdamsFournier} and \cite[Thm. 11.2, p. 31]{book:Friedman})
\begin{align*}
H^{j+1}(\Omega) = W^{j+1,2}(\Omega) \subset \subset W^{j,q}(\Omega) \quad \forall j \geq 0, j \in \Z, \; 1 \leq q < 6,
\end{align*}
and the compact embedding $L^{2} \subset \subset (H^{1})^{*}$, we obtain, for a relabelled subsequence, the following weak/weak-* convergences:
\begin{alignat*}{3}
\displaybreak[3]
\varphi_{k} & \rightarrow \varphi && \quad \text{ weakly-}* && \quad \text{ in } L^{\infty}(0,T;H^{1}) \cap L^{2}(0,T;H^{3}) \cap W^{1,\frac{8}{5}}(0,T;(H^{1})^{*}), \\
\sigma_{k} & \rightarrow \sigma && \quad \text{ weakly-}* && \quad \text{ in } L^{2}(0,T;H^{1}) \cap L^{\infty}(0,T;L^{2}) \cap L^{2}(\Sigma), \\
\pd_{t}\sigma_{k} & \rightarrow \pd_{t}\sigma && \quad \text{ weakly } && \quad \text{ in } L^{\frac{5}{4}}(0,T;(W^{1,5})^{*}), \\
\mu_{k} & \rightarrow \mu && \quad \text{ weakly } && \quad \text{ in } L^{2}(0,T;H^{1}), \\
p_{k} & \rightarrow p && \quad \text{ weakly } && \quad \text{ in } L^{\frac{8}{5}}(0,T;H^{1}) \cap L^{\frac{8}{7}}(0,T;H^{2}), \\
\bm{v}_{k} & \rightarrow \bm{v} && \quad \text{ weakly } && \quad \text{ in } L^{2}(0,T;\bm{L}^{2}) \cap L^{\frac{8}{7}}(0,T;\bm{H}^{1}), \\
\div(\varphi_{k} \bm{v}_{k}) & \rightarrow \xi && \quad \text{ weakly } && \quad \text{ in } L^{\frac{8}{5}}(0,T;(H^{1})^{*}), \\
\div(\sigma_{k} \bm{v}_{k})& \rightarrow \theta && \quad \text{ weakly } && \quad \text{ in } L^{\frac{5}{4}}(0,T;(W^{1,5})^{*}),
\end{alignat*}
and the following strong convergences:
\begin{alignat*}{3}
\varphi_{k} & \rightarrow \varphi && \quad \text{ strongly } && \quad \text{ in } C^{0}([0,T];L^{r}) \cap L^{2}(0,T;W^{2,r}) \text{ and a.e. in } Q, \\
\sigma_{k} & \rightarrow \sigma && \quad \text{ strongly } && \quad \text{ in } C^{0}([0,T]; (H^{1})^{*}) \cap L^{2}(0,T;L^{r}) \text{ and a.e. in } Q,
\end{alignat*}
for any $r \in [1,6)$ and some functions $\xi \in L^{\frac{8}{5}}(0,T;(H^{1})^{*})$, $\theta \in L^{\frac{5}{4}}(0,T;(W^{1,5})^{*})$.

For the rest of this section, we fix $1 \leq j \leq k$ and $\delta \in C^{\infty}_{c}(0,T)$.  Then, we have $\delta(t) w_{j} \in C^{\infty}(0,T;H^{2})$.  By continuity of $m(\cdot)$, we see that $m(\varphi_{k}) \to m(\varphi)$ a.e. in $Q$.  Thanks to the boundedness of $m(\cdot)$, applying Lebesgue's dominated convergence theorem to $(m(\varphi_{k}) - m(\varphi))^{2} \abs{\delta \nabla w_{j}}^{2}$ yields
\begin{align*}
\norm{m(\varphi_{k}) \delta \nabla w_{j} - m(\varphi) \delta \nabla w_{j}}_{L^{2}(Q)} \to 0 \text{ as } k \to \infty.
\end{align*}
Together with the weak convergence $\mu_{k} \rightharpoonup \mu$ in $L^{2}(0,T;H^{1})$, we obtain by the product of weak-strong convergence
\begin{align*}
\int_{0}^{T} \int_{\Omega} m(\varphi_{k}) \delta \nabla w_{j} \cdot \nabla \mu_{k} \dx \dt \to \int_{0}^{T} \int_{\Omega} m(\varphi) \delta \nabla w_{j} \cdot \nabla \mu \dx \dt \text{ as } k \to \infty.
\end{align*}
Terms involving $n(\cdot)$ can be dealt with in a similar fashion.  For the source term $\Gamma_{\varphi,k} = \Lambda_{\varphi}(\varphi_{k}, \sigma_{k}) - \Theta_{\varphi}(\varphi_{k}, \sigma_{k})\mu_{k}$, by the continuity and boundedness of $\Theta_{\varphi}$, the a.e. convergence of $\varphi_{k} \to \varphi$ and $\sigma_{k} \to \sigma$ in $Q$, we may apply Lebesgue's dominated convergence theorem to deduce that
\begin{align*}
\int_{0}^{T} \int_{\Omega} \abs{\delta w_{j} (\Theta_{\varphi}(\varphi_{k}, \sigma_{k}) - \Theta_{\varphi}(\varphi, \sigma))}^{2} \dx \dt \to 0 \text{ as } k \to \infty,
\end{align*}
that is, we obtain the strong convergence $\delta w_{j} \Theta_{\varphi}(\varphi_{k}, \sigma_{k}) \to \delta w_{j} \Theta_{\varphi}(\varphi, \sigma)$ in $L^{2}(Q)$.  Hence, the weak convergence $\mu_{k} \rightharpoonup \mu$ in $L^{2}(0,T;H^{1})$ yields
\begin{align*}
\int_{0}^{T} \int_{\Omega} \delta w_{j} \Theta_{\varphi}(\varphi_{k}, \sigma_{k}) \mu_{k} \dx \dt \to \int_{0}^{T} \int_{\Omega} \delta w_{j} \Theta_{\varphi}(\varphi, \sigma) \mu \dx \dt \text{ as } k \to \infty.
\end{align*}
Meanwhile, by the triangle inequality $\abs{\abs{a} - \abs{b}} \leq \abs{a-b}$, and H\"{o}lder's inequality, we obtain
\begin{align*}
\int_{0}^{T} \int_{\Omega} \abs{( \abs{\varphi_{k}} - \abs{\varphi}) (\delta w_{j})} \dx \dt \leq \norm{\varphi_{k} - \varphi}_{L^{2}(0,T;L^{2})} \norm{\delta w_{j}}_{L^{2}(0,T;L^{2})} \to 0
\end{align*}
and
\begin{align*}
\int_{0}^{T} \int_{\Omega} \abs{( \abs{\sigma_{k}} - \abs{\sigma}) (\delta w_{j})} \dx \dt \leq \norm{\sigma_{k} - \sigma}_{L^{2}(0,T;L^{2})} \norm{\delta w_{j}}_{L^{2}(0,T;L^{2})} \to 0
\end{align*}
as $k \to \infty$.  In particular, we have
\begin{align*}
(1 + \abs{\varphi_{k}} + \abs{\sigma_{k}}) \abs{\delta w_{j}} \to (1+\abs{\varphi} + \abs{\sigma}) \abs{\delta w_{j}} \text{ strongly in } L^{1}(Q) \text{ as } k \to \infty.
\end{align*}
By the continuity of $\Lambda_{\varphi}$ we have
\begin{align*}
\Lambda_{\varphi}(\varphi_{k}, \sigma_{k}) \to \Lambda_{\varphi}(\varphi, \sigma) \text{ a.e. as } k \to \infty, \quad \abs{\Lambda_{\varphi}(\varphi_{k}, \sigma_{k}) \delta w_{j}} \leq R_{0}(1 + \abs{\varphi_{k}} + \abs{\sigma_{k}}) \abs{\delta w_{j}}.
\end{align*}
Then, the generalised Lebesgue dominated convergence theorem (see \cite[Thm. 1.9, p. 89]{book:Royden}, or \cite[Thm. 3.25, p. 60]{book:Alt}) yields
\begin{align*}
\Lambda_{\varphi}(\varphi_{k}, \sigma_{k}) \delta w_{j} \to \Lambda_{\varphi}(\varphi, \sigma) \delta w_{j} \text{ strongly in } L^{1}(Q) \text{ as } k \to \infty,
\end{align*}
which leads to
\begin{align}\label{Gammavarphi:converegence}
\int_{0}^{T} \int_{\Omega} \Gamma_{\varphi}(\varphi_{k}, \mu_{k}, \sigma_{k}) \delta w_{j} \dx \dt \to \int_{0}^{T} \int_{\Omega} \Gamma_{\varphi}(\varphi, \mu, \sigma) \delta w_{j} \dx \dt \text{ as } k \to \infty.
\end{align}
The same arguments can be applied for the source term $\mathcal{S}$ and for the derivative $\Psi'(\varphi)$ satisfying the linear growth condition \eqref{assump:Psiquadratic}.  For potentials satisfying the growth condition \eqref{assump:PsiSuperquadratic}, we refer to the argument in \cite[\S 3.1.2]{article:GarckeLamDirichlet}.

To identify the limits $\xi$ and $\theta$ of the convection terms $\div (\varphi_{k} \bm{v}_{k})$ and $\div (\sigma_{k} \bm{v}_{k})$, respectively, we argue as follows.  Since $\delta w_{j} \in C^{\infty}(0,T;H^{2}) \subset L^{\frac{8}{3}}(0,T;H^{1})$, by the weak convergence $\div (\varphi_{k} \bm{v}_{k}) \rightharpoonup \xi$ in $L^{\frac{8}{5}}(0,T;(H^{1})^{*})$, we have
\begin{align*}
\int_{0}^{T} \int_{\Omega} \div (\varphi_{k} \bm{v}_{k})  \delta w_{j} \dx \dt \to \int_{0}^{T} \inner{\xi}{w_{j}}_{H^{1}, (H^{1})^{*}} \delta \dt \text{ as } k \to \infty.
\end{align*}
Next, applying integrating by parts and by the boundary conditions $\bm{v}_{k} \cdot \bm{n} = 0$ on $\pd \Omega$ (see \eqref{Galerkinvkspace}), we see that
\begin{align}\label{Galerkin:limit:convectionterm}
\int_{0}^{T} \int_{\Omega} \div (\varphi_{k} \bm{v}_{k}) \delta w_{j} \dx \dt = - \int_{0}^{T} \int_{\Omega} \delta \varphi_{k} \bm{v}_{k} \cdot \nabla w_{j} \dx \dt.
\end{align}
Moreover, we claim that $\delta  \varphi_{k} \nabla w_{j}$ converges strongly to $\delta  \varphi \nabla w_{j}$ in $L^{2}(0,T;\bm{L}^{2})$.  Indeed, we compute
\begin{align*}
\int_{0}^{T} \int_{\Omega} \abs{\delta}^{2} \abs{\nabla w_{j}}^{2} \abs{\varphi_{k} - \varphi}^{2} \dx \dt & \leq \int_{0}^{T} \abs{\delta}^{2} \norm{\nabla w_{j}}_{L^{6}}^{2} \norm{\varphi_{k} - \varphi}_{L^{3}}^{2} \dt \\
& \leq \norm{w_{j}}_{H^{2}}^{2} \norm{\delta}_{L^{\infty}(0,T)}^{2} \norm{\varphi_{k} - \varphi}_{L^{2}(L^{3})}^{2} \to 0
\end{align*}
as $k \to \infty$ by the strong convergence $\varphi_{k} \to \varphi$ in $L^{2}(0,T;L^{r})$ for $r \in [1,6)$.  Together with the weak convergence $\bm{v}_{k} \rightharpoonup \bm{v}$ in $L^{2}(0,T;\bm{L}^{2})$, when passing to the limit $k \to \infty$ in \eqref{Galerkin:limit:convectionterm} we find that
\begin{align*}
\int_{0}^{T} \inner{\xi}{w_{j}}_{H^{1}, (H^{1})^{*}} \delta \dt = - \int_{0}^{T} \int_{\Omega} \delta \varphi \bm{v} \cdot \nabla w_{j}  \dx \dt.
\end{align*}
Applying integration by parts on the right-hand side shows that $\xi = \div (\varphi \bm{v})$ in the sense of distributions.

Now considering $\delta(t)w_{j}$ as an element in $L^{5}(0,T;W^{1,5})$, a similar argument can be used to show $\theta = \div(\sigma \bm{v})$ in the sense of distributions using the strong convergence $\sigma_{k} \to \sigma$ in $L^{2}(0,T;L^{r})$ for $r \in [1,6)$, the weak convergence $\bm{v}_{k} \rightharpoonup \bm{v}$ in $L^{2}(0,T;\bm{L}^{2})$, and the weak convergence $\div(\sigma_{k} \bm{v}_{k}) \rightharpoonup \phi$ in $L^{\frac{5}{4}}(0,T;(W^{1,5})^{*})$.

For the pressure and the velocity, we apply $-\Laplace_{N}$ to both sides of  \eqref{Galerkin:pressure} and test with $w_{j}$, then integrating by parts leads to
\begin{align*}
\int_{\Omega} \nabla p_{k} \cdot \nabla w_{j} \dx = \int_{\Omega} \frac{1}{K} \Gamma_{\bm{v}} w_{j} +(\mu_{k} + \chi \sigma_{k}) \nabla \varphi_{k} \cdot \nabla w_{j} \dx.
\end{align*}
Multiplying by $\delta(t)$, integrating in time and passing to the limit $k \to \infty$, keeping in mind the weak convergences $p_{k} \rightharpoonup p$ in $L^{\frac{8}{5}}(0,T;H^{1})$, $\mu_{k} \rightharpoonup \mu$ in $L^{2}(0,T;H^{1})$, $\sigma_{k} \rightharpoonup \sigma$ in $L^{2}(0,T;H^{1})$, and the strong convergence $\varphi_{k} \to \varphi$ in $L^{2}(0,T;W^{2,r})$ for $r \in [1,6)$ leads to
\begin{align}\label{limit:pressure}
\int_{0}^{T} \int_{\Omega} \delta(t) \nabla p \cdot \nabla w_{j} \dx \dt = \int_{0}^{T} \int_{\Omega} \delta(t) \left ( \frac{1}{K} \Gamma_{\bm{v}} w_{j} + (\mu + \chi \sigma) \nabla \varphi \cdot \nabla w_{j} \right )\dx \dt.
\end{align}
Here we used that $w_{j} \in H^{2}$, and
\begin{equation}\label{pressuretermstrongconvergence}
\begin{aligned}
& \int_{0}^{T} \int_{\Omega} \abs{\delta}^{2} \abs{\nabla \varphi_{k} - \nabla \varphi}^{2} \abs{\nabla w_{j}}^{2} \dx \dt \\
& \quad \leq  \norm{\delta}_{L^{\infty}(0,T)}^{2} \norm{w_{j}}_{W^{1,6}}^{2} \norm{\varphi_{k} - \varphi}_{L^{2}(W^{1,3})}^{2} \to 0 \text{ as } k \to \infty,
\end{aligned}
\end{equation}
to deduce that $\delta \nabla \varphi_{k} \cdot \nabla w_{j} \to \delta \nabla \varphi \cdot \nabla w_{j}$ in $L^{2}(0,T;L^{2})$.  Fix $1 \leq j_{1}, j_{2}, j_{3} \leq k$, and define $\bm{\zeta} = (w_{j_{1}}, w_{j_{2}}, w_{j_{3}})^{\top}$.  Then, we can consider $\delta(t) \bm{\zeta}$ as an element in $L^{\frac{8}{3}}(0,T;\bm{L}^{2}) \subset L^{2}(0,T;\bm{L}^{2})$.  Taking the scalar product of \eqref{Galerkin:velocity} with $\delta \bm{\zeta}$, integrating over $\Omega$ and in time from $0$ to $T$ leads to
\begin{align}\label{pressurevelocitylimit}
\int_{0}^{T} \int_{\Omega} \delta(t) (\bm{v}_{k} + K \nabla p_{k} ) \cdot \bm{\zeta} \dx \dt = \int_{0}^{T} \int_{\Omega} \delta(t) K (\mu_{k} + \chi \sigma_{k}) \nabla \varphi_{k} \cdot \bm{\zeta} \dx \dt.
\end{align}
By the weak convergences $\bm{v}_{k} \rightharpoonup \bm{v}$ in $L^{2}(0,T;\bm{L}^{2})$, $\mu_{k} \rightharpoonup \mu$ in $L^{2}(0,T;H^{1})$, $\sigma_{k} \rightharpoonup \sigma$ in $L^{2}(0,T;H^{1})$, $\nabla p_{k} \rightharpoonup \nabla p$ in $L^{\frac{8}{5}}(0,T;\bm{L}^{2})$, and the strong convergence $\delta \nabla \varphi_{k} \cdot \bm{\zeta} \to \delta \nabla \varphi \cdot \bm{\zeta}$ in $L^{2}(0,T; L^{2})$ (which is proved in a similar manner as \eqref{pressuretermstrongconvergence}), we find that passing to the limit in \eqref{pressurevelocitylimit} yields
\begin{align}\label{limit:velocity}
\int_{0}^{T} \int_{\Omega} \delta(t) (\bm{v} + K \nabla p) \cdot \bm{\zeta} \dx \dt = \int_{0}^{T} \int_{\Omega}  \delta (t) K(\mu + \chi \sigma) \nabla \varphi \cdot \bm{\zeta} \dx \dt.
\end{align}
Then, multiplying \eqref{Galerkin:system} with $\delta \in C^{\infty}_{c}(0,T)$, integrating with respect to time from $0$ to $T$, and passing to the limit $k \to \infty$, we obtain
\begin{equation*}
\begin{aligned}
\int_{0}^{T} \delta(t) \inner{\pd_{t}\varphi}{w_{j}}_{H^{1}, (H^{1})^{*}} \dt & = \int_{0}^{T} \int_{\Omega} \delta(t) \left ( -m(\varphi) \nabla \mu \cdot \nabla w_{j} + \Gamma_{\varphi} w_{j} + \varphi \bm{v} \cdot \nabla w_{j} \right ) \dx \dt, \\
\int_{0}^{T} \int_{\Omega} \delta(t) \mu w_{j} \dx \dt & = \int_{0}^{T} \int_{\Omega} \delta(t) \left ( A \Psi'(\varphi) w_{j} + B \nabla \varphi \cdot \nabla w_{j} - \chi \sigma w_{j} \right ) \dx  \dt,  \\
\int_{0}^{T} \delta(t) \inner{\pd_{t}\sigma}{w_{j}}_{W^{1,5}, (W^{1,5})^{*}} \dt & = \int_{0}^{T} \int_{\Omega} \delta(t) \left ( - n(\varphi) (D \nabla \sigma - \chi \nabla \varphi) \cdot \nabla w_{j} - \mathcal{S} w_{j} \right ) \dx \dt \\
\notag & + \int_{0}^{T} \delta(t) \left ( \int_{\Omega} \sigma \bm{v} \cdot \nabla w_{j} \dx + \int_{\Gamma} b (\sigma_{\infty} - \sigma) w_{j} \dH \right )\dt.
\end{aligned}
\end{equation*}
Since the above, \eqref{limit:pressure} and \eqref{limit:velocity} hold for all $\delta \in C^{\infty}_{c}(0,T)$, we infer that $\{\varphi, \mu, \sigma, p, \bm{v}\}$ satisfies \eqref{CHDN:weak} with $\zeta = \phi = w_{j}$ for a.e. $t \in (0,T)$ and for all $j \geq 1$.  As $\{ w_{j}\}_{j \in \N}$ is a basis for $H^{2}_{N}$, and $H^{2}_{N}$ is dense in both $H^{1}$ and $W^{1,5}$ (see Section \ref{sec:Galerkin}), we see that $\{ \varphi, \mu, \sigma, p, \bm{v}\}$ satisfy \eqref{CHDN:varphi}, \eqref{CHDN:mu}, \eqref{CHDN:pressure} for all $\zeta \in H^{1}$, \eqref{CHDN:sigma} for all $\phi \in W^{1,5}$, and \eqref{CHDN:velo} for all $\bm{\zeta} \in \bm{L}^{2}$.

\paragraph{Attainment of initial conditions.}
It remains to show that $\varphi$ and $\sigma$ attain their corresponding initial conditions.  Strong convergence of $\varphi_{k}$ to $\varphi$ in $C^{0}([0,T];L^{2})$, and the fact that $\varphi_{k}(0) \to \varphi_{0}$ in $L^{2}$ imply that $\varphi(0) = \varphi_{0}$.  Meanwhile, as the limit function $\sigma$ belongs to the function space $C^{0}([0,T]; (H^{1})^{*})$, we see that $\sigma(0):= \sigma(\cdot,0)$ makes sense as an element of $(H^{1})^{*}$.  Let $\zeta \in H^{1}$ be arbitrary, then by the strong convergence $\sigma_{k} \to \sigma$ in $C^{0}([0,T];(H^{1})^{*})$ we see that
\begin{align*}
\inner{\sigma_{k}(0)}{\zeta}_{H^{1},(H^{1})^{*}} \to \inner{\sigma(0)}{\zeta}_{H^{1},(H^{1})^{*}} \text{ as } k \to \infty.
\end{align*}
On the other hand, by \eqref{initialcond:bdd}, we have $\sigma_{k}(0) \to \sigma_{0}$ in $L^{2}$.  This yields
\begin{align*}
\inner{\sigma_{0}}{\zeta}_{H^{1},(H^{1})^{*}} = \lim_{k \to \infty} \inner{\sigma_{k}(0)}{\zeta}_{H^{1},(H^{1})^{*}} = \inner{\sigma(0)}{\zeta}_{H^{1},(H^{1})^{*}}.
\end{align*}

\paragraph{Energy inequality.}
For the energy inequality \eqref{weaksoln:energy:ineq} we employ the weak/weak-* lower semicontinuity of the norms and dual norms to \eqref{unified:energyineq}, \eqref{Galerkin:L2H3}, \eqref{pressure:L8/5H1bdd}, \eqref{pressure:L8/7H2bdd}, \eqref{velocityL8/7H1bdd}, \eqref{divvarphikvk}, \eqref{Galerkin:pdtvarphi}, \eqref{divsigmakvk}, and \eqref{Galerkin:pdtsigma}.

\section{Asymptotic limits}\label{sec:asymptoticlimit}

\subsection{Limit of vanishing permeability}
For $K, b \in (0,1]$ let $(\varphi^{K}, \mu^{K}, \sigma^{K}, \bm{v}^{K}, p^{K})$ denote a weak solution to \eqref{CHDN}-\eqref{CHDNbdy} with $\Gamma_{\bm{v}} = 0$, obtain from Theorem \ref{thm:exist}.  By \eqref{weaksoln:energy:ineq} we deduce that, for a relabelled subsequence as $b \to 0$ and $K \to 0$, the following weak/weak-* convergences:
\begin{alignat*}{3}
\varphi^{K} & \rightarrow \varphi && \quad \text{ weakly-}* && \quad \text{ in } L^{\infty}(0,T;H^{1}) \cap L^{2}(0,T;H^{3}) \cap W^{1,\frac{8}{5}}(0,T;(H^{1})^{*}), \\
\sigma^{K} & \rightarrow \sigma && \quad \text{ weakly-}* && \quad \text{ in } L^{2}(0,T;H^{1}) \cap L^{\infty}(0,T;L^{2}) \cap W^{1,\frac{5}{4}}(0,T;(W^{1,5})^{*}), \\
\mu^{K} & \rightarrow \mu && \quad \text{ weakly } && \quad \text{ in } L^{2}(0,T;H^{1}), \\
p^{K} & \rightarrow p && \quad \text{ weakly } && \quad \text{ in } L^{\frac{8}{5}}(0,T;H^{1}) \cap L^{\frac{8}{7}}(0,T;H^{2}),
\end{alignat*}
and the following strong convergences:
\begin{alignat*}{3}
\varphi^{K} & \rightarrow \varphi && \quad \text{ strongly } && \quad \text{ in } C^{0}([0,T];L^{r}) \cap L^{2}(0,T;W^{2,r}) \text{ and a.e. in } Q, \\
\sigma^{K} & \rightarrow \sigma && \quad \text{ strongly } && \quad \text{ in } C^{0}([0,T]; (H^{1})^{*}) \cap L^{2}(0,T;L^{r}) \text{ and a.e. in } Q, \\
\bm{v}^{K} & \rightarrow \bm{0} && \quad \text{ strongly } && \quad \text{ in } L^{2}(0,T;\bm{L}^{2}) \cap L^{\frac{8}{7}}(0,T;\bm{H}^{1}),  \\
\div(\varphi^{K} \bm{v}^{K}) & \rightarrow 0 && \quad \text{ strongly } && \quad \text{ in } L^{\frac{8}{5}}(0,T;(H^{1})^{*}), \\
\div(\sigma^{K} \bm{v}^{K})& \rightarrow 0 && \quad \text{ strongly } && \quad \text{ in } L^{\frac{5}{4}}(0,T;(W^{1,5})^{*}),
\end{alignat*}
for any $r \in [1,6)$.  The strong convergence of the velocity and the convection terms to zero follows from \eqref{weaksoln:energy:ineq}.  Upon multiplying \eqref{CHDN:weak} by $\delta \in C^{\infty}_{c}(0,T)$ and passing to the limit $b, K \to 0$, we obtain that the limit functions $(\varphi, \mu, \sigma, p)$ satisfy
\begin{subequations}
\begin{align}
\inner{\pd_{t}\varphi}{\zeta}_{H^{1}, (H^{1})^{*}} & = \int_{\Omega} -m(\varphi) \nabla \mu \cdot \nabla \zeta + \Gamma_{\varphi}(\varphi, \mu, \sigma) \zeta \dx, \label{Kzero:varphi}  \\
\int_{\Omega} \mu \zeta \dx & = \int_{\Omega} A \Psi'(\varphi) \zeta + B \nabla \varphi \cdot \nabla \zeta - \chi \sigma \zeta \dx,  \\
\inner{\pd_{t}\sigma}{\phi}_{W^{1,5}, (W^{1,5})^{*}} & = \int_{\Omega} -n(\varphi) (D \nabla \sigma - \chi \nabla \varphi) \cdot \nabla \phi - \mathcal{S}(\varphi, \mu, \sigma) \phi \dx \label{Kzero:sigma}  \\
\int_{\Omega}  \nabla p \cdot \nabla \zeta \dx &= \int_{\Omega} (\mu + \chi \sigma) \nabla \varphi \cdot \nabla \zeta \dx,
\end{align}
\end{subequations}
for all $\zeta \in H^{1}$ and $\phi \in W^{1,5}$ and a.e. $t \in (0,T)$.

Note that substituting any $\zeta \in L^{2}(0,T;H^{1})$ into \eqref{Kzero:varphi}, integrating in time from $0$ to $T$, using H\"{o}lder's inequality and the linear growth condition for $\Gamma_{\varphi}$ leads to the deduction that $\pd_{t} \varphi \in L^{2}(0,T;(H^{1})^{*})$.  To show that $\pd_{t} \sigma \in L^{2}(0,T;(H^{1})^{*})$ we argue as follows.  For any $\xi \in L^{2}(0,T;H^{1})$, we can define
\begin{align*}
F(\xi) := \int_{0}^{T} \int_{\Omega} -n(\varphi) (D \nabla \sigma - \chi \nabla \varphi) \cdot \nabla \xi - \mathcal{S}(\varphi, \mu, \sigma) \xi \dx \dt.
\end{align*}
By H\"{o}lder's inequality and the growth condition on $\mathcal{S}$, we see that $F \in L^{2}(0,T;(H^{1})^{*})$.  It is known that the set of functions that are finite linear combinations of $C^{1}_{c}(0,T) \cdot H^{2}_{N}(\Omega) := \{ \delta(t) \phi(x) : \delta \in C^{1}_{c}(0,T), \; \phi \in H^{2}_{N}(\Omega) \}$ is dense in $C^{1}_{c}(0,T;H^{1})$ (see for instance \cite[p. 384]{book:RenardyRogers}, and in fact this is what we use in Section \ref{sec:passlimit}).  Let $\zeta \in C^{1}_{c}(0,T;H^{1})$ and let $\{\zeta^{n}\}_{n \in \N}$ denote a sequence of functions of the above form such that $\zeta^{n} \to \zeta$ in $C^{1}_{c}(0,T;H^{1})$ as $n \to \infty$.  Then, substituting $\phi = \zeta^{n}$ in \eqref{Kzero:sigma}, integrating over $t$ from $0$ to $T$, and passing to the limit $n \to \infty$ yields
\begin{align*}
\lim_{n \to \infty} \int_{0}^{T} \inner{\pd_{t} \sigma}{\zeta^{n}}_{W^{1,5}, (W^{1,5})^{*}} \dt = \lim_{n \to \infty} F(\zeta^{n}) = F(\zeta).
\end{align*}
Moreover, by the definition of the weak time derivative, we have
\begin{align*}
\int_{0}^{T} \inner{\pd_{t} \sigma}{\zeta^{n}}_{W^{1,5}, (W^{1,5})^{*}} \dt = - \int_{0}^{T} \int_{\Omega} \sigma \pd_{t} \zeta^{n} \dx \dt \to -\int_{0}^{T} \int_{\Omega} \sigma \pd_{t} \zeta \dx \dt \text{ as } n \to \infty.
\end{align*}
Hence, we obtain
\begin{align*}
-\int_{0}^{T} \int_{\Omega} \sigma \pd_{t} \zeta \dx \dt = F(\zeta) = \int_{0}^{T} \int_{\Omega} - n(\varphi) (D \nabla \sigma - \chi \nabla \varphi) \cdot \nabla \zeta - \mathcal{S}(\varphi, \mu, \sigma) \zeta \dx \dt
\end{align*}
for all $\zeta \in C^{1}_{c}(0,T;H^{1})$.  This implies that the weak time derivative $\pd_{t} \sigma$ satisfies
\begin{align*}
\int_{0}^{T} \inner{\pd_{t} \sigma}{\zeta}_{H^{1}, (H^{1})^{*}} \dt = F(\zeta) \quad \forall \zeta \in C^{1}_{c}(0,T;H^{1}),
\end{align*}
and as $F$ belongs to $L^{2}(0,T;(H^{1})^{*})$, we find that $\pd_{t} \sigma$ also belongs to $L^{2}(0,T;(H^{1})^{*})$.  Furthermore, due to the improved regularity $\pd_{t} \sigma \in L^{2}(0,T;(H^{1})^{*})$, we use the continuous embedding
\begin{align*}
L^{2}(0,T;H^{1}) \cap H^{1}(0,T;(H^{1})^{*}) \subset C^{0}([0,T];L^{2})
\end{align*}
to deduce that $\sigma(0) = \sigma_{0}$.

\subsection{Limit of vanishing chemotaxis}
For $\chi, b \in (0,1]$, let $(\varphi^{\chi}, \mu^{\chi}, \sigma^{\chi}, \bm{v}^{\chi}, p^{\chi})$ denote a weak solution to (\ref{CHDN})-(\ref{CHDNbdy}) obtain from Theorem \ref{thm:exist}.  By \eqref{weaksoln:energy:ineq} we deduce that, for a relabelled subsequence as $b \to 0$ and $\chi \to 0$, the following weak/weak-* convergences:
\begin{alignat*}{3}
\varphi^{\chi} & \rightarrow \varphi && \, \text{ weakly-}* && \, \text{ in } L^{\infty}(0,T;H^{1}) \cap L^{2}(0,T;H^{3}) \cap W^{1,\frac{8}{5}}(0,T;(H^{1})^{*}), \\
\sigma^{\chi} & \rightarrow \sigma && \, \text{ weakly-}* && \, \text{ in } L^{2}(0,T;H^{1}) \cap L^{\infty}(0,T;L^{2}) \cap W^{1,\frac{5}{4}}(0,T;(W^{1,5})^{*}), \\
\mu^{\chi} & \rightarrow \mu && \, \text{ weakly } && \, \text{ in } L^{2}(0,T;H^{1}), \\
p^{\chi} & \rightarrow p && \, \text{ weakly } && \, \text{ in } L^{\frac{8}{5}}(0,T;H^{1}) \cap L^{\frac{8}{7}}(0,T;H^{2}), \\
\bm{v}^{\chi} & \rightarrow \bm{v} && \, \text{ weakly } && \, \text{ in } L^{2}(0,T;\bm{L}^{2}) \cap L^{\frac{8}{7}}(0,T;\bm{H}^{1}), \\
\div(\varphi^{\chi} \bm{v}^{\chi}) & \rightarrow \div( \varphi \bm{v}) && \, \text{ weakly } && \, \text{ in } L^{\frac{8}{5}}(0,T;(H^{1})^{*}), \\
\div(\sigma^{\chi} \bm{v}^{\chi})& \rightarrow \div (\sigma \bm{v}) && \, \text{ weakly } && \, \text{ in } L^{\frac{5}{4}}(0,T;(W^{1,5})^{*}),
\end{alignat*}
and the following strong convergences:
\begin{alignat*}{3}
\varphi^{\chi} & \rightarrow \varphi && \quad \text{ strongly } && \quad \text{ in } C^{0}([0,T];L^{r}) \cap L^{2}(0,T;W^{2,r}) \text{ and a.e. in } Q, \\
\sigma^{\chi} & \rightarrow \sigma && \quad \text{ strongly } && \quad \text{ in } C^{0}([0,T];(H^{1})^{*}) \cap L^{2}(0,T;L^{r}) \text{ and a.e. in } Q,
\end{alignat*}
for any $r \in [1,6)$.  For any $\delta \in C^{\infty}_{c}(0,T)$ and $\zeta \in H^{1}$, we have
\begin{align*}
\abs{\int_{0}^{T} \int_{\Omega} \delta \chi \sigma^{\chi} \zeta  \dx \dt} & \leq \chi \norm{\sigma^{\chi}}_{L^{2}(L^{2})} \norm{\zeta}_{L^{2}} \norm{\delta}_{L^{2}(0,T)} \to 0, \\
\abs{\int_{0}^{T} \int_{\Omega} \delta n(\varphi^{\chi}) \chi \nabla \varphi^{\chi} \cdot \nabla \zeta \dx \dt} & \leq n_{1} \chi \norm{\nabla \varphi^{\chi}}_{L^{2}(\bm{L}^{2})} \norm{\nabla \zeta}_{\bm{L}^{2}} \norm{\delta}_{L^{2}(0,T)} \to 0, \\
\abs{\int_{0}^{T} \int_{\Omega} \delta \chi \sigma^{\chi} \nabla \varphi^{\chi} \cdot \nabla \zeta \dx \dt} & \leq \chi \norm{\nabla \zeta}_{\bm{L}^{2}} \norm{\sigma^{\chi}}_{L^{2}(L^{4})} \norm{\nabla \varphi^{\chi}}_{L^{2}(\bm{L}^{4})} \norm{\delta}_{L^{\infty}(0,T)} \to 0,
\end{align*}
as $\chi \to 0$.  Thus, multiplying \eqref{CHDN:weak} with $\delta \in C^{\infty}_{c}(0,T)$, and then passing to the limit $b, \chi \to 0$, we see that $(\varphi, \mu, \sigma, \bm{v}, p)$ satisfies
\begin{subequations}
\begin{align}
\inner{\pd_{t}\varphi}{\zeta}_{H^{1}, (H^{1})^{*}} & = \int_{\Omega} -m(\varphi) \nabla \mu \cdot \nabla \zeta + \Gamma_{\varphi}(\varphi, \mu, \sigma) \zeta + \varphi \bm{v} \cdot \nabla \zeta \dx,  \\
\int_{\Omega} \mu \zeta \dx & = \int_{\Omega} A \Psi'(\varphi) \zeta + B \nabla \varphi \cdot \nabla \zeta \dx,  \\
\inner{\pd_{t}\sigma}{\phi}_{W^{1,5}, (W^{1,5})^{*}} & = \int_{\Omega} -n(\varphi) D \nabla \sigma \cdot \nabla \phi - \mathcal{S}(\varphi, \mu, \sigma) \phi + \sigma \bm{v} \cdot \nabla \phi \dx , \\
\int_{\Omega}  \nabla p \cdot \nabla \zeta \dx &= \int_{\Omega} \frac{1}{K} \Gamma_{\bm{v}} \zeta +  \mu \nabla \varphi \cdot \nabla \zeta \dx,  \\
\int_{\Omega} \bm{v} \cdot \bm{\zeta} \dx & = \int_{\Omega} - K (\nabla p - \mu \nabla \varphi) \cdot \bm{\zeta} \dx,
\end{align}
\end{subequations}
for all $\zeta \in H^{1}$, $\phi \in W^{1,5}$, $\bm{\zeta} \in \bm{L}^{2}$ and a.e. $t \in (0,T)$.

\section{Existence in two dimensions}\label{sec:2D}
We first derive an analogous result to Lemma \ref{lem:reg:pressurevelocity} for two dimensions.
\begin{lemma}\label{lem:2D:pressure:velo:reg}
Let $\Omega \subset \R^{2}$ be a bounded domain with $C^{3}$-boundary.  Let $\varphi \in L^{\infty}(0,T;H^{1}) \cap L^{2}(0,T;H^{2}_{N} \cap H^{3})$, $\sigma \in L^{2}(0,T;H^{1})$, $\mu \in L^{2}(0,T;H^{1})$, the source term $\Gamma_{\bm{v}} \in L^{2}(0,T;L^{2}_{0})$, and the function $p$ satisfying \eqref{pressure:sys}.  Then,
\begin{align*}
p \in L^{k}(0,T;H^{1}) \cap L^{q}(0,T;H^{2}), \quad \bm{v} \in L^{q}(0,T;\bm{H}^{1}),
\end{align*}
for any
\begin{align*}
1 \leq k < 2, \quad 1 \leq q < \frac{4}{3}.
\end{align*}

\end{lemma}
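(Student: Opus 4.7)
The plan is to mirror the three-step structure of the proof of Lemma \ref{lem:reg:pressurevelocity}, with the 3D embedding $H^{1} \subset L^{6}$ and the 3D Gagliardo--Nirenberg inequalities replaced by their 2D analogues. The essential gains in 2D are the embeddings $H^{1} \subset L^{r}$ for every finite $r$ and $H^{2} \subset L^{\infty}$, and in particular the sharper Gagliardo--Nirenberg bound
\begin{align*}
\norm{\nabla \varphi}_{\bm{L}^{\infty}} \leq C \norm{\varphi}_{H^{3}}^{1/2} \norm{\varphi}_{H^{1}}^{1/2},
\end{align*}
which follows from \eqref{GagNirenIneq} applied to $\nabla \varphi$ with $j=0$, $m=2$, $d=2$, $p=\infty$, $q=r=2$, and compares favourably with the 3D exponent $3/4$ on $\norm{\varphi}_{H^{3}}$ used in \eqref{GN:D2fL3andnablafLinfty}.

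For the $L^{k}(H^{1})$ bound on the pressure, I would test the elliptic problem \eqref{pressure:sys} with $p$ and apply the Poincar\'{e} inequality to obtain $\norm{\nabla p}_{\bm{L}^{2}} \leq \norm{(\mu + \chi \sigma) \nabla \varphi}_{\bm{L}^{2}} + C_{p} K^{-1} \norm{\Gamma_{\bm{v}}}_{L^{2}}$. For any $r_{2} \in (2, \infty)$ with conjugate $r_{1} = 2 r_{2}/(r_{2} - 2) \in (2, \infty)$, H\"{o}lder's inequality combined with $H^{1} \subset L^{r_{1}}$ and the 2D Gagliardo--Nirenberg bound $\norm{\nabla \varphi}_{\bm{L}^{r_{2}}} \leq C \norm{\varphi}_{H^{3}}^{1/2 - 1/r_{2}} \norm{\varphi}_{H^{1}}^{1/2 + 1/r_{2}}$ yield
\begin{align*}
\norm{(\mu + \chi \sigma) \nabla \varphi}_{\bm{L}^{2}} \leq C \norm{\mu + \chi \sigma}_{H^{1}} \norm{\varphi}_{H^{3}}^{1/2 - 1/r_{2}} \norm{\varphi}_{H^{1}}^{1/2 + 1/r_{2}}.
\end{align*}
Raising to the $k$-th power, pulling out the $L^{\infty}(H^{1})$ norm of $\varphi$, and integrating in time via H\"{o}lder with exponents $(2/k,\,2/(2-k))$ is admissible precisely when $k(3/2 - 1/r_{2}) \leq 2$, equivalently $k \leq 4 r_{2}/(3 r_{2} - 2)$, and letting $r_{2} \to 2^{+}$ fills the whole range $k < 2$.

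Next, for $p \in L^{q}(H^{2})$ I would invoke the Neumann elliptic regularity
\begin{align*}
\norm{p}_{H^{2}} \leq C \bigl( \norm{p}_{H^{1}} + \norm{\div((\mu + \chi \sigma) \nabla \varphi)}_{L^{2}} + K^{-1} \norm{\Gamma_{\bm{v}}}_{L^{2}} \bigr),
\end{align*}
and expand $\div((\mu + \chi \sigma) \nabla \varphi) = \nabla(\mu + \chi \sigma) \cdot \nabla \varphi + (\mu + \chi \sigma) \Laplace \varphi$. The 2D bound on $\norm{\nabla \varphi}_{\bm{L}^{\infty}}$ yields $\norm{\nabla(\mu + \chi \sigma) \cdot \nabla \varphi}_{L^{2}} \leq C \norm{\mu + \chi \sigma}_{H^{1}} \norm{\varphi}_{H^{3}}^{1/2} \norm{\varphi}_{H^{1}}^{1/2}$, and a time H\"{o}lder inequality with exponents $(2/q, 4/q)$ summing to one is admissible precisely when $q \leq 4/3$. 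For the second summand, choosing $r_{2} \in (2, \infty)$ with conjugate $r_{1}$ as before, the embedding $H^{1} \subset L^{r_{1}}$ together with $\norm{\Laplace \varphi}_{L^{r_{2}}} \leq C \norm{\varphi}_{H^{3}}^{1 - 1/r_{2}} \norm{\varphi}_{H^{1}}^{1/r_{2}}$ (again applying \eqref{GagNirenIneq} to $\nabla \varphi$) and a parallel time H\"{o}lder computation deliver $q \leq 2 r_{2}/(2 r_{2} - 1) \to 4/3$ as $r_{2} \to 2^{+}$. Since the previous step already provides $\norm{p}_{L^{q}(H^{1})}$ for any $q < 2$, this closes the $L^{q}(H^{2})$ estimate for $q < 4/3$. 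The regularity $\bm{v} \in L^{q}(\bm{H}^{1})$ then follows by differentiating $\bm{v} = -K(\nabla p - (\mu + \chi \sigma) \nabla \varphi)$ componentwise and controlling $\norm{\der \bm{v}}_{\bm{L}^{2}}$ by $K(\norm{p}_{H^{2}} + \norm{\div((\mu + \chi \sigma) \nabla \varphi)}_{L^{2}})$ up to a symmetrisation, reducing to the ingredients already in hand.

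The main obstacle is arranging for the two summands of $\div((\mu + \chi \sigma) \nabla \varphi)$ to share the same limiting time-exponent $q = 4/3$; both do, precisely because of the sharper 2D power $1/2$ on $\norm{\varphi}_{H^{3}}$ in the $\bm{L}^{\infty}$ bound for $\nabla \varphi$, which is the structural improvement responsible for upgrading the 3D result $q = 8/7$ to $q < 4/3$ in 2D.
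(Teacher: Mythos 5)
Your proposal is correct and follows essentially the same route as the paper's proof of the lemma: split $\div((\mu + \chi\sigma)\nabla\varphi)$ into its two summands, use the product H\"older pairing $L^{r_1}\cdot L^{r_2}$ in space combined with the 2D Gagliardo--Nirenberg bounds for $\nabla\varphi$ and $D^{2}\varphi$, and then balance the time-H\"older exponents against $\mu,\sigma \in L^{2}(H^{1})$, $\varphi \in L^{2}(H^{3})\cap L^{\infty}(H^{1})$. The paper parametrises by $s \in [1,\infty)$ with $r_{2} = 2s/(s-1)$, whereas you parametrise directly by $r_{2}$; your admissible ranges $k \leq 4r_{2}/(3r_{2}-2) \to 2$ and $q \leq 2r_{2}/(2r_{2}-1) \to 4/3$ as $r_{2}\to 2^{+}$ coincide with the paper's $w = 4s/(1+2s)$ and $w = 4s/(1+3s)$ as $s \to \infty$. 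The only cosmetic difference is in the last sentence: for the velocity bound the paper does not invoke any ``symmetrisation'' of $\div((\mu+\chi\sigma)\nabla\varphi)$ but simply differentiates $\bm{v}$ componentwise (as in \eqref{Galerkin:velo:H1}) and estimates $D_{i}(\mu+\chi\sigma)D_{j}\varphi$ and $(\mu+\chi\sigma)D_{i}D_{j}\varphi$ by exactly the same Gagliardo--Nirenberg ingredients you have already set up, so your ``reducing to the ingredients already in hand'' is the correct reading even if ``up to a symmetrisation'' is a slightly loose description.
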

\begin{proof}
We estimate \eqref{Galerkin:pressure:L2grad} differently than in the proof of Lemma \ref{lem:reg:pressurevelocity}.  By H\"{o}lder's inequality for any $1 \leq s < \infty$ we have
\begin{align*}
\norm{(\mu + \chi \sigma) \nabla \varphi}_{\bm{L}^{2}} \leq \norm{\mu + \chi \sigma}_{L^{2s}} \norm{\nabla \varphi}_{\bm{L}^{\frac{2 s}{s-1}}}.
\end{align*}
Then, by the Gagliardo--Nirenberg inequality \eqref{GagNirenIneq} with $p = \frac{2s}{s-1}$, $j = 0$, $r = 2$, $m = 2$, $d = 2$, $q = 2$, and $\alpha = \frac{1}{2} - \frac{s-1}{2s} = \frac{1}{2s}$, we find that
\begin{align*}
\norm{\nabla \varphi}_{\bm{L}^{\frac{2s}{s-1}}} \leq C \norm{\nabla \varphi}_{\bm{H}^{2}}^{\frac{1}{2s}} \norm{\nabla \varphi}_{\bm{L}^{2}}^{1 - \frac{1}{2s}} \leq C \norm{\varphi}_{H^{3}}^{\frac{1}{2s}} \norm{\varphi}_{H^{1}}^{1 - \frac{1}{2s}}.
\end{align*}
Then, by H\"{o}lder's inequality and the Sobolev embedding $H^{1} \subset L^{r}$ for $1 \leq r < \infty$ in two dimensions, we have for $w, y \geq 1$,
\begin{align*}
& \int_{0}^{T} \norm{(\mu + \chi \sigma) \nabla \varphi}_{\bm{L}^{2}}^{w} \dt \leq C \int_{0}^{T} \norm{\mu + \chi \sigma}_{L^{2s}}^{w} \norm{\nabla \varphi}_{\bm{L}^{\frac{2s}{s-1}}}^{w} \dt \\
& \quad  \leq C \norm{\varphi}_{L^{\infty}(H^{1})}^{w \frac{2s-1}{2s}}  \norm{\mu + \chi \sigma}_{L^{wy}(H^{1})}^{w}  \norm{\varphi}_{L^{\frac{w}{2s} \frac{y}{y-1}}(H^{3})}^{\frac{w}{2s}}.
\end{align*}
As $\mu, \sigma$ belong to $L^{2}(0,T;H^{1})$ and $\varphi$ belongs to $L^{2}(0,T;H^{3}) \cap L^{\infty}(0,T;H^{1})$, we need
\begin{align*}
wy = 2, \quad \frac{w}{2s}\frac{y}{y-1} = 2 \Longrightarrow y = \frac{2s +1}{2s}, \quad w = \frac{4s}{1+2s}.
\end{align*}
Since $w = \frac{4s}{1+2s} < 2$ for all $s \in [1,\infty)$, and $\Gamma_{\bm{v}} \in L^{2}(0,T;L^{2}_{0})$, the computations in the proof of Lemma \ref{lem:reg:pressurevelocity} yields that
\begin{align*}
p \in L^{k}(0,T;H^{1}) \quad \text{ for } 1 \leq k < 2
\end{align*}
Next, we see that
\begin{align*}
\norm{\div ((\mu + \chi \sigma) \nabla \varphi)}_{L^{2}} & \leq \norm{(\mu + \chi \sigma) \Laplace \varphi}_{L^{2}} + \norm{\nabla (\mu + \chi \sigma) \cdot \nabla \varphi}_{L^{2}} \\
& \leq \norm{\mu + \chi \sigma}_{L^{2s}} \norm{\Laplace \varphi}_{L^{\frac{2s}{s-1}}} + \norm{\nabla (\mu + \chi \sigma)}_{\bm{L}^{2}} \norm{\nabla \varphi}_{\bm{L}^{\infty}} .
\end{align*}
By the Gagliardo--Nirenberg inequality \eqref{GagNirenIneq} with $p = \infty$, $j = 0$, $r = 2$, $m = 2$, $d = 2$, $q = 2$ and $\alpha = \frac{1}{2}$, we have
\begin{align}\label{2D:nablavarphi:Linfty}
\norm{\nabla \varphi}_{\bm{L}^{\infty}} \leq C\norm{\nabla \varphi}_{\bm{H}^{2}}^{\frac{1}{2}} \norm{\nabla \varphi}_{\bm{L}^{2}}^{\frac{1}{2}} \leq C \norm{\varphi}_{H^{3}}^{\frac{1}{2}} \norm{\varphi}_{H^{1}}^{\frac{1}{2}},
\end{align}
and with $p = \frac{2s}{s-1}$, $j = 1$, $r = 2$, $m = 2$, $d = 2$, $q = 2$ and $\alpha = \frac{s+1}{2s} \in (\frac{1}{2},1]$ for $s \in [1,\infty)$, we have
\begin{align}\label{2D:Laplacevarphi:L2s/s-1}
\norm{\Laplace \varphi}_{L^{\frac{2s}{s-1}}} \leq C \norm{\nabla \varphi}_{\bm{H}^{2}}^{\frac{s+1}{2s}} \norm{\nabla \varphi}_{\bm{L}^{2}}^{\frac{s-1}{2s}} \leq C \norm{\varphi}_{H^{3}}^{\frac{s+1}{2s}} \norm{\varphi}_{H^{1}}^{\frac{s-1}{2s}}.
\end{align}
Hence, for $w, y, z \geq 1$, we find that
\begin{align*}
\int_{0}^{T} \norm{\div ((\mu + \chi \sigma) \nabla \varphi)}_{L^{2}}^{w} \dt & \leq C \norm{\varphi}_{L^{\infty}(H^{1})}^{\frac{w}{2}} \norm{\mu + \chi \sigma}_{L^{wz}(H^{1})}^{w}  \norm{\varphi}_{L^{\frac{w}{2} \frac{z}{z-1}}(H^{3})}^{\frac{w}{2}} \\
& + C \norm{\varphi}_{L^{\infty}(H^{1})}^{w \frac{s-1}{2s}} \norm{\mu + \chi \sigma}_{L^{wy}(H^{1})}^{w} \norm{\varphi}_{L^{w \frac{s+1}{2s} \frac{y}{y-1}}(H^{3})}^{w \frac{s+1}{2s}}.
\end{align*}
Since $\frac{s+1}{2s} \leq 1$ for all $s \in [1,\infty)$, we require
\begin{align*}
wy = 2, \quad \frac{wy (s+1)}{2s(y-1)} = 2 \Longrightarrow y = \frac{3s+1}{2s}, \quad w = \frac{4s}{1+3s}.
\end{align*}
We choose $z = \frac{3s+1}{2s} \in (\frac{3}{2},2]$ so that
\begin{align*}
wz = 2, \quad \frac{w}{2} \frac{z}{z-1} = \frac{2s}{1 + 3s} \frac{3s+1}{s+1} = \frac{2s}{s+1} \in [1,2),
\end{align*}
and thus we obtain
\begin{align*}
\int_{0}^{T} \norm{\div ((\mu + \chi \sigma) \nabla \varphi)}_{L^{2}}^{\frac{4s}{1+3s}} \dt & \leq C \norm{\varphi}_{L^{\infty}(H^{1})}^{\frac{2s}{1 + 3s}} \norm{\mu + \chi \sigma}_{L^{2}(H^{1})}^{\frac{4s}{1+3s}}  \norm{\varphi}_{L^{\frac{2s}{s+1}}(H^{3})}^{\frac{2s}{1+3s}} \\
& + C \norm{\varphi}_{L^{\infty}(H^{1})}^{\frac{2s-2}{1 + 3s}} \norm{\mu + \chi \sigma}_{L^{2}(H^{1})}^{\frac{4s}{1+3s}} \norm{\varphi}_{L^{2}(H^{3})}^{\frac{2s+2}{1 +3s}}.
\end{align*}
From \eqref{Galerkin:pressure:H2} and using the fact that $\frac{4s}{1+3s} < \frac{4s}{1+2s}$ for all $s \in [1,\infty)$, we see that
\begin{align*}
p \in L^{q}(0,T;H^{2}) \quad \text{ for } 1 \leq q < \frac{4}{3}.
\end{align*}
Similarly, from \eqref{Galerkin:velo:H1}, \eqref{2D:nablavarphi:Linfty} and \eqref{2D:Laplacevarphi:L2s/s-1}, we obtain for fixed $1 \leq i, j \leq 2$, and any $s \in [1,\infty)$,
\begin{equation}\label{2D:velo:H1:estimate}
\begin{aligned}
\norm{D_{i} v_{j}}_{L^{2}} & = K \norm{D_{i} D_{j} p - (D_{i}(\mu + \chi \sigma) D_{j} \varphi - (\mu + \chi \sigma) D_{i} D_{j} \varphi}_{L^{2}} \\
& \leq K \left ( \norm{p}_{H^{2}} + \norm{\nabla (\mu + \chi \sigma)}_{\bm{L}^{2}} \norm{\nabla \varphi}_{\bm{L}^{\infty}} + \norm{\mu + \chi \sigma}_{L^{2s}} \norm{D^{2}\varphi}_{L^{\frac{2s}{s-1}}} \right ) \\
& \leq K \left ( \norm{p}_{H^{2}} + C \norm{\mu + \chi \sigma}_{H^{1}} \left ( \norm{\varphi}_{H^{3}}^{\frac{1}{2}} \norm{\varphi}_{H^{1}}^{\frac{1}{2}} + \norm{\varphi}_{H^{3}}^{\frac{s+1}{2s}} \norm{\varphi}_{H^{1}}^{\frac{s-1}{2s}} \right ) \right ).
\end{aligned}
\end{equation}
Then, a similar calculation shows that the right-hand side is bounded in $L^{\frac{4s}{1+3s}}(0,T)$, which in turn implies that
\begin{align*}
\bm{v} \in L^{q}(0,T;\bm{H}^{1}) \text{ for } 1 \leq q < \frac{4}{3}.
\end{align*}
\end{proof}
By the above new estimates we can show that $\div(\varphi \bm{v})$ and $\pd_{t} \varphi$ have improved temporal regularity, and that $\div (\sigma \bm{v})$ and $\pd_{t}\sigma$ belong to the dual space $(H^{1})^{*}$.
\begin{lemma}
For dimension $d = 2$, let $(\varphi_{k}, \mu_{k}, \sigma_{k}, p_{k}, \bm{v}_{k})$ denote the Galerkin ansatz from Section \ref{sec:Galerkin} satisfying \eqref{unified:energyineq}.  Then, it holds that for $\frac{4}{3} \leq w < 2$ and $1 < r < \frac{8}{7}$,
\begin{align*}
\norm{\div (\varphi_{k} \bm{v}_{k})}_{L^{w}((H^{1})^{*})} + \norm{\div (\sigma_{k} \bm{v}_{k})}_{L^{r}((H^{1})^{*})} & \leq K^{\frac{1}{2}} \mathcal{E}, \\
\norm{\pd_{t} \varphi_{k}}_{L^{w}((H^{1})^{*})}  + \norm{\pd_{t} \sigma_{k}}_{L^{r}((H^{1})^{*})} & \leq \mathcal{E} \left (1 + K^{\frac{1}{2}} \right ),
\end{align*}
where $\mathcal{E}$ denotes positive constants that are uniformly bounded for $b, \chi \in (0,1]$ and are also uniformly bounded for $K \in (0,1]$ when $\Gamma_{\bm{v}} = 0$.
\end{lemma}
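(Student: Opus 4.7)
The plan is to adapt the three-dimensional computation from Section \ref{sec:convectionandtimederivative}, exploiting the improved velocity regularity $\bm{v}_{k} \in L^{q_{v}}(0,T;\bm{H}^{1})$ available for any $q_{v} \in [1,4/3)$ from Lemma \ref{lem:2D:pressure:velo:reg}, together with the two-dimensional Sobolev embedding $H^{1} \hookrightarrow L^{s}$ for every $s \in [1,\infty)$. For both convection estimates I would use integration by parts (legitimate since $\bm{v}_{k} \cdot \bm{n} = 0$ on $\pd \Omega$), reducing the task to controlling the bilinear form $\int_{\Omega} \psi_{k} \bm{v}_{k} \cdot \nabla (\Pi_{k} \zeta) \dx$ for $\psi_{k} \in \{\varphi_{k}, \sigma_{k}\}$ and $\zeta$ in the appropriate test space, where the $H^{1}$-stability of the projection $\Pi_{k}$ (inherited from the basis being the Neumann--Laplacian eigenbasis) provides $\norm{\Pi_{k} \zeta}_{H^{1}} \leq C \norm{\zeta}_{H^{1}}$.

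For $\psi_{k} = \varphi_{k}$ I would pick $p, q \geq 2$ with $\frac{1}{p} + \frac{1}{q} = \frac{1}{2}$ and apply H\"{o}lder, using the time-uniform bound $\norm{\varphi_{k}}_{L^{p}} \leq C \norm{\varphi_{k}}_{H^{1}}$ from \eqref{unified:energyineq} together with the Gagliardo--Nirenberg inequality $\norm{\bm{v}_{k}}_{\bm{L}^{q}} \leq C \norm{\bm{v}_{k}}_{\bm{H}^{1}}^{1-2/q} \norm{\bm{v}_{k}}_{\bm{L}^{2}}^{2/q}$. A subsequent two-term H\"{o}lder inequality in time, using $\bm{v}_{k} \in L^{q_{v}}(\bm{H}^{1}) \cap L^{2}(\bm{L}^{2})$, yields $L^{w}$-integrability of the product provided
\begin{align*}
w \left( \frac{1 - 2/q}{q_{v}} + \frac{1}{q} \right) \leq 1, \qquad \text{i.e.,} \qquad w \leq \frac{q q_{v}}{q - 2 + q_{v}},
\end{align*}
so letting $q \to 2^{+}$ and $q_{v} \to (4/3)^{-}$ realises every $w \in [4/3, 2)$. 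The $K^{1/2} \mathcal{E}$ factor is inherited from the estimates $\norm{\bm{v}_{k}}_{L^{2}(\bm{L}^{2}) \cap L^{q_{v}}(\bm{H}^{1})} \leq K^{1/2} \mathcal{E}$ coming from \eqref{unified:energyineq} and Lemma \ref{lem:2D:pressure:velo:reg}.

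For $\psi_{k} = \sigma_{k}$ this strategy fails since $\sigma_{k}$ only lies in $L^{\infty}(L^{2})$, so I would instead split symmetrically at $p = q = 4$ and invoke the two-dimensional Gagliardo--Nirenberg bounds $\norm{\sigma_{k}}_{L^{4}} \leq C \norm{\sigma_{k}}_{H^{1}}^{1/2} \norm{\sigma_{k}}_{L^{2}}^{1/2}$ and the analogous one for $\bm{v}_{k}$. Raising to the $r$-th power, absorbing the $L^{\infty}(L^{2})$ factor in $\sigma_{k}$ and applying a three-term H\"{o}lder inequality in time against $\sigma_{k} \in L^{2}(H^{1})$, $\bm{v}_{k} \in L^{q_{v}}(\bm{H}^{1})$ and $\bm{v}_{k} \in L^{2}(\bm{L}^{2})$ requires $r(\frac{1}{4} + \frac{1}{2 q_{v}} + \frac{1}{4}) \leq 1$, that is $r \leq 2 q_{v}/(q_{v} + 1)$, which as $q_{v} \to (4/3)^{-}$ covers every $r \in (1,8/7)$; again the $\bm{v}_{k}$-factors supply the $K^{1/2}$-dependence.

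The time-derivative bounds would then follow by testing \eqref{Galerkin:varphi} and \eqref{Galerkin:sigma} with $\Pi_{k} \zeta$, $\zeta \in H^{1}$: the diffusive terms lie in $L^{2}(0,T;(H^{1})^{*})$ by $m_{1}, n_{1} < \infty$ and \eqref{unified:energyineq}; the source terms $\Gamma_{\varphi,k}$ and $\mathcal{S}_{k}$ lie in $L^{2}(0,T;L^{2}) \subset L^{2}(0,T;(H^{1})^{*})$ by the linear growth \eqref{assump:Sourcetermgrowth}; the boundary contribution $\int_{\pd \Omega} b(\sigma_{\infty} - \sigma_{k}) \zeta \dH$ is controlled by the trace theorem, $\sigma_{\infty} \in L^{2}(L^{2}(\pd \Omega))$ and the boundary estimate in \eqref{unified:energyineq}. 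Since $w < 2$ and $r < 2$ and $[0,T]$ is bounded, the convection terms are the least integrable in time and dictate the claimed regularity. The main obstacle is the delicate simultaneous tuning of $p$, $q$ and $q_{v}$ needed to push the stack of H\"{o}lder and Gagliardo--Nirenberg inequalities right up to the endpoints $w \to 2^{-}$ and $r \to (8/7)^{-}$; the strict inequalities reflect the failure of $H^{1} \hookrightarrow L^{\infty}$ in dimension two and the non-attainment of $q_{v} = 4/3$ in Lemma \ref{lem:2D:pressure:velo:reg}.
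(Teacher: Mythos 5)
Your proposal is correct and lands on the same range of exponents as the paper, but the two arguments split at the $\div(\varphi_{k}\bm{v}_{k})$ estimate. The paper never touches the improved velocity regularity for this term: it takes $\bm{v}_{k}$ only in $L^{2}(0,T;\bm{L}^{2})$ and compensates by placing $\varphi_{k}$ in $L^{\infty}$ via the two-dimensional Gagliardo--Nirenberg interpolation $\norm{\varphi_{k}}_{L^{\infty}} \leq C\norm{\varphi_{k}}_{H^{3}}^{1/(q+1)}\norm{\varphi_{k}}_{H^{1}}^{q/(q+1)}$ between the energy bound $\varphi_{k}\in L^{\infty}(H^{1})$ and the higher-order elliptic bound $\varphi_{k}\in L^{2}(H^{3})$, which gives the test-function constraint $\zeta \in L^{2+2/q}(H^{1})$ and hence $w = (2q+2)/(q+2) \in [4/3,2)$ as $q$ ranges over $[1,\infty)$. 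You instead keep $\varphi_{k}$ in the low-order scale $L^{\infty}(H^{1})\hookrightarrow L^{\infty}(L^{p})$ (valid for every finite $p$ in two dimensions) and pay with the Darcy regularity $\bm{v}_{k}\in L^{q_{v}}(\bm{H}^{1})$ from Lemma~\ref{lem:2D:pressure:velo:reg}, interpolating $\norm{\bm{v}_{k}}_{\bm{L}^{q}} \lesssim \norm{\bm{v}_{k}}_{\bm{H}^{1}}^{1-2/q}\norm{\bm{v}_{k}}_{\bm{L}^{2}}^{2/q}$. Both give $w \to 2^{-}$ at the endpoints ($q\to\infty$ for the paper, $q\to 2^{+}$ and $q_{v}\to (4/3)^{-}$ for you), and neither attains $w=2$ because $H^{1}\not\hookrightarrow L^{\infty}$. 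The paper's route is marginally more economical in that the $\varphi$-equation bound does not depend on the pressure/velocity regularity lemma at all, only on the basic energy estimate plus the $H^{3}$ bound; your route is arguably more uniform, treating $\div(\varphi_{k}\bm{v}_{k})$ and $\div(\sigma_{k}\bm{v}_{k})$ with the same machinery. For $\div(\sigma_{k}\bm{v}_{k})$ your three-term Hölder in time with $p=q=4$ and the $K^{1/2}$-bookkeeping via $K^{-1/2}\norm{\bm{v}_{k}}_{L^{2}(\bm{L}^{2})\cap L^{q_{v}}(\bm{H}^{1})}\leq \mathcal{E}$ matches the paper exactly, and the time-derivative bounds from testing \eqref{Galerkin:varphi}, \eqref{Galerkin:sigma} with $\Pi_{k}\zeta$ and absorbing the (more regular) diffusion, reaction, and boundary terms are standard and identical to the paper's reasoning.
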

\begin{proof}
The assertions for $\pd_{t}\varphi_{k}$ and $\pd_{t}\sigma_{k}$ will follow via similar arguments in Section \ref{sec:convectionandtimederivative} once we establish the assertion for the convection terms.  In dimension $d = 2$, we have the embedding $L^{2}(0,T;H^{1}) \cap L^{\infty}(0,T;L^{2}) \subset L^{4}(Q)$, and by the Gagliardo--Nirenberg inequality \eqref{GagNirenIneq} with $p = 4$, $j = 0$, $r = 2$, $d = 2$, $m = 1$, $q = 2$ and $\alpha = \frac{1}{2}$,
\begin{align*}
\norm{f}_{L^{4}} \leq C \norm{f}_{H^{1}}^{\frac{1}{2}} \norm{f}_{L^{2}}^{\frac{1}{2}}.
\end{align*}
Consider an arbitrary $\zeta \in L^{s}(0,T;H^{1})$ for some $s \geq 1$ yet to be determined.  Then, we compute that
\begin{align*}
\abs{\int_{0}^{T} \int_{\Omega} \sigma_{k} \bm{v}_{k} \cdot \nabla \Pi_{k} \zeta \dx \dt} & \leq \int_{0}^{T} \norm{\sigma_{k}}_{L^{4}} \norm{\bm{v}_{k}}_{\bm{L}^{4}} \norm{\nabla \zeta}_{\bm{L}^{2}} \dt \\
& \leq C \norm{\sigma_{k}}_{L^{4}(Q)} \left ( \int_{0}^{T} \norm{\bm{v}_{k}}_{\bm{H}^{1}}^{\frac{2}{3}} \norm{\bm{v}_{k}}_{\bm{L}^{2}}^{\frac{2}{3}} \norm{\zeta}_{H^{1}}^{\frac{4}{3}} \dt \right )^{\frac{3}{4}} \\
& \leq  C \norm{\sigma_{k}}_{L^{4}(Q)} \norm{\bm{v}_{k}}_{L^{\frac{2}{3} x_{1}}(\bm{H}^{1})}^{\frac{1}{2}} \norm{\bm{v}_{k}}_{L^{\frac{2}{3} x_{2}}(\bm{L}^{2})}^{\frac{1}{2}} \norm{\zeta}_{L^{\frac{4}{3} x_{3}}(H^{1})},
\end{align*}
where $x_{1}, x_{2}, x_{3} \geq 1$ satisfy
\begin{align*}
\frac{1}{x_{1}} + \frac{1}{x_{2}} + \frac{1}{x_{3}} = 1, \quad \frac{2}{3} x_{1} < \frac{4}{3}, \quad \frac{2}{3} x_{2} \leq 2 \Longrightarrow x_{3} > 6.
\end{align*}
Then, from \eqref{unified:energyineq} and \eqref{2D:velo:H1:estimate}, it holds that
\begin{align*}
\abs{\int_{0}^{T} \int_{\Omega} \sigma_{k} \bm{v}_{k} \cdot \nabla \Pi_{k} \zeta \dx \dt} \leq \mathcal{E} K^{\frac{1}{2}} \norm{\zeta}_{L^{s}(H^{1})} \quad \text{ for } s = \frac{4}{3}x_{3} > 8,
\end{align*}
that is, $\{\div (\sigma_{k} \bm{v}_{k})\}_{k \in \N}$ is uniformly bounded in the dual space of $L^{s}(0,T;H^{1})$ for $s > 8$.  Similarly, by the Gagliardo--Nirenberg inequality \eqref{GagNirenIneq} with $p = \infty$, $j = 0$, $r = 2$, $d = 2$, $m = 3$, $q \in [1,\infty)$ and $\alpha = \frac{1}{q+1}$,
\begin{align*}
\norm{\varphi_{k}}_{L^{\infty}} \leq C \norm{\varphi_{k}}_{H^{3}}^{\frac{1}{q+1}} \norm{\varphi_{k}}_{L^{q}}^{\frac{q}{q+1}} \leq C \norm{\varphi_{k}}_{H^{3}}^{\frac{1}{q+1}} \norm{\varphi_{k}}_{H^{1}}^{\frac{q}{q+1}}.
\end{align*}
Proceeding as in \eqref{convectionvarphiv}, we find that for an arbitrary $\zeta \in L^{s}(0,T;H^{1})$, where $s \geq 1$ is yet to be determined,
\begin{align*}
\abs{\int_{0}^{T} \int_{\Omega} \varphi_{k} \bm{v}_{k} \cdot \nabla \Pi_{k} \zeta \dx \dt} & \leq \int_{0}^{T} \norm{\bm{v}_{k}}_{\bm{L}^{2}} \norm{\varphi_{k}}_{L^{\infty}} \norm{\nabla \Pi_{k} \zeta}_{\bm{L}^{2}} \dt \\
& \leq C \norm{\varphi_{k}}_{L^{\infty}(H^{1})}^{\frac{q}{q+1}} \norm{\bm{v}_{k}}_{L^{2}(\bm{L}^{2})} \norm{\varphi_{k}}_{L^{2}(H^{3})}^{\frac{1}{q+1}} \norm{\zeta}_{L^{\frac{2(q+1)}{q}}(H^{1})} \\
& \leq \mathcal{E} K^{\frac{1}{2}} \norm{\zeta}_{L^{\frac{2(q+1)}{q}}(H^{1})},
\end{align*}
and so $\{\div(\varphi_{k} \bm{v}_{k})\}_{k \in \N}$ is uniformly bounded in the dual space of $L^{s}(0,T;H^{1})$ for $s = 2 + \frac{2}{q} \in (2,4]$.
\end{proof}
\begin{remark}\label{rem:2D:W14star}
We point out that in the absence of the regularity result $\bm{v}_{k} \in L^{q}(0,T;\bm{H}^{1})$ from Lemma \ref{lem:2D:pressure:velo:reg}, and if we only have $\bm{v}_{k} \in L^{2}(0,T;\bm{L}^{2})$, then we obtain
\begin{align*}
\abs{\int_{0}^{T} \int_{\Omega} \sigma_{k} \bm{v}_{k} \cdot \nabla \Pi_{k} \zeta \dx \dt} \leq \norm{\sigma_{k}}_{L^{4}(Q)} \norm{\bm{v}_{k}}_{L^{2}(\bm{L}^{2})} \norm{\nabla \zeta}_{L^{4}(\bm{L}^{4})},
\end{align*}
and this implies that both $\{\div(\sigma_{k} \bm{v}_{k})\}_{k \in \N}$ and $\{\pd_{t}\sigma_{k}\}_{k \in \N}$ are bounded uniformly only in $L^{\frac{4}{3}}(0,T;(W^{1,4})^{*})$.
\end{remark}

\section{Discussion}\label{sec:Discussion}

\paragraph{Reformulations of Darcy's law and the pressure.}
Associated to Darcy's law \eqref{CHDN:Darcy} is the term $\lambda_{\bm{v}} := p - \mu \varphi - \frac{D}{2} \abs{\sigma}^{2}$ which will contribute the source term $\Gamma_{\bm{v}} \lambda_{\bm{v}}$ in the energy identity \eqref{apriorienergy:identity}.  In \cite[Rmk. 2.1]{article:GarckeLamSitkaStyles} three other reformulations of Darcy's law \eqref{CHDN:Darcy} and the pressure are considered:
\begin{enumerate}[label=$(\mathrm{R \arabic*})$, ref = $(\mathrm{R \arabic*})$]
\item Let $q := p - A \Psi(\varphi) - \frac{B}{2} \abs{\nabla \varphi}^{2}$ so that
\begin{subequations}
\begin{align}
\lambda_{\bm{v}} & = q + A \Psi(\varphi) + \frac{B}{2} \abs{\nabla \varphi}^{2} - \frac{D}{2} \abs{\sigma}^{2} + \chi \sigma(1-\varphi) - \mu \varphi, \label{Darcy0:lambdav}\\
\bm{v} & = K (\nabla (-q - \tfrac{B}{2} \abs{\nabla \varphi}^{2}) - B \Laplace \varphi \nabla \varphi) = - K (\nabla q + B \div (\nabla \varphi \otimes \nabla \varphi)). \label{Darcy:type0}
\end{align}
\end{subequations}
\item Let $\hat{p} := p + \frac{D}{2} \abs{\sigma}^{2} + \chi \sigma(1-\varphi)$ so that
\begin{subequations}
\begin{align}
\lambda_{\bm{v}} & = \hat{p} - \mu \varphi - D\abs{\sigma}^{2} - \chi \sigma (1-\varphi), \label{Darcy1:lambdav}\\
\bm{v} & = -K (\nabla \hat{p} - \mu \nabla \varphi - (D \sigma + \chi (1-\varphi)) \nabla \sigma).\label{Darcy:type1}
\end{align}
\end{subequations}
\item Let $\tilde{p} := p - \frac{D}{2} \abs{\sigma}^{2} - \mu \varphi$ so that
\begin{subequations}
\begin{align}
\lambda_{\bm{v}} & = \tilde{p}, \label{Darcy2:lambdav} \\
\bm{v} & =  -K (\nabla \tilde{p} + \varphi \nabla \mu + \sigma \nabla (D\sigma + \chi (1-\varphi))).\label{Darcy:type2}
\end{align}
\end{subequations}
\end{enumerate}
From the viewpoint of estimating the source term $\Gamma_{\bm{v}} \lambda_{\bm{v}}$, we see that \eqref{Darcy2:lambdav} has the advantage of being the simplest.  Meanwhile, for \eqref{Darcy1:lambdav} the analysis for $\Gamma_{\bm{v}} \lambda_{\bm{v}}$ is similar to that performed in Section \ref{sec:SourcetermDarcy}, but for \eqref{Darcy0:lambdav} the main difficulty will be to estimate the terms $(A\Psi(\varphi) + \frac{B}{2} \abs{\nabla \varphi}^{2}) \Gamma_{\bm{v}}$ and $(-\frac{D}{2} \abs{\sigma}^{2} + \chi \sigma \varphi) \Gamma_{\bm{v}}$, which at first glance would require the assumption that $\Gamma_{\bm{v}} \in L^{\infty}(Q)$, and obtaining an $L^{2}$-estimate for the pressure $q$ from the Darcy law \eqref{Darcy:type0} would be difficult due to the term $\div (\nabla \varphi \otimes \nabla \varphi)$.
\paragraph{Other boundary conditions for the pressure and velocity.}
In \cite[\S 2.4.4]{article:GarckeLamSitkaStyles} the authors have discussed possible boundary conditions for the velocity and for the pressure.  As discussed in Section \ref{sec:mainresults} following Assumption \ref{assump:main}, we require the source term $\Gamma_{\bm{v}}$ to have zero mean due to the no-flux boundary condition $\bm{v} \cdot \bm{n} = 0$ on $\pd \Omega$.  The general energy identity (with homogeneous Neumann boundary conditions for $\varphi$ and $\mu$) from \cite[Equ. (2.27)]{article:GarckeLamSitkaStyles} reads as
\begin{align*}
& \frac{\dd}{\dt} \int_{\Omega}  A \Psi(\varphi) + \frac{B}{2} \abs{\nabla \varphi}^{2} + \frac{D}{2} \abs{\sigma}^{2} + \chi \sigma (1-\varphi)  \dx \\
& \quad + \int_{\Omega} m(\varphi) \abs{\nabla \mu}^{2} + n(\varphi) \abs{\nabla (D \sigma + \chi (1-\varphi))}^{2} + \frac{1}{K} \abs{\bm{v}}^{2} \dx \\
& \quad =  \int_{\Omega} \Gamma_{\varphi} \mu - \mathcal{S} (D \sigma + \chi (1-\varphi)) + \Gamma_{\bm{v}} \lambda_{\bm{v}} \dx \\
& \quad + \int_{\pd \Omega} (D \sigma + \chi (1-\varphi)) n(\varphi) (D \pdnu  \sigma ) - (\bm{v} \cdot \bm{n}) \left ( \frac{D}{2} \abs{\sigma}^{2} + \chi \sigma (1-\varphi) + p \right ) \dH,
\end{align*}
and we see the appearance of an extra boundary source term involving the normal component of the velocity and the pressure.  Here it would be advantageous to use the rescaled pressure $\hat{p}$ and the Darcy law \eqref{Darcy:type1}, as the extra boundary source term will become
\begin{align*}
\int_{\pd \Omega} - (\bm{v} \cdot \bm{n}) \hat{p} \dH,
\end{align*}
which motivates the consideration of a Robin-type boundary condition for $\hat{p}$:
\begin{align*}
g & = a \hat{p} - \bm{v} \cdot \bm{n} = a \hat{p} + K \pdnu \hat{p} - K (D \sigma + \chi (1-\varphi)) \pdnu \sigma   \text{ on } \pd \Omega,
\end{align*}
for some given datum $g$ and positive constant $a$.  On one hand, this would allow us to consider source terms $\Gamma_{\bm{v}}$ that need not have zero mean, but on the other hand, the analysis of the Darcy system becomes more complicated.  In particular, the weak formulation of the pressure system now reads as
\begin{align*}
\int_{\Omega} K \nabla \hat{p} \cdot \nabla \zeta \dx + \int_{\pd \Omega} a \hat{p} \zeta \dH & = \int_{\Omega} \Gamma_{\bm{v}} \zeta + K  \left ( \mu \nabla \varphi + \left ( D \sigma + \chi (1-\varphi) \right ) \nabla \sigma \right )\cdot \nabla \zeta \dx \\
& + \int_{\pd \Omega} g \zeta \dH,
\end{align*}
and we observe that the term $D\sigma \nabla \sigma$ on the right-hand side belongs to $\bm{L}^{1}$ as $\sigma$ has at most $H^{1}$-spatial regularity from the energy identity.  Thus, it is not clear if the pressure system can be solved with the regularities stated in Lemma \ref{lem:pressure:est}.  A deeper study into the theory of linear elliptic equations with right-hand sides of the form $\div \bm{f}$ where $\bm{f} \in \bm{L}^{1}$ is required.

\bibliographystyle{plain}
\bibliography{CHNWellposed}

\end{document}